\numberwithin{equation}{section}
\newtheorem{thm}[equation]{Theorem}
\newtheorem*{thm*}{Theorem}
\newtheorem*{prop*}{Proposition}
\newtheorem*{cor*}{Corollary}
\newtheorem{prop}[equation]{Proposition}
\newtheorem{lem}[equation]{Lemma}
\newtheorem{cor}[equation]{Corollary}
\theoremstyle{remark}
\newtheorem{defn}[equation]{Definition}
\newtheorem*{defn*}{Definition}
\newtheorem*{setup*}{Setup}
\newtheorem{ex}[equation]{Example}
\newtheorem{cons}[equation]{Construction}
\newtheorem{rem}[equation]{Remark}
\newtheorem{conv}[equation]{Convention}
\newtheorem{rec}[equation]{Recollection}
\newtheorem{hyp}[equation]{Hypothesis}
\newtheorem*{hyp*}{Hypothesis}
\newtheorem*{Ack}{Acknowledgements}
\newtheorem*{warn}{Warning}
\newcommand{\nc}{\newcommand}
\nc{\dmo}{\DeclareMathOperator}
\nc{\sfA}{\mathsf{A}}
\nc{\sfB}{\mathsf{B}}
\nc{\sfC}{\mathsf{C}}
\nc{\sfD}{\mathsf{D}}
\nc{\sfE}{\mathsf{E}}
\nc{\sfF}{\mathsf{F}}
\nc{\sfG}{\mathsf{G}}
\nc{\sfH}{\mathsf{H}}
\nc{\sfI}{\mathsf{I}}
\nc{\sfJ}{\mathsf{J}}
\nc{\sfK}{\mathsf{K}}
\nc{\sfL}{\mathsf{L}}
\nc{\sfM}{\mathsf{M}}
\nc{\sfN}{\mathsf{N}}
\nc{\sfO}{\mathsf{O}}
\nc{\sfP}{\mathsf{P}}
\nc{\sfQ}{\mathsf{Q}}
\nc{\sfR}{\mathsf{R}}
\nc{\sfS}{\mathsf{S}}
\nc{\sfT}{\mathsf{T}}
\nc{\sfU}{\mathsf{U}}
\nc{\sfV}{\mathsf{V}}
\nc{\sfW}{\mathsf{W}}
\nc{\sfX}{\mathsf{X}}
\nc{\sfY}{\mathsf{Y}}
\nc{\sfZ}{\mathsf{Z}}
\nc{\scA}{\mathscr{A}}
\nc{\scB}{\mathscr{B}}
\nc{\scC}{\mathscr{C}}
\nc{\scD}{\mathscr{D}}
\nc{\scE}{\mathscr{E}}
\nc{\scF}{\mathscr{F}}
\nc{\scG}{\mathscr{G}}
\nc{\scH}{\mathscr{H}}
\nc{\scI}{\mathscr{I}}
\nc{\scJ}{\mathscr{J}}
\nc{\scK}{\mathscr{K}}
\nc{\scL}{\mathscr{L}}
\nc{\scM}{\mathscr{M}}
\nc{\scN}{\mathscr{N}}
\nc{\scO}{\mathscr{O}}
\nc{\scP}{\mathscr{P}}
\nc{\scQ}{\mathscr{Q}}
\nc{\scR}{\mathscr{R}}
\nc{\scS}{\mathscr{S}}
\nc{\scT}{\mathscr{T}}
\nc{\scU}{\mathscr{U}}
\nc{\scV}{\mathscr{V}}
\nc{\scW}{\mathscr{W}}
\nc{\scX}{\mathscr{X}}
\nc{\scY}{\mathscr{Y}}
\nc{\scZ}{\mathscr{Z}}
\nc{\mcA}{\mathcal{A}}
\nc{\mcB}{\mathcal{B}}
\nc{\mcC}{\mathcal{C}}
\nc{\mcD}{\mathcal{D}}
\nc{\mcE}{\mathcal{E}}
\nc{\mcF}{\mathcal{F}}
\nc{\mcG}{\mathcal{G}}
\nc{\mcH}{\mathcal{H}}
\nc{\mcI}{\mathcal{I}}
\nc{\mcJ}{\mathcal{J}}
\nc{\mcK}{\mathcal{K}}
\nc{\mcL}{\mathcal{L}}
\nc{\mcM}{\mathcal{M}}
\nc{\mcN}{\mathcal{N}}
\nc{\mcO}{\mathcal{O}}
\nc{\mcP}{\mathcal{P}}
\nc{\mcQ}{\mathcal{Q}}
\nc{\mcR}{\mathcal{R}}
\nc{\mcS}{\mathcal{S}}
\nc{\mcT}{\mathcal{T}}
\nc{\mcU}{\mathcal{U}}
\nc{\mcV}{\mathcal{V}}
\nc{\mcW}{\mathcal{W}}
\nc{\mcX}{\mathcal{X}}
\nc{\mcY}{\mathcal{Y}}
\nc{\mcZ}{\mathcal{Z}}
\nc{\mfp}{\mathfrak{p}}
\nc{\mfq}{\mathfrak{q}}
\nc{\mfm}{\mathfrak{m}}
\nc{\mfj}{\mathfrak{j}}
\nc{\rmh}{\mathrm{h}}
\nc{\rmfp}{\mathrm{fp}}
\nc{\rmc}{\mathrm{c}}
\nc{\rms}{\mathrm{s}}
\nc{\rma}{\mathrm{a}}
\nc{\rmb}{\mathrm{b}}
\nc{\bfA}{\mathbf{A}}
\nc{\bfB}{\mathbf{B}}
\nc{\bfC}{\mathbf{C}}
\nc{\bfD}{\mathbf{D}}
\nc{\bfE}{\mathbf{E}}
\nc{\bfF}{\mathbf{F}}
\nc{\bfG}{\mathbf{G}}
\nc{\bfH}{\mathbf{H}}
\nc{\bfI}{\mathbf{I}}
\nc{\bfJ}{\mathbf{J}}
\nc{\bfK}{\mathbf{K}}
\nc{\bfL}{\mathbf{L}}
\nc{\bfM}{\mathbf{M}}
\nc{\bfN}{\mathbf{N}}
\nc{\bfO}{\mathbf{O}}
\nc{\bfP}{\mathbf{P}}
\nc{\bfQ}{\mathbf{Q}}
\nc{\bfR}{\mathbf{R}}
\nc{\bfS}{\mathbf{S}}
\nc{\bfT}{\mathbf{T}}
\nc{\bfU}{\mathbf{U}}
\nc{\bfV}{\mathbf{V}}
\nc{\bfW}{\mathbf{W}}
\nc{\bfX}{\mathbf{X}}
\nc{\bfY}{\mathbf{Y}}
\nc{\bfZ}{\mathbf{Z}}
\nc{\gG}{\Gamma}
\nc{\gL}{\Lambda}
\nc{\gD}{\Delta}
\nc{\gS}{\Sigma}
\nc{\ga}{\alpha}
\nc{\gb}{\beta}
\nc{\g}{\gamma}
\nc{\gd}{\delta}
\nc{\e}{\epsilon}
\nc{\gz}{\zeta}
\nc{\gh}{\eta}
\nc{\gu}{\theta}
\nc{\gi}{\iota}
\nc{\gk}{\kappa}
\nc{\gl}{\lambda}
\nc{\gm}{\mu}
\nc{\gn}{\nu}
\nc{\gj}{\xi}
\nc{\gp}{\pi}
\nc{\gr}{\rho}
\nc{\gs}{\sigma}
\nc{\gt}{\tau}
\nc{\gf}{\phi}
\nc{\gx}{\chi}
\nc{\gc}{\psi}
\nc{\go}{\omega}
\nc{\wh}{\widehat}
\nc{\wt}{\widetilde}
\nc{\ol}{\overline}
\nc{\ul}{\underline}
\nc{\tl}{\tilde}
\nc{\ot}{\otimes}
\nc{\xr}{\xrightarrow}
\nc{\ie}{\sl i.e.,}
\dmo{\Loc}{\mathsf{Loc}^\ot}
\dmo{\loc}{\mathsf{loc}^\ot}
\dmo{\sloc}{\mathsf{loc}}
\nc{\lvlloc}[1]{\mathsf{loc}_{#1}^\ot}
\dmo{\Thick}{\mathsf{Thick}^\ot}
\dmo{\thick}{\mathsf{thick}^\ot}
\dmo{\Serre}{\mathsf{Serre}^\ot}
\dmo{\Thom}{\mathsf{Thom}}
\nc{\Modc}{\mathsf{Mod}(\scT^\rmc)}
\nc{\smodc}{\mathsf{mod}(\scT^\rmc)}
\nc{\tcop}{{\{\scT^\rmc\}}^{\mathrm{op}}}
\dmo{\Hocolim}{\mathsf{hocolim}}
\dmo{\cone}{\mathsf{cone}}
\dmo{\lvl}{\mathsf{level}}
\dmo{\Spec}{\mathrm{Spec}}
\dmo{\Spc}{\mathsf{Spc}(\scT^\rmc)}
\dmo{\Spcs}{\mathsf{Spc}^\rms(\scT)}
\nc{\Spcsa}[1]{\mathsf{Spc}^\rms(#1)}
\nc{\Spcsl}[1]{\mathsf{Spc}^\rms(\scT/#1)}
\dmo{\Spch}{\mathsf{Spc}^\rmh(\scT^\rmc)}
\dmo{\SPC}{\mathsf{SPC}(\scT)}
\dmo{\Supps}{\mathrm{Supp}^\rms}
\dmo{\Supph}{\mathrm{Supp}^\rmh}
\dmo{\SUPP}{\mathrm{SUPP}}
\dmo{\Supp}{\mathrm{Supp}}
\dmo{\supps}{\mathrm{supp}^\rms}
\dmo{\supph}{\mathrm{supp}^\rmh}
\dmo{\supp}{\mathrm{supp}}
\nc{\Ab}{\mathsf{Ab}}
\nc{\Sets}{\mathsf{Set}}
\nc{\CRing}{\mathsf{CRing}}
\nc{\Top}{\mathsf{Top}}
\nc{\Ring}{\mathsf{Ring}}
\dmo{\Mod}{\mathsf{Mod}}
\dmo{\smod}{\mathsf{mod}}
\dmo{\Modu}{\ul{\Mod}}
\dmo{\smodu}{\ul{\smod}}
\dmo{\Proj}{\mathsf{Proj}}
\dmo{\Flat}{\mathsf{Flat}}
\dmo{\Inj}{\mathsf{Inj}}
\dmo{\PInj}{\mathsf{PInj}}
\dmo{\Sch}{\mathsf{Sch}}
\dmo{\Cohs}{\mathsf{Coh}}
\dmo{\QCoh}{\mathsf{QCoh}}
\dmo{\Hom}{\mathsf{Hom}}
\dmo{\shom}{\mathsf{hom}}
\dmo{\Ext}{\mathsf{Ext}}
\dmo{\Tor}{\mathsf{Tor}}
\dmo{\End}{\mathsf{End}}
\dmo{\Aut}{\mathsf{Aut}}
\dmo{\Ob}{\mathsf{Ob}}
\dmo{\Mor}{\mathsf{Mor}}
\dmo{\Ph}{\mathsf{Ph}}
\dmo{\Ann}{Ann}
\dmo{\Ker}{\mathsf{Ker}}
\dmo{\coker}{\mathsf{Coker}}
\dmo{\im}{\mathsf{Im}}
\dmo{\colim}{\mathsf{colim}}
\dmo{\Id}{Id}
\dmo{\card}{\mathsf{card}}
\nc{\Char}[1]{{\color{ForestGreen}#1}}
\tikzset{rot270/.style={anchor=south, rotate=270, inner sep=1.0mm}}
\nc{\rcolon}{\nobreak \mskip 6muplus1mu\mathpunct {}\nonscript \mkern -\thinmuskip {:}\mskip 2mu\relax} 
\nc{\set}[2]{\big\{\, #1 \ \big| \ #2 \,\big\}} 
\nc{\paren}[2]{\! \big(\, #1 \ \big| \ #2 \big)} 
\nc{\parens}[2]{\! \big(\, #1 \ \big| \ #2 \,\big)} 
\nc{\qquadtext}[1]{\qquad\textrm{#1}\qquad} 
\nc{\customskip}{\vspace{5 pt plus 1pt minus 1pt}}
\patchcmd{\@setaddresses}{\indent}{\noindent}{}{}
\patchcmd{\@setaddresses}{\indent}{\noindent}{}{}
\patchcmd{\@setaddresses}{\indent}{\noindent}{}{}
\patchcmd{\@setaddresses}{\indent}{\noindent}{}{}
\begin{document}
\title{Stratification and the smashing spectrum}
\author{Charalampos Verasdanis}
\address{Charalampos Verasdanis \\ School of Mathematics and Statistics \\ University of Glasgow}
\email{c.verasdanis.1@research.gla.ac.uk}
\subjclass{18F99, 18G80}
\keywords{Stratification, smashing spectrum, telescope conjecture}

\begin{abstract}
We develop the theory of stratification for a rigidly-compactly generated tensor-triangulated category using the smashing spectrum and the small smashing support. Within the stratified context, we investigate connections between big prime ideals, objectwise-prime ideals and homological primes, and we show that the Telescope Conjecture holds if and only if the homological spectrum is $T_0$ and the homological support detects vanishing. We also reduce stratification to smashing localizations. Moreover, we study induced maps between smashing spectra and prove a descent theorem for stratification. Outside the stratified context, we prove that the Telescope Conjecture holds if and only if the smashing spectrum is $T_0$ with respect to the small topology.
\end{abstract}

\maketitle

\vskip-\baselineskip\vskip-\baselineskip

\tableofcontents

\vskip-\baselineskip\vskip-\baselineskip

\section{Introduction}
The theory of stratification emerged from the work of Neeman on derived categories of commutative noetherian rings~\cite{Neeman92}. The central theme is the classification of localizing tensor-ideals of a rigidly-compactly generated tensor-triangulated category (\emph{big tt-category}) via means of support theory.

A systematic development of the theory of stratification was undertaken by Barthel--Heard--Sanders~\cite{BarthelHeardSanders21a,BarthelHeardSanders21b}, based on work of Benson--Iyengar--Krause~\cite{BensonIyengarKrause08,BensonIyengarKrause11a,BensonIyengarKrause11b}, using the Balmer spectrum~\cite{Balmer05} and the Balmer--Favi support~\cite{BalmerFavi11}. Under the hypothesis that the Balmer spectrum is weakly noetherian, they proved various results that make stratification easier to verify in practice. Under a stronger topological assumption, generalizing work of Stevenson~\cite{Stevenson13}, they proved that stratification implies the Telescope Conjecture, i.e., every smashing ideal of the big tt-category involved is compactly generated.

Balchin--Stevenson~\cite{BalchinStevenson21}, building on the work of Krause \cite{Krause00,Krause05} and Balmer--Krause--Stevenson~\cite{BalmerKrauseStevenson20}, based on the hypothesis that the lattice of smashing ideals $\sfS(\scT)$ of a big tt-category $\scT$ is a spatial frame, studied the support theory stemming from the smashing spectrum $\Spcs$ --- the space associated with $\sfS(\scT)$ via Stone duality. Notably, there is a surjective continuous map $\psi \colon \Spcs \to \mathsf{Spc}(\scT^\rmc)^\vee$ from the smashing spectrum to the Hochster dual of the Balmer spectrum, which is a homeomorphism if and only if $\scT$ satisfies the Telescope Conjecture. The hope is that, in examples where the Telescope Conjecture fails, the smashing spectrum (with the accompanying notion of small support) can serve as a better tool than the Balmer spectrum in classifying localizing ideals. The small smashing support is constructed by assuming that the smashing spectrum is $T_D$ --- the dual notion of ``weakly noetherian''.

\begin{hyp*}
For the rest of the introduction and throughout the paper, we will assume that the frame $\sfS(\scT)$ of smashing ideals of a big tt-category $\scT$ is a spatial frame.
\end{hyp*}

The aim of this paper is to establish results concerned with stratification, using the smashing spectrum and the small smashing support (assuming, of course, that $\Spcs$ is $T_D$) in place of the Balmer spectrum and the Balmer--Favi support. To a large extent (specifically in Sections~\ref{sec:strat},~\ref{sec:local},~\ref{sec:comp}) our results are inspired by work of Barthel--Heard--Sanders~\cite{BarthelHeardSanders21a,BarthelHeardSanders21b}. In fact, under the presence of the Telescope Conjecture, the two stratification theories are equivalent and we recover many of their results. We expand on this point in detail in~\Cref{sec:comparison-BF}.

The motivating factor to our approach to stratification is that (under the assumption that $\Spc$ is generically noetherian) the Telescope Conjecture is a consequence, and thus a necessity, for $\scT$ to be stratified by the Balmer--Favi support. The proof cannot be reproduced and it is neither obvious nor is it expected that the Telescope Conjecture is a consequence of smashing stratification. In particular, it is still unclear whether the ``\emph{Universality Theorem}'' of~\cite{BarthelHeardSanders21a} can be applied. Note, however, that an example of a category that is stratified by the small smashing support and fails the Telescope Conjecture is yet to be found. One case under investigation is the derived category of a rank $1$ non-noetherian valuation domain.

What we have to offer outside of the stratified context is a reformulation of the Telescope Conjecture in terms of the \emph{small topology} on $\Spcs$; namely, the topology with basis of open subsets consisting of the smashing supports of compact objects of $\scT$. In~\Cref{prop:tc-iff-t0}, we prove that $\psi\colon \Spcs \to \mathsf{Spc}(\scT^\rmc)^\vee$ exhibits the Hochster dual of the Balmer spectrum as the Kolmogorov quotient of the smashing spectrum equipped with the small topology. Further, $\scT$ satisfies the Telescope Conjecture if and only if the smashing spectrum is $T_0$ with respect to the small topology.

In~\Cref{sec:strat}, we first prove~\Cref{prop:strat-class}, which asserts that $\scT$ is stratified by the small smashing support if and only if $\scT$ satisfies the local-to-global principle and minimality; see also~\cite[Theorem 4.1]{BarthelHeardSanders21a}. This result is crucial: we use it to deduce that if $\scT$ is stratified by the small smashing support, then a localizing ideal is a big prime if and only if it is objectwise-prime. Moreover, there is a bijective correspondence between the set of meet-prime smashing ideals and the set of big prime localizing ideals; see~\Cref{cor:obj-big-primes} and~\Cref{cor:spcs-objprimes}.

In~\Cref{sec:local}, we reduce stratification to smashing localizations and prove the following two results: First, in~\Cref{prop:minimality-everywhere}, we show that (provided that $\scT$ satisfies the local-to-global principle) $\scT$ is stratified by the small smashing support if and only if each meet-prime smashing localization of $\scT$ is stratified by the induced small smashing support; see also~\cite[Corollary 5.3]{BarthelHeardSanders21a}. We note that the local-to-global principle hypothesis cannot be dropped. Second, in~\Cref{cor:strat-closed-covers}, we show that if $\Spcs$ is covered by a family of finitely many closed subsets whose corresponding smashing localizations are stratified, then $\scT$ is stratified. If $\scT$ satisfies the local-to-global principle, then the finiteness condition can be dropped. Compare with~\cite[Corollary 5.5]{BarthelHeardSanders21a}.

In~\Cref{sec:comp}, under the hypothesis that $\scT$ is stratified by the small smashing support, we construct an injective comparison map $\xi \colon \Spch \to \Spcs$ from the homological spectrum~\cite{Balmer20a,Balmer20b} to the smashing spectrum and in~\Cref{prop:xi-surj-supph}, we prove that $\xi$ is bijective if and only if the homological support detects vanishing of objects (cf.~\cite[Proposition 3.14]{BarthelHeardSanders21b}). Moreover, in~\Cref{thm:tc} (assuming that $\scT$ is stratified by the small smashing support) we prove that $\scT$ satisfies the Telescope Conjecture if and only if the homological spectrum is $T_0$ and the homological support detects vanishing of objects.

Finally, in~\Cref{sec:image-of-SpcsF}, we study the image of the map $\mathsf{Spc}^\rms(F)\colon \mathsf{Spc}^\rms(\scU)\to \Spcs$ induced by a coproduct-preserving tensor-triangulated functor $F\colon \scT\to \scU$. Assuming that $\mathsf{Spc}^\rms(F)$ is a homeomorphism, and under a couple of generating conditions, we prove that if $\scU$ is stratified by the small smashing support, then $\scT$ is stratified by the small smashing support.

\begin{Ack}
The author would like to thank Greg Stevenson for valuable discussions and Paul Balmer and Sira Gratz for useful suggestions and comments and the anonymous referee whose comments led to considerable improvements. 
\end{Ack}

\section{Preliminaries}
\label{sec:prem}%

\subsection{Tensor-triangulated categories}
\begin{defn}
\label{defn:tt-cat}%
A \emph{tensor-triangulated category} (\emph{tt-category}) is a triple $(\scT,\ot,1)$ comprised of a triangulated category $\scT$ and a symmetric monoidal product
\[
-\ot-\colon \scT\times \scT \to \scT
\]
(here called \emph{tensor product}) with tensor-unit $1$, which is a triangulated functor in each variable. The subcategory of compact objects of $\scT$ is denoted by $\scT^\rmc$. A triangulated subcategory $\scX\subseteq \scT$ is called a \emph{tensor-triangulated subcategory} if $1\in \scX$ and $X\ot Y\in \scX,\ \forall X,Y\in \scX$.
\end{defn}

\begin{defn}
\label{defn:big-tt-cats}%
Let $\scT=(\scT,\ot,1)$ be a tensor-triangulated category with coproducts. Then $\scT$ is called \emph{rigidly-compactly generated}, henceforth a \emph{big tt-category}, if it satisfies the following conditions:
\begin{enumerate}[\rm(a)]
\item
\label{cond:cg}%
$\scT$ is compactly generated.
\item
\label{cond:compacts-tt-subcat}%
$\scT^\rmc$ is a tensor-triangulated subcategory of $\scT$.
\item
\label{cond:rigids-compacts}%
The rigid objects of $\scT$ coincide with the compact objects.
\item
\label{cond:tensor-coproduct-preserving}%
$-\ot- \colon \scT \times \scT \to\scT$ preserves coproducts in both variables.
\end{enumerate}
\end{defn}

Condition~\eqref{cond:rigids-compacts} deserves further elaboration. As a consequence of Brown representability, for every object $X\in \scT$, the functor $X\ot-$ has a right adjoint $[X,-]$. These right adjoints assemble into a bifunctor $[-,-]\colon \scT^{\mathrm{op}} \times \scT \to \scT$ called the \emph{internal-hom}. The \emph{dual} of an object $X\in \scT$, denoted by $X^\vee$, is $[X,1]$. Let $X,Y$ be two objects of $\scT$. Tracing the identity on $Y$ through the composite
\[
\scT(Y,Y)\cong \scT(1\ot Y,Y)\xr{(\e_{X,1}\ot Y)^\ast} \scT(X^\vee\ot X\ot Y,Y)\cong \scT(X^\vee \ot Y,[X,Y])
\]
where $\e_{X,1}\colon X^\vee \ot X\to 1$ is the counit of adjunction, gives rise to a natural \emph{evaluation map} $X^\vee \ot Y\to [X,Y]$. The object $X$ is called \emph{rigid} if this natural evaluation map is an isomorphism, for all $Y\in \scT$. It is in this sense that~\Cref{defn:big-tt-cats} demands that an object $X\in \scT$ is rigid if and only it is compact.

\begin{conv}
\label{conv:big-tt}%
From now on, $\scT$ will always denote a big tt-category. All subcategories are considered full and replete.
\end{conv}

\begin{defn}
\label{defn:various-ideals}%
A localizing subcategory $\scL\subseteq \scT$ is called a \emph{localizing tensor-ideal} if $X\ot Y\in \scL,\, \forall X\in \scT,\, \forall Y\in\scL$.  For simplicity, a tensor-ideal will be called an ideal. The collection of localizing ideals of $\scT$ (resp.~thick ideals of $\scT^\rmc$) will be denoted by $\Loc(\scT)$ (resp.~$\Thick(\scT^\rmc)$). For each object $X\in \scT$, the localizing subcategory (resp.~ideal) generated by $X$, meaning the smallest localizing subcategory (resp.~ideal) that contains $X$, is denoted by $\sloc(X)$ (resp.~$\loc(X)$). A non-zero localizing ideal $\scL$ is called \emph{minimal} if it contains no non-zero proper localizing ideals. A proper thick ideal $\mfp\subseteq \scT^\rmc$ is called a \emph{prime ideal} if for all $x,y\in \scT^\rmc\colon$ $x\ot y\in \mfp \Rightarrow x\in \mfp \text{ or } y\in \mfp$.
\end{defn}

\subsection{The Balmer spectrum}\label{subsec:spc}%
\begin{defn}[{\cite{Balmer05}}]
\label{defn:Balmer-spectrum}%
The \emph{Balmer spectrum} of $\scT$, denoted by $\Spc$, is the space of prime ideals of $\scT^\rmc$. The subsets $\supp(x)=\set{\mfp \in \Spc}{x\notin \mfp}$, where $x\in \scT^\rmc$, comprise a basis of closed subsets for a topology on $\Spc$. A subset $V\subseteq \Spc$ is called a \emph{Thomason subset} if $V$ is a union of closed subsets, each with quasi-compact complement. The collections of Thomason subsets and closed subsets of $\Spc$ are denoted by $\Thom(\Spc)$ and $\mcC(\Spc)$, respectively.
\end{defn}

\begin{lem}[{\cite[Lemma 2.6]{Balmer05}}]
\label{lem:supp-properties}%
The map
\[
\supp(-)\colon \Ob(\scT^\rmc) \to \mcC(\Spc), \ x\mapsto \supp(x)
\]
satisfies the following properties:
\begin{enumerate}[\rm(a)]
\item
\label{property:supp0-supp1}%
$\supp(0)=\varnothing \qquadtext{and} \supp(1)=\Spc$.
\item
\label{property:supp-coprod}%
$\supp(x\oplus y)=\supp(x) \oplus \supp(y)$.
\item
\label{property:supp-suspesion}%
$\supp(\gS x)=\supp(x)$.
\item
\label{property:supp-triangles}%
$\supp(y)\subseteq \supp(x)\cup \supp(z)$, for any triangle $x\to y\to z\to \gS x$.
\item
\label{property:supp-tpf}%
$\supp(x\ot y)=\supp(x)\cap \supp(y)$.
\end{enumerate}
\end{lem}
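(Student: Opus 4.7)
The plan is to unfold the definition $\supp(x) = \{\mfp \in \Spc : x \notin \mfp\}$ and verify each clause by testing membership in an arbitrary prime $\mfp$; the work then reduces to the defining closure properties of prime thick tensor-ideals. Each $\supp(x)$ is basic closed by the very construction of the topology on $\Spc$, so only the set-theoretic identities and inclusions are at stake.

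For (a), I would note that every thick subcategory contains the zero object, which gives $\supp(0) = \varnothing$, while any prime $\mfp$ is proper and an ideal, so $1 \in \mfp$ would force $\mfp = \scT^\rmc$; hence $\supp(1) = \Spc$. For (c), a prime is in particular a triangulated subcategory, so $x \in \mfp$ if and only if $\gS x \in \mfp$, yielding $\supp(\gS x) = \supp(x)$.

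For (b), a thick subcategory is closed under direct summands and biproducts, so $x \oplus y \in \mfp$ if and only if both $x \in \mfp$ and $y \in \mfp$; negating produces the union of supports. For (d), I would invoke the two-out-of-three property of thick subcategories applied to the triangle $x \to y \to z \to \gS x$: if $x \in \mfp$ and $z \in \mfp$ then $y \in \mfp$, which is precisely the contrapositive of the claimed inclusion $\supp(y) \subseteq \supp(x) \cup \supp(z)$.

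The most substantive step is (e), which I would split into two inclusions. The inclusion $\supp(x \ot y) \subseteq \supp(x) \cap \supp(y)$ uses only the tensor-ideal axiom: if either of $x$ or $y$ belongs to $\mfp$, then so does $x \ot y$. The reverse inclusion $\supp(x) \cap \supp(y) \subseteq \supp(x \ot y)$ is precisely the prime condition $x \ot y \in \mfp \Rightarrow x \in \mfp \text{ or } y \in \mfp$ written in contrapositive form. I anticipate no genuine obstacle in any step; the only subtlety is to confirm that each clause is in fact an equality rather than merely a one-sided inclusion, which reduces to checking that the defining properties of primes and of thick tensor-ideals actually supply the converse implications --- exactly what the axioms guarantee.
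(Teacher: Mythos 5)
Your proof is correct and is essentially the standard argument from the cited reference \cite[Lemma 2.6]{Balmer05}; the paper here records the lemma by citation rather than reproving it, and your definitional unfolding --- testing membership in an arbitrary prime $\mfp$ and invoking closure of thick subcategories under $0$, shifts, summands, triangles, and the ideal/prime conditions for tensoring --- is exactly the canonical route. One small point worth flagging: the statement of part (b) as printed reads $\supp(x\oplus y)=\supp(x)\oplus\supp(y)$, where $\oplus$ is evidently a typo for $\cup$; you correctly interpret it as the union of supports.
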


The Balmer spectrum parametrizes the thick ideals of $\scT^\rmc$, in the sense of the following classification theorem.
\begin{thm}[{\cite[Theorem 4.10]{Balmer05}}]
\label{thm:spc-classification}%
The map
\[
\Thom(\Spc) \to \Thick(\scT^\rmc),\ V\mapsto \set{x\in \scT^\rmc}{\supp(x)\subseteq V}
\]
is an inclusion-preserving bijection with inverse given by mapping a thick ideal $\scI$ to the Thomason subset $\bigcup_{x\in \scI}\supp(x)$. 
\end{thm}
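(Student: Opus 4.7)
The plan is to verify in sequence that both maps are well-defined, inclusion-preserving, and mutually inverse. Well-definedness is formal: for a Thomason set $V$ the subcategory $\Phi(V):=\{x\in\scT^\rmc:\supp(x)\subseteq V\}$ is thick by parts~(a)--(d) of Lemma~\ref{lem:supp-properties} and an ideal by~(e); while for a thick ideal $\scI$, the set $\Psi(\scI)=\bigcup_{y\in\scI}\supp(y)$ is a union of the basic closed subsets $\supp(y)$, each with quasi-compact complement $U(y):=\{\mfp:y\in\mfp\}$, hence Thomason. Monotonicity of both assignments is evident.

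For the composite $\Psi\Phi=\mathrm{id}$, the inclusion $\Psi(\Phi(V))\subseteq V$ holds by construction. For the reverse, I would use that every closed subset of $\Spc$ with quasi-compact complement has the form $\supp(y)$ for some $y\in\scT^\rmc$: its complement is a finite union $U(x_1)\cup\cdots\cup U(x_n)$ of basic quasi-compact opens, so the closed subset itself equals $\supp(x_1)\cap\cdots\cap\supp(x_n)=\supp(x_1\ot\cdots\ot x_n)$ by Lemma~\ref{lem:supp-properties}(e). Writing $V=\bigcup_i V_i$ with each $V_i=\supp(x_i)$, each $x_i$ lies in $\Phi(V)$, so $V_i\subseteq\Psi(\Phi(V))$.

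The substantive step is $\Phi\Psi=\mathrm{id}$; the inclusion $\scI\subseteq\Phi(\Psi(\scI))$ is immediate, and the reverse is the crux. Given $x\in\scT^\rmc$ with $\supp(x)\subseteq\bigcup_{y\in\scI}\supp(y)$, I would pass to the constructible (patch) topology on $\Spc$, under which each $\supp(z)$ becomes clopen and $\supp(x)$ is compact, and extract a finite subcover $\supp(x)\subseteq\supp(y_1)\cup\cdots\cup\supp(y_n)=\supp(y_1\oplus\cdots\oplus y_n)$ with $y_i\in\scI$. Setting $y:=y_1\oplus\cdots\oplus y_n\in\scI$, the task reduces to showing $\supp(x)\subseteq\supp(y)\Rightarrow x\in\thick(y)\subseteq\scI$. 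For this I would pass to the Verdier quotient $q\colon\scT^\rmc\to\scT^\rmc/\thick(y)$, whose Balmer spectrum identifies with $\Spc\setminus\supp(y)$; then $\supp(q(x))=\supp(x)\cap(\Spc\setminus\supp(y))=\varnothing$, so $q(x)=0$ by the classical support-detects-zero principle, whence $x\in\thick(y)\subseteq\scI$.

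The main obstacle is precisely the empty-support-implies-zero input. I would establish it by a Zorn's lemma argument: for $0\ne x$, take a maximal thick tensor-ideal $\mfp$ disjoint from all positive tensor powers of $x$, and verify that $\mfp$ is prime by noting that if $a\ot b\in\mfp$ with $a,b\notin\mfp$, maximality forces some $x^{\ot n}\in\thick(\mfp,a)$ and some $x^{\ot m}\in\thick(\mfp,b)$, after which a standard distributivity argument for products of thick ideals deposits $x^{\ot(n+m)}$ in $\mfp$, contradicting the choice of $\mfp$. The auxiliary identification $\Spc(\scT^\rmc/\thick(y))\cong\Spc\setminus\supp(y)$ follows from the universal property of the Verdier quotient, as primes of the quotient correspond to primes of $\scT^\rmc$ containing $y$.
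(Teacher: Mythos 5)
Your reconstruction is essentially the argument in Balmer's original paper, which the present text cites without re-proving; the reduction via patch-topology compactness and the passage to a Verdier quotient amount to a slight repackaging of Balmer's key lemma that $\supp(a)\subseteq\supp(b)$ forces $a$ to lie in the radical of $\thick(b)$. The well-definedness, monotonicity, and $\Psi\Phi=\Id$ steps are all fine, as is the identification of $\Spc(\scT^\rmc/\thick(y))$ with the primes containing $y$.

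There is, however, one genuine gap in your final paragraph. The Zorn argument producing a maximal thick $\ot$-ideal disjoint from $\{x^{\ot n}\}_{n\ge 1}$ goes through only when no tensor power of $x$ is zero: if $x$ is a nonzero $\ot$-nilpotent object, the collection of such ideals is empty and the argument collapses. In a general tt-category, empty support detects only $\ot$-nilpotence, not vanishing, which is precisely why Balmer's Theorem 4.10 is formulated for \emph{radical} thick ideals rather than all thick ideals. Your statement, with $\Thick(\scT^\rmc)$ in place of radical thick ideals, and your deduction that $q(x)=0$ from $\supp(q(x))=\varnothing$, are therefore correct only because $\scT^\rmc$ is rigid under the paper's standing hypotheses: rigidity forces every $\ot$-nilpotent object to vanish (so the Zorn set is nonempty) and every thick $\ot$-ideal to be radical, and rigidity is inherited by the quotient $\scT^\rmc/\thick(y)$ since it passes along essentially surjective monoidal functors. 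You should invoke rigidity explicitly at that point; as written, the argument would ``prove'' the false claim that every thick $\ot$-ideal of an arbitrary tt-category is radical.
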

 
\subsection{The homological spectrum}\label{subsec:spch}%
We briefly recall some standard facts about modules and the homological spectrum. The reader that wishes to learn more about the structure of the module category should consult~\cite[Appendix A]{BalmerKrauseStevenson20}, while information about the homological spectrum can be found in~\cite{Balmer20b}.

The category $\Modc$ of additive contravariant functors $\tcop \to \Ab$ is a locally coherent Grothendieck category with a right exact tensor product defined via Day convolution. Being a Grothendieck category, $\Modc$ also admits an internal-hom functor $[-,-]\colon \Modc^{\mathrm{op}}\times \Modc \to \Modc$. The subcategory of finitely presented objects of $\Modc$ is denoted by $\smodc$. The functor $h\colon \scT \to \Modc$ that sends an object $X\in \scT$ to $\wh{X}=\Hom_{\scT}(-,X)\rvert_{\scT^\rmc}$ is called the \emph{restricted Yoneda functor} and is conservative, homological, monoidal and preserves products and coproducts. A maximal Serre ideal of $\smodc$ is called a \emph{homological prime}.

\begin{cons}
\label{cons:weak-rings}%
For each homological prime $\scB$, there exists a unique maximal localizing Serre ideal $\scB'$ of $\Modc$ that contains $\scB$, constructed in the following way: Let $\loc(\scB)$ be the localizing Serre ideal of $\Modc$ generated by $\scB$. Then the Gabriel quotient $\Modc/\loc(\scB)$ remains a Grothendieck category and inherits the monoidal structure of $\Modc$ in such a way that the quotient functor $Q_\scB\colon \Modc \to \Modc/\loc(\scB)$ is monoidal. The injective envelope of the tensor-unit of $\Modc/\loc(\scB)$ is of the form $(Q_\scB \circ h)(I_\scB)$, for some pure-injective weak ring $I_\scB\in \scT$. Then $\scB'=\Ker[-,\wh{I_\scB}]$ is the unique maximal localizing Serre ideal of $\Modc$ that contains $\scB$.
\end{cons}

\begin{defn}
\label{defn:homological-spectrum}%
The \emph{homological spectrum} of $\scT$, denoted by $\Spch$, is the set of homological primes and the \emph{homological support} of an object $X\in \scT$ is the set
\[
\supph(X)=\set{\scB\in \Spch}{\wh{X}\notin \scB'}.
\]
\end{defn}

\begin{rem}
\label{rem:supph-internal-hom}%
Due to the fact that the restricted Yoneda functor is conservative, $\supph(X)=\set{\scB\in \Spch}{[X,I_\scB]\neq 0}$.
\end{rem}

\begin{lem}[{\cite[Section 4]{Balmer20b}}]
\label{lem:supph-properties}%
The map $\supph(-),\ X\mapsto \supph(X)$ satisfies the following properties:
\begin{enumerate}[\rm(a)]
\item
\label{property:supph0-supph1}%
$\supph(0)=\varnothing \qquadtext{and} \supph(1)=\Spch$.
\item
\label{property:supph-coprod}%
$\supph(\coprod_{i\in I}X_i)=\bigcup_{i\in I}\supph(X_i)$.
\item
\label{property:supph-suspesion}%
$\supph(\gS X)=\supph(X)$.
\item
\label{property:supph-triangles}%
$\supph(Y)\subseteq \supph(X)\cup \supph(Z)$, for any triangle $X\to Y\to Z\to \gS X$.
\item
\label{property:supph-tpf}%
$\supph(X\ot Y)=\supph(X)\cap \supph(Y)$.
\end{enumerate}
\end{lem}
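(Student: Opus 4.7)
My plan is to transport each assertion through the restricted Yoneda functor $h \colon \scT \to \Modc$ and verify it inside $\Modc$ using the description $\supph(X) = \{\scB \in \Spch \mid \wh{X} \notin \scB'\}$. The key tools are that $h$ is homological, monoidal, and preserves products and coproducts, together with the fact that each $\scB'$ is a proper localizing Serre tensor-ideal of $\Modc$ given by $\scB' = \Ker[-, \wh{I_\scB}]$.

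Parts (a)-(d) should then follow routinely. For (a), $\wh{0} = 0$ lies in every $\scB'$, while $\wh{1}$ is the tensor unit of $\Modc$ and cannot lie in any proper tensor-ideal, giving $\supph(1) = \Spch$. For (b), since $h$ preserves coproducts, $\wh{\coprod_i X_i} \cong \coprod_i \wh{X_i}$; a Serre subcategory is closed under coproducts of its members and under retracts of its members, so this coproduct lies in $\scB'$ if and only if every $\wh{X_i}$ does. For (c), the suspension on $\scT$ induces an auto-equivalence on $\Modc$ preserving $\scB'$, and $\wh{\gS X} \cong \gS \wh{X}$. For (d), the homological functor $h$ converts the triangle $X \to Y \to Z \to \gS X$ into a long exact sequence in $\Modc$; if $\wh{X}, \wh{Z} \in \scB'$ (and thus by (c) also $\wh{\gS^{-1}Z} \in \scB'$), the Serre property of $\scB'$ forces $\wh{Y} \in \scB'$ as well.

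The substantial content is (e). The inclusion $\supph(X \ot Y) \subseteq \supph(X) \cap \supph(Y)$ is immediate: monoidality of $h$ yields $\wh{X \ot Y} \cong \wh{X} \ot \wh{Y}$ under Day convolution, and since $\scB'$ is a tensor-ideal, $\wh{X} \in \scB'$ or $\wh{Y} \in \scB'$ forces the product into $\scB'$. The reverse inclusion is the main obstacle: it amounts to showing that $\scB'$ is \emph{tensor-prime}, i.e., $\wh{X} \ot \wh{Y} \in \scB'$ implies $\wh{X} \in \scB'$ or $\wh{Y} \in \scB'$. My plan here is to exploit the maximality of $\scB$ among Serre ideals of $\smodc$ together with the monoidality of the Gabriel quotient $Q_\scB \colon \Modc \to \Modc/\loc(\scB)$: in the quotient, the injective envelope of the unit is the indecomposable object $(Q_\scB \circ h)(I_\scB)$, and $\scB'$ is precisely the kernel of $[-, \wh{I_\scB}]$. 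The weak-ring structure on $I_\scB$ combined with the indecomposability of this injective envelope should upgrade the maximality of $\scB$ to the tensor-primeness of $\scB'$, closing the argument.
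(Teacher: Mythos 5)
The paper gives no proof of this lemma; it is cited wholesale from Balmer's ``Homological support of big objects'' (the bracketed reference in the theorem header is the citation, not an attribution of proof to this paper). So you should be compared against Balmer's argument in that reference, and against correctness.

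Parts (a)--(d) of your plan are fine: $\scB'$ is a localizing Serre ideal and $h$ is homological and coproduct-preserving, so the arguments you indicate go through (for (d), the long exact sequence plus closure of $\scB'$ under subobjects, quotients, and extensions does the job).

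Part (e) has a genuine gap, and it is exactly where the substance of the lemma lies. You assert ``monoidality of $h$ yields $\wh{X\ot Y}\cong \wh{X}\ot\wh{Y}$ under Day convolution.'' This is false for non-compact objects: the restricted Yoneda functor is only \emph{lax} monoidal, with a natural comparison map $\wh{X}\ot\wh{Y}\to\wh{X\ot Y}$ that is an isomorphism when one of $X,Y$ is compact but not in general (see \cite[Appendix A]{BalmerKrauseStevenson20}). As a result, even your ``immediate'' inclusion does not follow by the stated mechanism: knowing $\wh{X}\in\scB'$ gives $\wh{X}\ot\wh{Y}\in\scB'$ by ideality, but you need $\wh{X\ot Y}\in\scB'$, and the lax structure map points the wrong way. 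That direction should instead be run through the internal-hom description $\supph(X)=\{\scB\mid [X,I_\scB]\neq 0\}$ recorded in \Cref{rem:supph-internal-hom}: since $[X\ot Y,I_\scB]\cong[X,[Y,I_\scB]]$, vanishing of $[Y,I_\scB]$ forces vanishing of $[X\ot Y,I_\scB]$. For the reverse (hard) inclusion, your plan is a single sentence --- ``the weak-ring structure on $I_\scB$ combined with the indecomposability of this injective envelope should upgrade the maximality of $\scB$ to the tensor-primeness of $\scB'$'' --- which names the ingredients but does not constitute an argument. This is precisely the nontrivial content of Balmer's Section~4: he shows, using the weak-ring multiplication on $I_\scB$, that when $[Y,I_\scB]\neq 0$ the object $[Y,I_\scB]$ ``sees'' $I_\scB$ in a controlled way, so that $[X,[Y,I_\scB]]\neq 0$ whenever $[X,I_\scB]\neq 0$. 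Without carrying out that step, your proposal does not prove (e).
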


\begin{rem}[{\cite[Remark 3.4 \& Corollary 3.9]{Balmer20a}}]
\label{rem:comparison-phi}%
The map
\[
\phi\colon \Spch \to \Spc, \quad \scB \mapsto h^{-1}(\scB)\cap \scT^\rmc
\]
is surjective and $\phi^{-1}(\supp(x))=\supph(x),\ \forall x\in \scT^\rmc$. The last relation shows that when $\Spch$ is equipped with the topology with basis of closed subsets consisting of the homological supports of compact objects, $\phi$ is continuous.
\end{rem}

\section{The smashing spectrum}
\label{sec:spcs}%
In this section, we first recall some facts concerning smashing ideals, the smashing spectrum and the associated support theory. Under the hypothesis that the frame of smashing ideals is spatial, we prove that the Hochster dual of the Balmer spectrum is the Kolmogorov quotient of the smashing spectrum, when the latter is endowed with the topology with basis consisting of the smashing supports of compact objects. We conclude that the Telescope Conjecture holds if and only if the smashing spectrum is $T_0$ with respect to the aforementioned topology.

\subsection{Smashing ideals}
\label{subsec:smashing-ideals-support}%
\begin{defn}
\label{defn:smashing-ideals}%
A localizing ideal $\scS\subseteq \scT$ is called a \emph{smashing ideal} if the quotient functor $j_\scS\colon \scT\twoheadrightarrow \scT/\scS$ admits a right adjoint $k_\scS\colon \scT/\scS \hookrightarrow \scT$ that preserves coproducts. The collection of smashing ideals of $\scT$ is denoted by $\sfS(\scT)$.
\end{defn}

\begin{rem}
The collection $\sfS(\scT)$ is a set; see~\cite[Theorem 4.9]{Krause00} and~\cite[Remark 2.16]{BalmerFavi11}.
\end{rem}

\begin{rec}
\label{rem:idempotent-triangles}%
As explained in~\cite{BalmerFavi11}, every $\scS\in \sfS(\scT)$ corresponds to a triangle $(T_\scS)\colon e_\scS\to 1\to f_\scS$, where $e_\scS$ is a left idempotent, $f_\scS$ is a right idempotent and $e_\scS\ot f_\scS=0$ (with these three conditions being equivalent). Moreover, for any object $X\in \scT$, the associated localization triangle $i_\scS r_\scS(X)\to X\to k_\scS j_\scS(X)$, where $i_\scS\colon \scS \to \scT$ is the inclusion functor and $r_\scS\colon \scT \to \scS$ is its right adjoint, is isomorphic to $e_\scS\ot X\to X\to f_\scS\ot X$, i.e., the triangle obtained by tensoring $(T_\scS)$ with $X$. More succinctly, the localization and acyclization functors corresponding to $\scS$ are given by tensoring with $f_\scS$ and $e_\scS$, respectively.
\end{rec}

\begin{rec}
\label{rem:frame-of-smashing-ideals}%
The collection $\sfS(\scT)$ of smashing ideals, ordered by inclusion, is a \emph{complete lattice}~\cite{Krause05}. That is, every collection of smashing ideals has an infimum (\emph{meet}) and a supremum (\emph{join}). The join of a collection of smashing ideals $\{\scS_i\}_{i\in I}$ is $\bigvee_{i\in I}\scS_i\coloneqq\loc(\bigcup_{i\in I}\scS_i)$ and the meet of $\{\scS_i\}_{i\in I}$, denoted by $\bigwedge_{i\in I}\scS_i$, is the join of all smashing ideals contained in $\bigcap_{i\in I}\scS_i$. The meet of finitely many smashing ideals is their intersection. On the contrary, the meet of infinitely many smashing ideals is not necessarily their intersection; see~\cite[Remark~5.12]{BalmerKrauseStevenson20}. By~\cite[Theorem~5.5]{BalmerKrauseStevenson20}, $\sfS(\scT)$ is a \emph{frame}.~This means that finite meets distribute over arbitrary joins.
\end{rec}

\begin{rem}\label{rem:spatial-corrections}%
In a previous version of their work~\cite{BalchinStevenson21}, Balchin--Stevenson claimed that $\sfS(\scT)$ is a \emph{spatial frame}, i.e., $\sfS(\scT)$ is isomorphic to the lattice of open subsets of a topological space. Subsequently, it was discovered that there is a mistake in their proposed proof. Details and counterexamples to their arguments can be found in~\cite[Appendix A]{BalchinStevenson21}. Hence, the statement ``\emph{the frame of smashing ideals of a big tt-category is a spatial frame}'' is still an open problem. Nevertheless, even if counterexamples to this statement are found, assuming that $\sfS(\scT)$ is a spatial frame and building on this hypothesis is still fruitful since there are examples, notably derived categories of valuation domains~\cite{BazzoniStovicek17}, that satisfy it.
\end{rem}

\begin{hyp}\label{hyp:spatial}%
For the rest of this paper, we assume that the frame $\sfS(\scT)$ of smashing ideals of $\scT$ is a spatial frame.
\end{hyp}

\begin{defn}[\cite{BalchinStevenson21}]
\label{defn:smashing-spectrum}%
Under~\Cref{hyp:spatial}, the space corresponding to the spatial frame $\sfS(\scT)$ via Stone duality is called the \emph{smashing spectrum} of $\scT$ and is denoted by $\Spcs$. The smashing spectrum consists of the \emph{meet-prime} smashing ideals of~$\scT$. Namely, those smashing ideals $P$ that satisfy the following property: if $\scS_1,\scS_2$ are two smashing ideals such that $\scS_1\cap \scS_2\subseteq P$, then $\scS_1\subseteq P$ or $\scS_2\subseteq P$.
\end{defn}

\begin{rem}
For details on Stone duality, see~\cite{Johnstone82}. 
\end{rem}

\begin{rec}
\label{rem:spcs-opens}%
Under~\Cref{hyp:spatial}, the lattice isomorphism $\sfS(\scT)\to \mcO(\Spcs)$, given by Stone duality, maps a smashing ideal $\scS$ to the open subset $U_\scS=\set{P\in \Spcs}{\scS\nsubseteq P}$. Accordingly, the closed subsets of $\Spcs$, being the complements of open subsets, are of the form $V_\scS=\set{P\in \Spcs}{\scS\subseteq P}$. Since $\sfS(\scT)\to \mcO(\Spcs)$ is order-preserving, $\scS\subseteq \scR$ if and only if $U_\scS\subseteq U_\scR$. The union of a collection of open subsets $\{U_{\scS_i}\}_{i\in I}$ is given by $U_{\bigvee_{i\in I}\scS_i}$. Similarly, the intersection of two (or finitely many) open subsets $U_\scS$ and $U_\scR$ is $U_{\scS\cap \scR}$. Lastly, the closure of a point $P\in \Spcs$ is $V_P$.
\end{rec}

\begin{rec}
\label{rem:smashing-ideals-coherent}%
Under~\Cref{hyp:spatial}, since the spatial frame $\sfS(\scT)$ corresponds to $\Spcs$ via Stone duality, it follows that $\Spcs$ is a sober space, i.e., every non-empty irreducible closed subset of $\Spcs$ has a unique generic point. According to~\cite[Remark 3.2.12]{BalchinStevenson21}, $\Spcs$ most likely does not have a basis of quasi-compact open subsets.
\end{rec}

\begin{rem}
A space $X$ is called \emph{spectral} if $X$ sober, quasi-compact and $X$ has a basis of quasi-compact open subsets that is closed under finite intersections. As mentioned in~\Cref{rem:smashing-ideals-coherent}, by~\cite[Remark 3.2.12]{BalchinStevenson21}, it is likely that $\Spcs$ is not a spectral space (equivalently $\sfS(\scT)$ is not a coherent frame).
\end{rem}

\subsection{Smashing supports}
For the rest of this section, we tacitly assume that~\Cref{hyp:spatial} holds.
\label{subsec:Smashing-supports}
\begin{warn}
In the sequel, $\Supps$ will denote the big smashing support and $\supps$ will denote the small smashing support. The notation used in~\cite{BalchinStevenson21} is the opposite.
\end{warn}

\begin{defn}
\label{defn:Supps}%
For every object $X\in \scT$, the subset
\[
\Supps(X)=\set{P\in \Spcs}{X\notin P}
\]
of $\Spcs$ is called the \emph{big smashing support} of $X$.
\end{defn}

\begin{lem}[{\cite[Lemma 3.2.8]{BalchinStevenson21}}]
\label{lem:Supps-properties}%
The map
\[
\Supps(-)\colon \Ob(\scT)\to \scP(\Spcs),\ X\mapsto \Supps(X)
\]
satisfies the following properties:
\begin{enumerate}[\rm(a)]
\item
\label{property:Supp0-Supp1}%
$\Supps(0)=\varnothing \qquadtext{and} \Supps(1)=\Spcs$.
\item
\label{property:Supp-coprod}%
$\Supps(\coprod_{i\in I}X_i)=\bigcup_{i\in I}\Supps(X_i)$.
\item
\label{property:Supp-suspesion}%
$\Supps(\gS X)=\Supps(X)$.
\item
\label{property:Supp-triangles}%
$\Supps(Y)\subseteq \Supps(X)\cup \Supps(Z)$, for any triangle $X\to Y\to Z\to \gS X$.
\item
\label{property:Supp-weak-tpf}%
$\Supps(X\ot Y)\subseteq \Supps(X)\cap \Supps(Y)$.
\item
\label{property:Supp-compacts}%
$\Supps(x\ot y)=\Supps(x)\cap \Supps(y),\ \forall x,y\in \scT^\rmc$.
\end{enumerate}
\end{lem}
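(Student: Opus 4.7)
Properties (a)--(e) are formal consequences of each $P \in \Spcs$ being a proper localizing tensor-ideal of $\scT$. Indeed, $0 \in P$ while $1 \notin P$ since meet-prime elements of a frame are distinct from the top, giving (a); closure of $P$ under coproducts together with the fact that summands of an object of $P$ lie in $P$ (thickness) gives (b); (c) and (d) express thickness and two-out-of-three in triangles; and (e) holds because $P$ is a tensor-ideal, so $X \in P$ forces $X \ot Y \in P$ for any $Y \in \scT$.

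The only non-formal content is the reverse inclusion in (f). I would argue it by contrapositive: given $x,y \in \scT^{\rmc}$ and $P \in \Spcs$ with $x \ot y \in P$, show $x \in P$ or $y \in P$. Since $x$ and $y$ are compact, $\loc(x)$ and $\loc(y)$ are compactly generated and hence smashing (Miller--Neeman). The plan is to apply the meet-primality of $P$ to these smashing ideals via the identity
\begin{equation*}
\loc(x) \cap \loc(y) = \loc(x \ot y).
\end{equation*}
Granting this, $x \ot y \in P$ forces $\loc(x) \cap \loc(y) \subseteq P$; by~\Cref{rem:frame-of-smashing-ideals} the intersection of two smashing ideals is their finite meet in $\sfS(\scT)$, so meet-primality of $P$ yields $\loc(x) \subseteq P$ or $\loc(y) \subseteq P$, i.e.\ $x \in P$ or $y \in P$.

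For the displayed identity, the inclusion $\supseteq$ is immediate since $\loc(x)$ and $\loc(y)$ are tensor-ideals containing $x \ot y$. For $\subseteq$, I would combine Balmer's identity $\thick(x) \cap \thick(y) = \thick(x \ot y)$, extracted from~\Cref{lem:supp-properties}(\ref{property:supp-tpf}) via~\Cref{thm:spc-classification}, with the homotopy-colimit description of the acyclization idempotents $e_x$ and $e_y$ of $\loc(x)$ and $\loc(y)$ as filtered hocolims of objects in $\thick(x)$ and $\thick(y)$ respectively. Any tensor product of such objects lies in $\thick(x \ot y)$ and is therefore annihilated by the localization idempotent $f_{x \ot y}$ of $\loc(x \ot y)$; commuting $f_{x \ot y} \ot -$ past the homotopy colimits yields $f_{x \ot y} \ot e_x \ot e_y = 0$. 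Any $Z \in \loc(x) \cap \loc(y)$ satisfies $Z \cong e_x \ot e_y \ot Z$, so $f_{x \ot y} \ot Z = 0$, i.e.\ $Z \in \loc(x \ot y)$.

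The main obstacle is precisely this identity: without the Telescope Conjecture, an intersection of compactly generated smashing ideals need not itself be compactly generated, so one cannot transport Balmer's identity on compacts to the localizing level through the Miller--Neeman correspondence alone. The idempotent-level calculation above circumvents this subtlety by working with explicit generators inside the thick ideals of $\scT^{\rmc}$.
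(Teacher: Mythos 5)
The paper does not prove this lemma itself --- it defers to~\cite[Lemma 3.2.8]{BalchinStevenson21} --- so there is no in-paper argument to compare against; I will assess the proposal on its own terms.

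Parts (a)--(e) are correct and are indeed formal consequences of each $P\in\Spcs$ being a proper localizing tensor-ideal. For (f), your overall strategy is right: reduce to the identity $\loc(x)\cap\loc(y)=\loc(x\ot y)$ for compact $x,y$, note these are smashing, and then apply meet-primality of $P$ together with the fact (\Cref{rem:frame-of-smashing-ideals}) that the finite meet in $\sfS(\scT)$ is the intersection. Note also that one cannot shortcut this via the frame isomorphism $\sfS(\scT)\cong\mcO(\Spcs)$, since under that map the identity $\loc(x)\cap\loc(y)=\loc(x\ot y)$ is literally the statement $\Supps(x)\cap\Supps(y)=\Supps(x\ot y)$ being proved; your choice to argue the localizing-category identity independently is the correct move.

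The gap lies in how you prove that identity. You assert that $e_x$ is a filtered homotopy colimit of objects of $\thick(x)$. This is not a generally available fact: in a bare triangulated category only sequential homotopy colimits are defined, and the standard cellular construction of $e_x$ produces a tower whose terms are (infinite) coproducts of shifts of $x$ --- objects of $\loc(x)$, not of $\thick(x)$. Without that description the computation $f_{x\ot y}\ot e_x\ot e_y=0$ does not follow as written. The desired containment $e_x\ot e_y\in\loc(x\ot y)$ is, however, available by the elementary absorption argument the paper records as~\Cref{lem:image-of-ideal}: the class $\set{Z\in\scT}{x\ot Z\in\loc(x\ot y)}$ is a localizing tensor-ideal containing $y$, hence contains $e_y$, so $x\ot e_y\in\loc(x\ot y)$; then $\set{Z\in\scT}{Z\ot e_y\in\loc(x\ot y)}$ is a localizing tensor-ideal containing $x$, hence contains $e_x$, so $e_x\ot e_y\in\loc(x\ot y)$. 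Now any $Z\in\loc(x)\cap\loc(y)$ satisfies $Z\cong e_x\ot e_y\ot Z\in\loc(x\ot y)$, completing the inclusion. Alternatively one may simply invoke the Balmer--Favi identity $e_{V_1}\ot e_{V_2}\cong e_{V_1\cap V_2}$ for Thomason subsets, applied to $V_1=\supp(x)$, $V_2=\supp(y)$, which gives $e_x\ot e_y\cong e_{x\ot y}$ directly. Either repair keeps the remainder of your argument intact.
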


If $x\in \scT^\rmc$, then $\supp(x)$ is a closed subset of $\Spc$. Analogously, if $X\in \scT$ and $\loc(X)\in \sfS(\scT)$, then $\Supps(X)$ is an open subset of $\Spcs$. In particular, if $x\in \scT^\rmc$, then $\loc(x)\in \sfS(\scT)$. Thus, $\Supps(x)$ is an open subset of $\Spcs$.

\begin{lem}\label{lem:support-of-compacts-is-open}
Let $X\in \scT$ and $\scR=\bigwedge\set{\scS\in \sfS(\scT)}{X\in \scS}$. Then $U_\scR \subseteq \Supps(X)$, with equality when $\loc(X)$ is a smashing ideal.
\end{lem}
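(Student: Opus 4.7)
The plan is to unpack both sides and exploit the universal property of the meet $\scR$. Unfolding the definitions, $U_\scR \subseteq \Supps(X)$ is equivalent, by contraposition, to the implication: if $X \in P$ for some $P \in \Spcs$, then $\scR \subseteq P$. But this is immediate from the definition of $\scR$ as the infimum in $\sfS(\scT)$: since $P$ itself is a smashing ideal containing $X$, we have $P \in \set{\scS\in \sfS(\scT)}{X\in \scS}$, so $P$ bounds $\scR$ from above and hence $\scR \subseteq P$.

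For the reverse inclusion under the hypothesis that $\loc(X) \in \sfS(\scT)$, I would first argue that $\scR = \loc(X)$. Indeed, $\loc(X)$ lies in the family $\set{\scS\in \sfS(\scT)}{X\in \scS}$, and any other member of this family contains $X$ hence contains the localizing ideal $\loc(X)$ generated by $X$. So $\loc(X)$ is the minimum of the family and therefore coincides with the meet $\scR$. Given this identification, any $P \in \Spcs$ with $X \notin P$ satisfies $\loc(X) \nsubseteq P$ (because $X \in \loc(X)$), hence $\scR \nsubseteq P$, which means $P \in U_\scR$.

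I do not expect a substantial obstacle here: the statement is essentially a definitional unravelling, with the only subtlety being the observation that when $\loc(X)$ happens to be smashing it serves as the minimum (not merely the infimum) of the family defining $\scR$. The notation might be mildly confusing because $U_\scR$ is described via $\scR \nsubseteq P$ while $\Supps(X)$ is described via $X \notin P$, so I would take care to formulate each step via contrapositives to keep the logical direction transparent.
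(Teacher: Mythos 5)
Your proof is correct and takes essentially the same route as the paper; the one difference is that you invoke the abstract lattice fact that a meet is a lower bound of the defining family (so $\scR \subseteq P$ whenever $P$ is a smashing ideal containing $X$), whereas the paper unpacks the explicit construction of $\scR$ as the join of smashing ideals contained in the intersection $\scI$ and then uses $U_\scR = \bigcup\set{U_\scL}{\scL\in\sfS(\scT),\ \scL\subseteq\scI}$. Both are short, valid, and rest on the same observation that $P\in\Spcs$ with $X\in P$ bounds the meet from above, and your handling of the case $\loc(X)\in\sfS(\scT)$ via the minimum of the family also matches the paper's.
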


\begin{proof}
Let $\scI=\bigcap\set{\scS\in \sfS(\scT)}{X\in \scS}$. Then $U_\scR=\bigcup \set{U_\scL}{\scL\in \sfS(\scT),\ \scL\subseteq \scI}$. Let $\scL$ be a smashing ideal such that $\scL\subseteq \scI$ and assume that $P\in U_\scL$, in other words $\scL\nsubseteq P$. If $X\in P$, then $\scI\subseteq P$ (since $P$ is a smashing ideal that contains $X$). As a result, $\scL\subseteq P$, which has been ruled out by assumption. So, $X\notin P$, meaning that $P\in \Supps(X)$. If $\loc(X)$ is a smashing ideal, then $\loc(X)=\scR$. Therefore, $U_\scR=U_{\loc(X)}=\Supps(X)$.
\end{proof}

Next we discuss a refinement of the big smashing support, designed to handle non-compact objects more effectively; see~\cite[Section 3.3]{BalchinStevenson21}. This refined support, called the \emph{small smashing support}, is constructed in a way similar to the Balmer--Favi support~\cite{BalmerFavi11}. In the case of the derived category of a commutative noetherian ring it recovers Foxby's small support~\cite{Foxby79}.

\begin{defn}\label{defn:locally-closed}%
Let $X$ be a topological space. A point $x\in X$ is called \emph{locally closed} if there exist an open subset $U\subseteq X$ and a closed subset $V\subseteq X$ such that $\{x\}=U\cap V$. If every point of $X$ is locally closed, then $X$ is called $T_D$.
\end{defn}

\begin{rem}
Since the $T_D$ separation axiom does not appear to be as popular as the rest of the separation axioms, e.g., $T_0$, $T_1$, $T_2$, it might be useful to provide some explanations. To this end, let $X$ be a topological space. The \emph{specialization preorder} on the points of $X$ is defined as follows: $x\leq y$ if $x\in \ol{\{y\}}$; equivalently, $\ol{\{x\}} \subseteq \ol{\{y\}}$. The \emph{downward closure} of a point $x\in X$ is $\downarrow \! x=\set{z\in X}{z\leq x}$. Evidently, $\downarrow \! x=\ol{\{x\}}$. The space $X$ is $T_D$ if $\downarrow \! x \setminus \{x\}$ is a closed subset, for every $x\in X$. One can easily check that this definition is equivalent to the one given in~\Cref{defn:locally-closed}. Any $T_1$ space is $T_D$ and any $T_D$ space is $T_0$. The original source where the $T_D$ separation axiom was studied is~\cite{AullThron62}.
\end{rem}

\begin{rem}
By~\cite[Lemma 3.3.10]{BalchinStevenson21}, if $\Spcs$ is $T_D$, then for every point $P\in \Spcs$, there exists a smashing ideal $\scS$ such that $\{P\}=U_{\scS}\cap V_P$.
\end{rem}

\begin{defn}
\label{defn:Rickard-idempotent}%
Suppose that $\Spcs$ is $T_D$ and let $P$ be a point of $\Spcs$ and $\scS$ a smashing ideal of $\scT$ such that $\{P\}=U_\scS\cap V_P$. Then the object $\gG_P=e_\scS\ot f_P$ is called the \emph{Rickard idempotent} corresponding to $P$.
\end{defn}

\begin{rec}
\label{rem:smaller-neighborhoods-Rickards}%
Let $P\in \Spcs$ be a locally closed point and consider an open subset $U_\scS$ such that $U_\scS\cap V_P=\{P\}$. Since $\scS\nsubseteq P$, it follows that $\gG_P=e_\scS \ot f_P\neq 0$. If $U_\scR$ is another open subset that contains $P$, then $U_{\scS\cap \scR}\cap V_P=\{P\}$. It then holds that $e_{\scS}\ot f_P$ and $e_{\scS\cap \scR}\ot f_P=e_{\scS}\ot e_{\scR}\ot f_P$ are isomorphic. Therefore, restricting to smaller open neighborhoods of $P$ does not alter the Rickard idempotent $\gG_P$. More generally, if $U_{\scS_1}\cap V_{P_1}=U_{\scS_2}\cap V_{P_2}$, then $e_{\scS_1}\ot f_{P_1}\cong e_{\scS_2}\ot f_{P_2}$. This shows that $\gG_P$ does not depend on the choice of open and closed subsets whose intersection is~$P$. See~\cite[Lemma~3.3.9]{BalchinStevenson21} for details.
\end{rec}

\begin{defn}
\label{defn:supps}%
Assuming that $\Spcs$ is $T_D$, the \emph{small smashing support} of an object $X\in \scT$ is
\[
\supps(X)=\set{P\in \Spcs}{\gG_P\ot X\neq 0}.
\]
\end{defn}

\begin{lem}
\label{lem:supps-properties}%
The analogous properties~\eqref{property:Supp0-Supp1}-\eqref{property:Supp-compacts} of~\Cref{lem:Supps-properties} hold for the small smashing support. If $\scS$ is a smashing ideal, then $U_\scS=\supps(e_\scS)$, $V_\scS=\supps(f_\scS)$. Further, for all $X\in \scT$, $\supps(X)\subseteq \Supps(X)$, with equality when $X\in \scT^\rmc$.
\end{lem}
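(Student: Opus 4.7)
My plan is to tackle the four assertions in order, exploiting two basic facts: $\gG_P \ot -$ is a coproduct-preserving triangulated endofunctor, and $\gG_P$ is tensor-idempotent in the sense that $\gG_P \ot \gG_P \cong \gG_P$. The latter follows from $e_\scS \ot e_\scS \cong e_\scS$ (left idempotent) and $f_P \ot f_P \cong f_P$ (right idempotent), where $\gG_P = e_\scS \ot f_P$ for any $\scS$ with $\{P\} = U_\scS \cap V_P$.

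Properties (a)--(d) are immediate from the preservation of zero, coproducts, suspensions, and triangles. For the weak tensor-product formula (e), the idempotency rewrites $\gG_P \ot (X \ot Y) \cong (\gG_P \ot X) \ot (\gG_P \ot Y)$, so non-vanishing of the left-hand side forces both factors to be non-zero; this yields $\supps(X \ot Y) \subseteq \supps(X) \cap \supps(Y)$. Property (f) on compacts is deferred and will drop out of the last step.

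For $U_\scS = \supps(e_\scS)$, suppose first that $P \in U_\scS$ and fix $\scR$ with $\{P\} = U_\scR \cap V_P$, available since $\Spcs$ is $T_D$. Because $P$ is meet-prime and lies in both $U_\scS$ and $U_\scR$, we also have $\{P\} = U_{\scS \cap \scR} \cap V_P$, so by~\Cref{rem:smaller-neighborhoods-Rickards} we may take $\gG_P \cong e_{\scS \cap \scR} \ot f_P$. Since $e_{\scS \cap \scR} \in \scS \cap \scR \subseteq \scS$, tensoring with $e_\scS$ fixes it, giving $e_\scS \ot \gG_P \cong \gG_P$, which is non-zero by local-closedness of $P$. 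Conversely, if $\scS \subseteq P$ then $e_\scS \in P$, hence $f_P \ot e_\scS = 0$ and therefore $\gG_P \ot e_\scS = 0$. The equality $V_\scS = \supps(f_\scS)$ is then obtained by tensoring the idempotent triangle $e_\scS \to 1 \to f_\scS$ with $\gG_P$: on $V_\scS$ the left term vanishes, forcing $\gG_P \ot f_\scS \cong \gG_P \neq 0$, while on $U_\scS$ we have $\gG_P \ot e_\scS \cong \gG_P$ so $\gG_P \ot f_\scS \cong \gG_P \ot e_\scS \ot f_\scS = 0$.

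The containment $\supps(X) \subseteq \Supps(X)$ is immediate: $X \in P$ gives $f_P \ot X = 0$, hence $\gG_P \ot X = 0$. For equality when $x \in \scT^\rmc$, I would apply~\Cref{lem:support-of-compacts-is-open} to the smashing ideal $\loc(x)$ to get $\Supps(x) = U_{\loc(x)}$, and the preceding step then identifies this with $\supps(e_{\loc(x)})$. Using properties (a)--(e), the class $\{Y \in \scT : \supps(Y) \subseteq \supps(x)\}$ is a localizing ideal containing $x$, hence all of $\loc(x)$, and in particular $e_{\loc(x)}$, whence $\supps(e_{\loc(x)}) \subseteq \supps(x)$. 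Combined with $\supps(x) \subseteq \Supps(x) = \supps(e_{\loc(x)})$ this gives equality, and property (f) for $\supps$ on compacts then follows from its counterpart for $\Supps$. The most delicate ingredient is the identification $U_\scS = \supps(e_\scS)$: one must use the meet-prime property of $P$ together with~\Cref{rem:smaller-neighborhoods-Rickards} to control Rickard idempotents, and I would budget the bulk of my attention on that step.
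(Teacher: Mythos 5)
Your proof is correct and, in fact, more self-contained than what appears in the paper. The paper's own proof is brief: it observes that the analogues of properties~\eqref{property:Supp0-Supp1}--\eqref{property:Supp-compacts} follow directly from the definition of $\supps$, and then defers the remaining three assertions (the identifications $U_\scS = \supps(e_\scS)$, $V_\scS = \supps(f_\scS)$, and the containment/equality with $\Supps$) to Balchin--Stevenson [Lemmas 3.3.11 \& 3.3.15]. You reproduce those cited arguments from scratch, and your reasoning is sound throughout.

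A few remarks on the finer points. In establishing $U_\scS \subseteq \supps(e_\scS)$, the key move is passing from $\gG_P = e_\scR \ot f_P$ to $\gG_P \cong e_{\scS\cap\scR}\ot f_P$ via~\Cref{rem:smaller-neighborhoods-Rickards}; you correctly note that $e_{\scS\cap\scR}\in \scS$ forces $e_\scS\ot e_{\scS\cap\scR}\cong e_{\scS\cap\scR}$, which is exactly what is needed. (Incidentally, the invocation of meet-primeness to get $P\in U_{\scS\cap\scR}$ is sound but not strictly needed here, since $P\in U_\scS\cap U_\scR = U_{\scS\cap\scR}$ by the order isomorphism of~\Cref{rem:spcs-opens}.) The deduction of $V_\scS=\supps(f_\scS)$ by tensoring the idempotent triangle with $\gG_P$ and examining the two cases is clean. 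For the compact case, using~\Cref{lem:support-of-compacts-is-open} to write $\Supps(x)=U_{\loc(x)}=\supps(e_{\loc(x)})$ and then showing $\supps(e_{\loc(x)})\subseteq \supps(x)$ via the localizing-ideal argument is a nice way to close the loop, and correctly yields property~\eqref{property:Supp-compacts} for $\supps$ as a corollary of the one for $\Supps$. This is a valid and instructive expansion of the paper's terse proof.
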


\begin{proof}
The claimed properties follow from the definition of the small smashing support. For the rest, see~\cite[Lemma 3.3.11 \& Lemma 3.3.15]{BalchinStevenson21}.
\end{proof}

Recall that the \emph{Hochster dual} of a topological space $X$ is the space $X^\vee$ with open subsets the Thomason subsets of $X$.

\begin{rec}
\label{rem:comparison-of-spcs-with-spc}%
The map $\psi \colon \Spcs \to \mathsf{Spc}(\scT^\rmc)^\vee$ that sends a meet-prime smashing ideal $P$ to $P\cap \scT^\rmc$ is surjective and continuous. Furthermore, the Telescope Conjecture for smashing ideals (in the sense of~\Cref{defn:telescope}) holds if and only if $\psi$ is a homeomorphism; see~\cite[Section~5]{BalchinStevenson21}.
\end{rec}

\subsection{The small topology}\label{subsec:the-small-topology}%

The collection $\set{\Supps(x)\subseteq \Spcs}{x\in \scT^\rmc}$ is a basis for a topology on $\Spcs$, which we denote $\sfT$ and call the \emph{small topology}. The small topology is coarser than the topology provided by Stone duality (henceforth called the \emph{standard topology}).

\begin{rem}
The comparison map $\psi\colon \Spcs \to \mathsf{Spc}(\scT^\rmc)^\vee$ remains continuous when $\Spcs$ is equipped with the small topology, since for all $x\in \scT^\rmc$, $\psi^{-1}(\supp(x))=\Supps(x)$, which is open in the small topology.
\end{rem}
\begin{lem}
\label{lem:coarse-closure-of-point}%
Let $P$ be a point of $\Spcs$. The closure of $P$ in the small topology is given by $\ol{\{P\}}^\sfT=V_{\loc(P^\rmc)}$.
\end{lem}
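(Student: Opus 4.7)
The plan is to unpack both sides directly and show they coincide pointwise. The small topology has subbasis/basis $\{\Supps(x) : x \in \scT^\rmc\}$, so the $\sfT$-closure of $P$ is characterized by: $Q \in \ol{\{P\}}^\sfT$ if and only if every basic open $\Supps(x)$ containing $Q$ also contains $P$. Since $R \in \Supps(x)$ translates to $x \notin R$ (by the definition of $\Supps$, remembering that for compact $x$ this agrees with $\Supp$ of~\Cref{defn:Supps}), the condition becomes: for all $x \in \scT^\rmc$, $x \notin Q \Rightarrow x \notin P$. Taking contrapositives, this is just $P \cap \scT^\rmc \subseteq Q \cap \scT^\rmc$, i.e., $P^\rmc \subseteq Q$.

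Thus I would first establish the intermediate equality
\[
\ol{\{P\}}^\sfT = \set{Q \in \Spcs}{P^\rmc \subseteq Q}.
\]
Then the second step is to replace $P^\rmc$ by $\loc(P^\rmc)$ on the right. Since $\loc(P^\rmc)$ is the smallest localizing subcategory (in particular, the smallest localizing ideal, as $P^\rmc$ is a thick tensor-ideal of $\scT^\rmc$) containing $P^\rmc$, and since every $Q \in \Spcs$ is itself a (smashing, hence) localizing ideal, one has $P^\rmc \subseteq Q \iff \loc(P^\rmc) \subseteq Q$. The right-hand side is precisely $Q \in V_{\loc(P^\rmc)}$, using the notation of~\Cref{rem:spcs-opens} extended in the obvious way to localizing ideals.

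Combining the two steps yields $\ol{\{P\}}^\sfT = V_{\loc(P^\rmc)}$. There is no serious obstacle: the argument is essentially a two-line unwinding of definitions, with the only subtlety being to note that although $V_{(-)}$ was introduced for smashing ideals, the defining formula makes sense for any subcategory, and the equivalence $P^\rmc \subseteq Q \iff \loc(P^\rmc) \subseteq Q$ legitimately brings us back to a localizing-ideal statement.
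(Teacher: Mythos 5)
Your proof is correct, and it takes a genuinely different route from the paper's. The paper works entirely in the frame: it writes the closure as $\bigcap_{P\in V_{\loc(x)}}V_{\loc(x)}$, then uses Stone duality to collapse that intersection of basic closed sets into a single $V_{\bigvee\loc(x)}$, and finally identifies the join as $\loc(P^\rmc)$. You instead argue pointwise: fix $Q$, characterize $Q\in\ol{\{P\}}^\sfT$ via basic opens, and reduce to the condition $P^\rmc\subseteq Q$, which you upgrade to $\loc(P^\rmc)\subseteq Q$ by the universal property of $\loc(-)$. Your argument is the more elementary of the two, since it never needs the frame axioms or the explicit description of joins in $\sfS(\scT)$; the paper's argument is a bit slicker if one is already comfortable manipulating joins of smashing ideals. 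One small remark: your hedge about ``extending the notation $V_{(-)}$ to localizing ideals'' is unnecessary --- $\loc(P^\rmc)$ is compactly generated, hence smashing by~\cite{Miller92}, so $V_{\loc(P^\rmc)}$ is a closed subset of $\Spcs$ in the standard sense of~\Cref{rem:spcs-opens}, and indeed one of the basic closed subsets of the small topology.
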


\begin{proof}
The basic closed subsets of $\sfT$ are those of the form $V_{\loc(x)}$, where $x\in \scT^\rmc$. Thus,
\begin{align*}
\ol{\{P\}}^\sfT&=\bigcap_{P\in V_{\loc(x)}}V_{\loc(x)}\\
&=V_{\bigvee \! \big(\! \loc(x) \, \big| \, \loc(x)\subseteq P \big)}\\
&=V_{\bigvee \! \big(\! \loc(x) \, \big| \, x\in P^\rmc \big)}\\
&=V_{\loc \! \big(\! \bigcup_{x\in P^\rmc}\loc(x) \big)}\\
&=V_{\loc(P^\rmc)}.\qedhere
\end{align*}
\end{proof}

\begin{rem}
The \emph{Kolmogorov quotient} $\sfK\sfQ(X)$ of a topological space $X$ is the quotient of $X$ with respect to the equivalence relation that identifies two points $x,y\in X$ if $\ol{\{x\}}=\ol{\{y\}}$. The space $\sfK\sfQ(X)$ is $T_0$ and the quotient map $X \to \sfK\sfQ(X)$ is a surjective continuous map that is an initial object in the category of continuous maps out of $X$ into $T_0$ spaces. The space $X$ is $T_0$ if and only if the quotient map $X\to \sfK\sfQ(X)$ is a homeomorphism.
\end{rem}

\begin{defn}\label{defn:telescope}%
Let $\scT$ be a rigidly-compactly generated tensor-triangulated category. We say that $\scT$ satisfies the \emph{Telescope Conjecture} if every smashing ideal $\scS$ of~$\scT$ is \emph{compactly generated}, i.e., it holds that $\scS=\loc(\{x_i\}_{i\in I})$, where $\{x_i\}_{i\in I}$ is a collection of compact objects of $\scT$.
\end{defn}

\begin{rem}\label{rem:telescope}%
If $\scS$ is a compactly generated localizing ideal of $\scT$, then $\scS$ is a smashing ideal~\cite{Miller92}. The Telescope Conjecture is concerned with the converse statement. There are cases where it is known to be true, such as for derived categories of commutative noetherian rings~\cite{Neeman92} (or  noetherian schemes more generally~\cite{TarrioLopezSalorio04}), derived categories of absolutely flat rings~\cite{Stevenson14,BazzoniStovicek17}, and there are cases where it fails: derived categories of some non-noetherian valuation domains~\cite[Example 5.24]{BazzoniStovicek17} (see also~\cite[Section 7]{BalchinStevenson21}) and a construction by Keller~\cite{Keller94}.
\end{rem}

\begin{prop}
\label{prop:tc-iff-t0}%
The map $\psi\colon \Spcs \to \mathsf{Spc}(\scT^\rmc)^\vee$ exhibits $\mathsf{Spc}(\scT^\rmc)^\vee$ as the Kolmogorov quotient of $(\Spcs,\sfT)$. Moreover, the following are equivalent:
\begin{enumerate}[\rm(a)]
\item
\label{cond:tc-iff-spectral}%
$(\Spcs,\sfT)$ is spectral.
\item
\label{cond:tc-iff-t_0}%
$(\Spcs,\sfT)$ is $T_0$.
\item
\label{cond:tc-iff-small-homeo}%
$\psi\colon (\Spcs,\sfT)\to \mathsf{Spc}(\scT^\rmc)^\vee$ is a homeomorphism.
\item
\label{cond:tc-iff-homeo}%
$\psi\colon \Spcs \to \mathsf{Spc}(\scT^\rmc)^\vee$ is a homeomorphism.
\item
\label{cond:tc-iff-tc}%
$\scT$ satisfies the Telescope Conjecture.
\item
\label{cond:tc-iff-small=standard}%
The small and standard topologies on $\Spcs$ coincide.
\end{enumerate}
\end{prop}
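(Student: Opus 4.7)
The plan is to first establish the Kolmogorov-quotient claim for~$\psi$, and then derive the six equivalences from this identification, Stone duality for the small topology, and~\Cref{rem:comparison-of-spcs-with-spc}.

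For the Kolmogorov-quotient claim, I would first verify that $\psi\colon(\Spcs,\sfT)\to\mathsf{Spc}(\scT^\rmc)^\vee$ is a topological quotient map. The basic opens $\supp(x)$ of the Hochster dual satisfy $\psi^{-1}(\supp(x))=\Supps(x)$, a basic open of~$\sfT$; so every open in~$\sfT$ is of the form $\psi^{-1}(U)$ for some open~$U$, and surjectivity of~$\psi$ then gives the converse via $U=\psi(\psi^{-1}(U))$. I would then identify the closure equivalence classes in~$\sfT$ with the fibres of~$\psi$: by~\Cref{lem:coarse-closure-of-point}, $\ol{\{P\}}^\sfT=V_{\loc(P\cap\scT^\rmc)}$, so $\ol{\{P\}}^\sfT=\ol{\{Q\}}^\sfT$ iff $\loc(P\cap\scT^\rmc)=\loc(Q\cap\scT^\rmc)$, which simplifies to $P\cap\scT^\rmc=Q\cap\scT^\rmc$ because $\sloc$ is injective on thick ideals of~$\scT^\rmc$ with $\sloc(\scI)\cap\scT^\rmc=\scI$. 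Combined with sobriety of the Hochster dual, the induced continuous bijection from the Kolmogorov quotient is a homeomorphism.

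For the equivalences, the two easy sub-chains are (a)$\Leftrightarrow$(b)$\Leftrightarrow$(c) and (d)$\Leftrightarrow$(e)$\Leftrightarrow$(f). The first follows because (a)$\Rightarrow$(b) is general topology; (b)$\Rightarrow$(c) because a $T_0$ space coincides with its Kolmogorov quotient, and the latter is~$\mathsf{Spc}(\scT^\rmc)^\vee$ by the first part; and (c)$\Rightarrow$(a) because Hochster duals of spectral spaces are spectral. For the second, (d)$\Leftrightarrow$(e) is~\Cref{rem:comparison-of-spcs-with-spc}, and (e)$\Leftrightarrow$(f) follows from Stone duality: opens of~$(\Spcs,\text{standard})$ correspond to all of~$\sfS(\scT)$, while opens of~$(\Spcs,\sfT)$ have the form $\bigcup_\alpha U_{\loc(x_\alpha)}=U_{\loc(\{x_\alpha\})}$ and so correspond precisely to the compactly generated smashing ideals. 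Bridging the two sub-chains, (f)$\Rightarrow$(b) is immediate since~$(\Spcs,\text{standard})$ is sober and hence~$T_0$.

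The main obstacle is the remaining bridge (b)$\Rightarrow$(e): showing that injectivity of~$\psi$ forces the Telescope Conjecture. The plan here is to argue by contradiction using the spatiality of~$\sfS(\scT)$: a smashing ideal~$\scS$ strictly containing its compactly generated core $\loc(\scS\cap\scT^\rmc)$ would, via spatiality, be separated from it by a meet-prime~$P$ containing the core but not~$\scS$; one then uses the structure of meet-primes above~$\scS$ together with the hypothesis that $\sfS(\scT)$ is a spatial frame to produce a second meet-prime~$Q$ with $Q\cap\scT^\rmc=P\cap\scT^\rmc$, contradicting~(b). All other implications are formal consequences of the Kolmogorov-quotient identification or of direct lattice computations.
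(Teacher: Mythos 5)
Your treatment of the Kolmogorov-quotient claim is correct and matches the paper's argument in substance: you show $\psi$ is a quotient map with $\sfT$ on the source (using $\psi^{-1}(\supp(x))=\Supps(x)$ and surjectivity), and you identify the fibres of $\psi$ with closure-equivalence classes via \Cref{lem:coarse-closure-of-point} and Stone duality. (The appeal to sobriety of $\mathsf{Spc}(\scT^\rmc)^\vee$ is unnecessary at that step: once $\psi$ is a quotient map whose fibres are exactly the $\sfT$-closure classes, it is the Kolmogorov quotient by definition.) Your chains (a)$\Leftrightarrow$(b)$\Leftrightarrow$(c), (d)$\Leftrightarrow$(e)$\Leftrightarrow$(f), and (f)$\Rightarrow$(b) are also fine, and the argument for (e)$\Leftrightarrow$(f) via Stone duality is exactly the paper's.

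The genuine gap is your bridge (b)$\Rightarrow$(e). What you are trying to reprove there is precisely the nontrivial implication ``$\psi$ injective $\Rightarrow$ Telescope Conjecture,'' which (combined with the always-true surjectivity of $\psi$) is one direction of \cite[Corollary~5.1.6]{BalchinStevenson21}; the paper appeals to that result (together with \Cref{rem:comparison-of-spcs-with-spc}, i.e.\ TC $\Leftrightarrow$ $\psi$ homeomorphism) rather than reproving it, and uses it explicitly in the proof of \Cref{thm:tc}. Your sketch --- pick $\scS$ with $\loc(\scS\cap\scT^\rmc)\subsetneq\scS$, separate them by a meet-prime $P$, then ``produce a second meet-prime $Q$ with $Q\cap\scT^\rmc=P\cap\scT^\rmc$'' --- names exactly the hard step but does not deliver it. The natural candidate is $Q=\bigvee\set{\scR\in\sfS(\scT)}{\scR\cap\scT^\rmc\subseteq P\cap\scT^\rmc}$, which does contain both $P$ and $\scS$ and so is strictly larger than $P$; but showing that this $Q$ is meet-prime and, crucially, that $Q\cap\scT^\rmc=P\cap\scT^\rmc$, is exactly the content of Balchin--Stevenson's surjectivity/bijectivity analysis of $\psi$ and cannot be waved off as a consequence of spatiality. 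A cleaner route that avoids reproving it: note that (c) says $\psi$ is a homeomorphism in the small topology, hence bijective; since $\psi$ is always surjective, $\psi$ is a bijection; then cite \cite[Corollary~5.1.6]{BalchinStevenson21} to get (e), closing the loop as (c)$\Rightarrow$(e)$\Rightarrow$(f)$\Rightarrow$(b).
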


\begin{proof}
Let $P,Q\in \Spcs$. By~\Cref{lem:coarse-closure-of-point}, $\ol{\{P\}}^\sfT=\ol{\{Q\}}^\sfT$ if and only if $V_{\loc(P^\rmc)}=V_{\loc(Q^\rmc)}$. It follows, by Stone duality, that $\loc(P^\rmc)=\loc(Q^\rmc)$. Consequently, $P^\rmc=Q^\rmc$, i.e., $\psi(P)=\psi(Q)$. Let $V$ be a subset of $\Spc$ such that $\psi^{-1}(V)$ is closed in the small topology. Then $\psi^{-1}(V)=\bigcap_{i\in I}V_{\loc(x_i)}$, for some family of compact objects $\{x_i\}_{i\in I}$. Since $\Spcs \setminus \psi^{-1}(\supp(x_i))=\Spcs \setminus U_{\loc(x_i)}=V_{\loc(x_i)}$, it follows that $\psi^{-1}(V)=\psi^{-1}(\bigcap_{i \in I}(\Spc \setminus \supp(x_i)))$. Since $\psi$ is surjective, $V=\bigcap_{i \in I}(\Spc \setminus \supp(x_i))$, thus $V$ is closed in $\mathsf{Spc}(\scT^\rmc)^\vee$. This shows that $\psi$ is a quotient map. Therefore, $\sfK\sfQ((\Spcs,\sfT))=\mathsf{Spc}(\scT^\rmc)^\vee$.

As an immediate consequence, $(\Spcs,\sfT)$ is $T_0$ if and only if $\psi\colon (\Spcs,\sfT)\to \mathsf{Spc}(\scT^\rmc)^\vee$ is a homeomorphism. The equivalence of~\eqref{cond:tc-iff-spectral},~\eqref{cond:tc-iff-t_0},~\eqref{cond:tc-iff-small-homeo},~\eqref{cond:tc-iff-homeo} and~\eqref{cond:tc-iff-tc} is clear. Let $\scS$ be a smashing ideal. Then $U_\scS$ is open in the small topology if and only if $U_\scS=\bigcup_{i\in I}\Supps(x_i)=U_{\loc(\coprod_{i\in I}x_i)}$, for some family $\{x_i\}_{i\in I}\subseteq \scT^\rmc$. Equivalently, $\scS=\loc(\coprod_{i\in I}x_i)=\loc(\{x_i\}_{i\in I})$, so $\scS$ is compactly generated. This shows that~\eqref{cond:tc-iff-tc}~$\Leftrightarrow$~\eqref{cond:tc-iff-small=standard}.
\end{proof}

\begin{rem}
\label{rem:t_0-homological-spectrum}%
Barthel--Heard--Sanders~\cite{BarthelHeardSanders21b} proved an analogous result for the homological spectrum: The map $\phi\colon \Spch \to \Spc$, as in~\Cref{rem:comparison-phi}, exhibits $\Spc$ as the Kolmogorov quotient of $\Spch$. Thus, $\Spch$ is $T_0$ if and only if $\phi$ is a homeomorphism. In all examples where the map $\phi$ has been computed, it is known to be a homeomorphism;  see~\cite{Balmer20a}. Whether this is always true or not is still under investigation.~In contrast, as was shown in~\Cref{prop:tc-iff-t0}, the smashing spectrum is $T_0$ with respect to the topology with basis consisting of the supports of compact objects if and only if the Telescope Conjecture holds.
\end{rem}

\begin{rem}
Recall that the small topology on $\Spcs$ is generated by the subsets $\Supps(x)$, where $x\in \scT^\rmc$.~Let $x\in\scT^\rmc$.~Since $\Supps(x)$ is quasi-compact in the standard topology on $\Spcs$, which is finer than the small topology, it follows that $\Supps(x)$ is quasi-compact in the small topology.  As discussed in~\Cref{lem:support-of-compacts-is-open}, $\Supps(x)=U_{\loc(x)}$.~Therefore, the subsets $\Supps(x)$, where $x\in \scT^\rmc$, comprise a basis of quasi-compact open subsets for the small topology on $\Spcs$.~Since $\loc(x) \cap \loc(y)=\loc(x\ot y), \ \forall x,y\in \scT^\rmc$, it follows that the small topology has a basis of quasi-compact open subsets that is closed under finite intersections. In general, the small topology is not sober; see~\cite[Section 7]{BalchinStevenson21}.
\end{rem}

\section{Stratification}
\label{sec:strat}%

Throughout this section, we assume that~\Cref{hyp:spatial} holds. The first goal of this section is to prove~\Cref{prop:strat-class}, showing that stratification by the small smashing support is equivalent to two conditions --- the local-to-global principle and minimality --- that make it easier to verify stratification in practice. If the Telescope Conjecture holds,~\Cref{prop:strat-class} recovers~\cite[Theorem 4.1]{BarthelHeardSanders21a}. The second goal is to use~\Cref{prop:strat-class} in order to establish a bijective correspondence between the smashing spectrum and the big spectrum of a stratified big tt-category.
\begin{defn}\label{defn:strat}%
A big tt-category $\scT$ such that $\Spcs$ is $T_D$ is \emph{stratified by the small smashing support} if the maps
$
\begin{tikzcd}[cramped]
\scP(\Spcs) \rar[shift left,"\tau"] & \lar[shift left,"\sigma"] \Loc(\scT),
\end{tikzcd}%
$
between the powerset of the smashing spectrum and the collection of localizing ideals of $\scT$, defined by
\begin{equation*}
\tau(W)=\set{X\in \scT}{\supps(X)\subseteq W} \qquadtext{\&} \gs(\scL)=\bigcup_{X\in \scL}\supps(X)
\end{equation*}
are mutually inverse bijections.
\end{defn}

\begin{rem}
By the properties of $\supps$, it is clear that $\tau$ and $\sigma$ are well-defined.
\end{rem}

\begin{defn}
\label{defn:ltg-min}%
Let $\scT$ be a big tt-category such that $\Spcs$ is $T_D$.
\begin{enumerate}[\rm(a)]
\item
$\scT$ satisfies the \emph{local-to-global principle} if for every object $X\in \scT$, it holds that $\loc(X)=\loc\paren{\gG_P\ot X}{P\in \Spcs}$.
\item
$\scT$ satisfies \emph{minimality} if for every $P\in \Spcs$, it holds that $\loc(\gG_P)$ is a minimal localizing tensor-ideal.
\end{enumerate}
\end{defn}

\begin{rem}
Suppose that $\scT$ satisfies the local-to-global principle. Since $\scT=\loc(1)$, it follows that $\scT=\loc\paren{\gG_P}{P\in \Spcs}$.
\end{rem}

\begin{rem}
\label{rem:ess-image}%
It is clear that if $\loc(\gG_P)$ is minimal, then $\loc(\gG_P)=\loc(\gG_P\ot X)$, for every object $X\in \scT$ such that $\gG_P\ot X\neq 0$. Provided that the local-to-global principle holds, the converse also holds.
\end{rem}

\begin{rem}
\label{rem:supps-vanishing}%
If $\scT$ satisfies the local-to-global principle, then $\supps$ detects vanishing of objects, i.e., $\supps(X)=\varnothing \Leftrightarrow X=0$. Another simple observation is that
\begin{equation}
\label{eq:loc-optimal}%
\loc\paren{\gG_P\ot X}{P\in \Spcs}=\loc \parens{\gG_P\ot X}{P\in \supps(X)}.
\end{equation}
\end{rem}

\begin{rem}
\label{rem:spcs-extend}%
If $\scS\in \sfS(\scT)$, then $\gs(\scS)=\bigcup_{X\in \scS}\supps(X)\subseteq U_\scS=\supps(e_\scS)$. Since $e_\scS\in \scS$, it follows that $\gs(\scS)=U_\scS$. In other words, the map $\sigma$ extends the bijection $\sfS(\scT) \to \mcO(\Spcs)$ that takes a smashing ideal $\scS$ to the open subset $U_\scS$. The maps $\sigma$, $\tau$, and their respective restrictions, assemble into the diagram
\begin{equation*}
\begin{tikzcd}[row sep=3.5em]
\mcO(\Spcs) \rar["\cong"',shift left=1ex] \dar[hook] & \mathmakebox[\widthof{$\Loc(\scT)$}]{\sfS(\scT)} \lar[shift left=1ex] \dar[hook]
\\
\scP(\Spcs) \rar["\tau",tail,shift left=1ex] & \Loc(\scT) \lar["\gs",two heads,shift left=1ex]
\end{tikzcd}
\end{equation*}
where the two obvious squares commute.
\end{rem}

\subsection{Classification of localizing ideals}\label{subsec:class-of-loc-ideals}%
\begin{lem}
\label{lem:tau-inj}%
It holds that $\gs \circ \gt=\Id$ (therefore, $\tau$ is injective and $\sigma$ is surjective).
\end{lem}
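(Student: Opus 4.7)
The plan is to prove the two inclusions $\sigma(\tau(W)) \subseteq W$ and $W \subseteq \sigma(\tau(W))$ for an arbitrary subset $W \subseteq \Spcs$. The first inclusion is immediate from the definitions: every $X \in \tau(W)$ satisfies $\supps(X) \subseteq W$ by construction, hence their union $\sigma(\tau(W))$ is contained in $W$.

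For the reverse inclusion, the strategy is to exhibit, for each point $P \in W$, an explicit witness object $X \in \tau(W)$ with $P \in \supps(X)$. The natural candidate is the Rickard idempotent $\gG_P$, which is available because $\Spcs$ is assumed $T_D$. Concretely, I would choose a smashing ideal $\scS$ with $\{P\} = U_\scS \cap V_P$ and set $\gG_P = e_\scS \ot f_P$ as in \Cref{defn:Rickard-idempotent}.

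The key computation is then $\supps(\gG_P) = \{P\}$. For the containment $\supps(\gG_P) \subseteq \{P\}$, I would combine the weak tensor inequality $\supps(X \ot Y) \subseteq \supps(X) \cap \supps(Y)$ (the analogue of property \eqref{property:Supp-weak-tpf} in \Cref{lem:supps-properties}) with the identifications $\supps(e_\scS) = U_\scS$ and $\supps(f_P) = V_P$ also provided by \Cref{lem:supps-properties}, giving
\[
\supps(\gG_P) \subseteq \supps(e_\scS) \cap \supps(f_P) = U_\scS \cap V_P = \{P\}.
\]
For $P \in \supps(\gG_P)$, the point is that $\gG_P$ is idempotent: using symmetry of the tensor and the idempotency of $e_\scS$ and $f_P$ recorded in \Cref{rem:idempotent-triangles}, one gets $\gG_P \ot \gG_P \cong e_\scS \ot f_P \cong \gG_P$, and by \Cref{rem:smaller-neighborhoods-Rickards} this object is nonzero. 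Hence $P \in \supps(\gG_P)$, and since $\supps(\gG_P) = \{P\} \subseteq W$ we conclude $\gG_P \in \tau(W)$ and $P \in \sigma(\tau(W))$.

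I do not anticipate a serious obstacle: the only subtlety is making sure that the weak tensor-product inequality suffices (equality is not needed since one of the inclusions comes for free from idempotency), and that all the ingredients ($T_D$ hypothesis, support of $e_\scS$ and $f_P$, nonvanishing of $\gG_P$) are already in place from \Cref{sec:spcs}. The consequences $\tau$ injective and $\sigma$ surjective then follow formally from $\sigma \circ \tau = \mathrm{Id}$.
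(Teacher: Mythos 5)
Your proof is correct and follows essentially the same route as the paper: the first inclusion is definitional, and the second rests on the key identity $\supps(\gG_P)=\{P\}$, which the paper simply cites while you spell out its verification via the weak tensor-product inequality, the identifications $\supps(e_\scS)=U_\scS$, $\supps(f_P)=V_P$, and the idempotency and nonvanishing of $\gG_P$.
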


\begin{proof}
If $W\in \scP(\Spcs)$, then $\sigma(\tau(W))=\bigcup_{\supps(X)\subseteq W}\supps(X)\subseteq W$. In addition, $\supps(\gG_P)=\{P\}\subseteq W,\ \forall P\in W$. Thus, $W\subseteq \sigma(\tau(W))$, showing that $\sigma\circ \tau = \Id$, which proves the statement.
\end{proof}

\begin{lem}
\label{prop:min-supp}%
Suppose that $\scT$ satisfies minimality. Then the following hold:
\begin{enumerate}[\rm(a)]
\item
$\loc\paren{\gG_P\ot X}{P\in \Spcs}=\loc\parens{\gG_P}{P\in \supps(X)},\ \forall X\in \scT$.
\item
$\loc\parens{\gG_P}{P\in \sigma(\scL)}\subseteq \scL,\ \forall \scL \in \Loc(\scT)$.
\end{enumerate}
\end{lem}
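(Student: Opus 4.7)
For part~(a), the plan is to trim the generating set and then apply minimality to each surviving term. First, observe that $\gG_P\ot X = 0$ whenever $P\notin \supps(X)$, so the generators indexed by such $P$ contribute nothing to the localizing ideal; this gives the reduction $\loc\paren{\gG_P\ot X}{P\in \Spcs} = \loc\parens{\gG_P\ot X}{P\in \supps(X)}$, which is already recorded as equation~\eqref{eq:loc-optimal} in Remark~\ref{rem:supps-vanishing}. It then remains to identify $\loc(\gG_P\ot X)$ with $\loc(\gG_P)$ for each $P\in \supps(X)$. The containment $\loc(\gG_P\ot X) \subseteq \loc(\gG_P)$ is automatic, since $\loc(\gG_P)$ is a tensor-ideal containing $\gG_P$ and therefore contains $\gG_P\ot X$. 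For the reverse, note that $\loc(\gG_P)$ is a minimal (hence non-zero) localizing tensor-ideal by hypothesis, while $\loc(\gG_P\ot X)$ is a non-zero localizing tensor-ideal contained in it --- non-zero precisely because $P\in \supps(X)$ means $\gG_P\ot X \neq 0$; minimality then forces equality.

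For part~(b), I would simply unfold the definition $\sigma(\scL) = \bigcup_{X\in \scL}\supps(X)$. Given $P\in \sigma(\scL)$, choose $X\in \scL$ with $P\in \supps(X)$; then $\gG_P\ot X$ is non-zero and lies in $\scL$ because $\scL$ is a tensor-ideal, so $\loc(\gG_P\ot X) \subseteq \scL$. Applying the identification established in part~(a) (equivalently, Remark~\ref{rem:ess-image}), we get $\loc(\gG_P) = \loc(\gG_P\ot X) \subseteq \scL$, and taking the join over all $P\in \sigma(\scL)$ gives $\loc\parens{\gG_P}{P\in \sigma(\scL)} \subseteq \scL$.

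The only delicate point in the whole argument is the invocation of minimality in part~(a): without it, there is no mechanism for replacing $\gG_P\ot X$ by $\gG_P$ as a generator, and the statement would fail in general. Once that replacement is justified, both parts reduce to routine bookkeeping with tensor-ideals; in particular, the local-to-global principle plays no role in either part.
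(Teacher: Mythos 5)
Your proof is correct and follows exactly the paper's argument: reduce via equation \eqref{eq:loc-optimal}, replace each generator $\gG_P\ot X$ by $\gG_P$ using minimality, and in part (b) pick a witness $X\in\scL$ with $\gG_P\ot X\neq 0$ and apply the same minimality step. The only difference is that you spell out both containments in the identification $\loc(\gG_P\ot X)=\loc(\gG_P)$, which the paper leaves implicit.
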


\begin{proof}
$\phantom{}$
\begin{enumerate}[\rm(a)]
\item
If $P\in \supps(X)$, i.e., $\gG_P\ot X\neq 0$, then $\loc(\gG_P)=\loc(\gG_P\ot X)$ due to minimality of $\loc(\gG_P)$. In conjunction with~\eqref{eq:loc-optimal}:
\begin{equation*}
\loc\parens{\gG_P}{P\in \supps(X)}=\loc\paren{\gG_P\ot X}{P\in \Spcs}.
\end{equation*}
\item
Let $P\in \gs(\scL)$. Then there exists an object $X\in \scL$ such that $\gG_P\ot X\neq 0$. Since $X\in \scL$, it holds that $\gG_P\ot X\in \scL$. So, $\loc(\gG_P)=\loc(\gG_P\ot X)\subseteq \scL$. This proves that $\gG_P\in \scL,\ \forall P\in \sigma(\scL)$. Hence, $\loc\parens{\gG_P}{P\in \sigma(\scL)}\subseteq \scL$.\qedhere
\end{enumerate}
\end{proof}

\begin{lem}
\label{lem:supp-gen}%
Let $\scE$ be a set of objects of $\scT$. Then $\sigma(\loc(\scE))=\bigcup_{X\in \scE}\supps(X)$.
\end{lem}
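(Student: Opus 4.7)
The plan is to show the two inclusions separately. The easy direction is $\bigcup_{X\in \scE}\supps(X) \subseteq \sigma(\loc(\scE))$, which follows at once from $\scE \subseteq \loc(\scE)$ together with the definition of $\sigma$, since every $X \in \scE$ is in particular an object of $\loc(\scE)$, and hence $\supps(X)$ contributes to the union defining $\sigma(\loc(\scE))$.

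For the reverse inclusion $\sigma(\loc(\scE)) \subseteq \bigcup_{X \in \scE}\supps(X)$, I would argue via the $\tau$-map. Set $W = \bigcup_{X \in \scE}\supps(X)$ and consider $\tau(W) = \{Y \in \scT : \supps(Y) \subseteq W\}$. The key observation is that $\tau(W)$ is a localizing ideal: closure under suspension, triangles, and coproducts follows from parts~\eqref{property:Supp-suspesion},~\eqref{property:Supp-triangles}, and~\eqref{property:Supp-coprod} of the analogue of~\Cref{lem:Supps-properties} for $\supps$ (recorded in~\Cref{lem:supps-properties}), while closure under tensoring with an arbitrary object of $\scT$ follows from~\eqref{property:Supp-weak-tpf}, since $\supps(Z \ot Y) \subseteq \supps(Y) \subseteq W$ whenever $Y \in \tau(W)$.

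By construction, every $X \in \scE$ satisfies $\supps(X) \subseteq W$, so $\scE \subseteq \tau(W)$. Since $\tau(W)$ is a localizing ideal, the minimality of $\loc(\scE)$ gives $\loc(\scE) \subseteq \tau(W)$. Therefore $\supps(Y) \subseteq W$ for every $Y \in \loc(\scE)$, which yields $\sigma(\loc(\scE)) = \bigcup_{Y \in \loc(\scE)} \supps(Y) \subseteq W$, as desired.

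There is no real obstacle here: the whole argument is a standard ``generation'' trick that reduces a claim about objects of $\loc(\scE)$ to the formal properties of the support. The only thing to verify carefully is that $\tau(W)$ is indeed a localizing \emph{ideal} (not merely a localizing subcategory), and this is immediate from the tensor-compatibility property~\eqref{property:Supp-weak-tpf} of $\supps$.
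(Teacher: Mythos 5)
Your proof is correct and uses the same underlying ``generation'' idea as the paper, but packages it slightly differently. The paper argues pointwise: for a fixed $P\in\Spcs$, it observes that $P\notin\bigcup_{X\in\scE}\supps(X)$ iff $\scE\subseteq\Ker(\gG_P\ot-)$, and then — because $\Ker(\gG_P\ot-)$ is a localizing ideal — this is equivalent to $\loc(\scE)\subseteq\Ker(\gG_P\ot-)$, i.e.\ $P\notin\sigma(\loc(\scE))$. You instead work globally: you set $W=\bigcup_{X\in\scE}\supps(X)$, verify that $\tau(W)$ is a localizing ideal from the formal properties of $\supps$, note $\scE\subseteq\tau(W)$, and conclude $\loc(\scE)\subseteq\tau(W)$. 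The two routes are essentially equivalent — indeed $\tau(W)=\bigcap_{P\notin W}\Ker(\gG_P\ot-)$, so your localizing ideal is exactly the intersection of the paper's pointwise ones — and the fact that $\tau$ lands in $\Loc(\scT)$ is precisely what the paper's remark following \Cref{defn:strat} asserts. Your version buys a little more modularity (the structure of $\tau(W)$ is isolated as a standalone fact), while the paper's chain of equivalences is more compact and proves both inclusions simultaneously. One small stylistic point: when you say ``the minimality of $\loc(\scE)$,'' you mean that $\loc(\scE)$ is the \emph{smallest} localizing ideal containing $\scE$; this is safe, but it could be confused with the technical ``minimality'' hypothesis of \Cref{defn:ltg-min}, which is not used here and is not available at this point.
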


\begin{proof}
The result is deduced by the following host of equivalences making use, in the second one, of the fact that $\Ker(\gG_P\ot -)$ is a localizing ideal:
\begin{align*}
P\notin \bigcup_{X\in \scE}\supps(X) &\Leftrightarrow \scE\subseteq \Ker(\gG_P\ot -) \\[-8pt] &\Leftrightarrow \loc(\scE)\subseteq \Ker(\gG_P\ot -)\\
&\Leftrightarrow P\notin \bigcup_{X\in \loc(\scE)}\supps(X)=\sigma(\loc(\scE)).\qedhere
\end{align*}
\end{proof}

\begin{rem}
\label{rem:strat-supp-ideals}%
Suppose that $\scT$ is stratified by the small smashing support. Invoking~\Cref{lem:supp-gen}, we obtain:
$
\supps(X)=\supps(Y)\Leftrightarrow \loc(X)=\loc(Y)
$
(which implies that $\supps(X)=\Spcs\Leftrightarrow \loc(X)=\scT$). Consequently, the small smashing support distinguishes between objects that generate different localizing ideals. More precisely, we can define an equivalence relation on the class of objects of $\scT$ by declaring two objects $X,Y$ to be equivalent if $\loc(X)=\loc(Y)$. In case $\scT$ is stratified by the small smashing support, $X$ is equivalent to $Y$ if and only if $\supps(X)=\supps(Y)$.
\end{rem}

\begin{lem}
\label{cor:ltg-and-gp-tensor-x}
Suppose that $\scT$ satisfies the local-to-global principle and let $X\in \scT$ be a non-zero object and $P\in \Spcs$. If $X\in \loc(\gG_P)$, then $\gG_P\ot X\neq 0$.
\end{lem}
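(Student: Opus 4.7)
The plan is to show that if $X \in \loc(\gG_P)$, then $\supps(X) \subseteq \{P\}$, and then use the local-to-global principle together with non-vanishing of $X$ to pin down a non-zero stalk at $P$.

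First I would record the key computation $\supps(\gG_P) = \{P\}$. Writing $\gG_P = e_\scS \ot f_P$ for a smashing ideal $\scS$ with $U_\scS \cap V_P = \{P\}$, property~\eqref{property:Supp-weak-tpf} of~\Cref{lem:Supps-properties} (transferred to $\supps$ via~\Cref{lem:supps-properties}) gives $\supps(\gG_P) \subseteq \supps(e_\scS) \cap \supps(f_P) = U_\scS \cap V_P = \{P\}$. On the other hand, $\gG_P$ is a tensor-idempotent (since $e_\scS$ is left idempotent and $f_P$ is right idempotent), so $\gG_P \ot \gG_P \cong \gG_P$, and $\gG_P \neq 0$ by~\Cref{rem:smaller-neighborhoods-Rickards}; hence $P \in \supps(\gG_P)$, yielding equality.

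Next, for each $Q \in \Spcs$ with $Q \neq P$, I would consider $\scI_Q \coloneqq \set{Y \in \scT}{\gG_Q \ot Y = 0}$, which is a localizing tensor-ideal of $\scT$ since $\gG_Q \ot -$ is an exact coproduct-preserving tensor functor. The step above shows $\gG_P \in \scI_Q$, so $\loc(\gG_P) \subseteq \scI_Q$. Applying this to our $X \in \loc(\gG_P)$ gives $\gG_Q \ot X = 0$ for every $Q \neq P$, i.e.\ $\supps(X) \subseteq \{P\}$.

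Finally, by~\Cref{rem:supps-vanishing}, the local-to-global principle implies that $\supps$ detects vanishing, so $X \neq 0$ forces $\supps(X) \neq \varnothing$. Combined with the previous paragraph, this gives $\supps(X) = \{P\}$, which by definition of the small smashing support means $\gG_P \ot X \neq 0$. No step looks like a serious obstacle; the mild subtlety is just assembling the right combination of tensor-idempotence of $\gG_P$ and the subadditivity of $\supps$ on tensor products to obtain $\supps(\gG_P) = \{P\}$.
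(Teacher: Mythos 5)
Your proof is correct and essentially the same as the paper's: both reduce the statement to showing $\supps(X)\subseteq\{P\}$ and then invoke $\supps$ detecting vanishing (via the local-to-global principle, \Cref{rem:supps-vanishing}) to conclude $\supps(X)=\{P\}$. The paper gets $\supps(X)\subseteq\supps(\gG_P)=\{P\}$ by citing \Cref{lem:supp-gen} and the monotonicity of $\sigma$; you instead unfold that lemma's underlying argument, observing directly that $\gG_P\in\Ker(\gG_Q\ot-)$ for each $Q\neq P$ so $\loc(\gG_P)\subseteq\Ker(\gG_Q\ot-)$ — which is precisely how \Cref{lem:supp-gen} is proved — and you make explicit the computation $\supps(\gG_P)=\{P\}$ from idempotence and the sub-multiplicativity of $\supps$, a fact the paper uses without spelling out. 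Same argument, slightly more self-contained.
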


\begin{proof}
Since $X\in \loc(\gG_P)$, it holds that $\loc(X)\subseteq \loc(\gG_P)$. \Cref{lem:supp-gen} and the fact that $\gs$ is order-preserving lead to:
\[
\supps(X)=\gs(\loc(X))\subseteq \gs(\loc(\gG_P))=\supps(\gG_P)=\{P\}.
\]
As a result, $\supps(X)$ is either empty or equal to $\{P\}$.~\Cref{rem:supps-vanishing} implies that $\supps(X)\neq \varnothing$. Consequently, $\supps(X)=\{P\}$. Hence, $\gG_P\ot X\neq 0$.
\end{proof}

\begin{rem}
If $X\in \loc(\gG_P)$ is a non-zero object and $\loc(\gG_P)$ is minimal, then $\Ker(\gG_P\ot -)\cap \loc(\gG_P)$, being a localizing ideal, is either zero or $\loc(\gG_P)$. The latter cannot hold, since $\gG_P \neq 0$. It follows that $\gG_P\ot X\neq 0$.
\end{rem}

\begin{thm}
\label{prop:strat-class}%
The category $\scT$ is stratified by the small smashing support if and only if $\scT$ satisfies the local-to-global principle and minimality.
\end{thm}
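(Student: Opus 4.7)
The plan is to argue each direction using the preparatory lemmas above, leveraging the map $\sigma$ to translate between containments of localizing ideals and inclusions of subsets of $\Spcs$.

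For the forward direction, assume $\scT$ is stratified by the small smashing support. To verify the local-to-global principle, fix $X\in\scT$ and set $\scL=\loc\paren{\gG_P\ot X}{P\in\Spcs}$. The containment $\scL\subseteq\loc(X)$ is automatic, and by \Cref{lem:supp-gen} together with $\supps(\gG_P\ot X)\subseteq\{P\}\cap\supps(X)$, we obtain $\gs(\scL)\subseteq\supps(X)$; conversely, every $P\in\supps(X)$ lies in $\gs(\scL)$ since $\gG_P\ot X\neq 0$ generates an object in $\scL$ with support $\{P\}$, so $\gs(\scL)=\supps(X)=\gs(\loc(X))$. Since $\gs$ is a bijection by stratification, $\scL=\loc(X)$. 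For minimality, let $P\in\Spcs$ and let $0\neq\scL\subseteq\loc(\gG_P)$ be a localizing ideal. Then $\gs(\scL)\subseteq\gs(\loc(\gG_P))=\supps(\gG_P)=\{P\}$, and $\gs(\scL)\neq\varnothing$ because $\gs$ is injective and $\scL\neq 0$; hence $\gs(\scL)=\{P\}=\gs(\loc(\gG_P))$ forces $\scL=\loc(\gG_P)$.

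For the backward direction, assume the local-to-global principle and minimality. By \Cref{lem:tau-inj} we already have $\gs\circ\gt=\Id$, so it suffices to show $\gt\circ\gs=\Id$, i.e., $\gt(\gs(\scL))=\scL$ for every localizing ideal $\scL$. The inclusion $\scL\subseteq\gt(\gs(\scL))$ is clear from the definitions. For the reverse inclusion, take $X\in\gt(\gs(\scL))$, so $\supps(X)\subseteq\gs(\scL)$. The local-to-global principle gives
\[
\loc(X)=\loc\paren{\gG_P\ot X}{P\in\Spcs}=\loc\parens{\gG_P\ot X}{P\in\supps(X)},
\]
and \Cref{prop:min-supp}(a) (which uses minimality) identifies the right-hand side with $\loc\parens{\gG_P}{P\in\supps(X)}$. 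Since $\supps(X)\subseteq\gs(\scL)$, this is contained in $\loc\parens{\gG_P}{P\in\gs(\scL)}$, which in turn lies inside $\scL$ by \Cref{prop:min-supp}(b). Therefore $X\in\scL$, completing the proof.

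The main technical point is the backward direction, where one must unpack an arbitrary localizing ideal $\scL$ through its small smashing support; both hypotheses are used there, with local-to-global reducing any $X$ to the pieces $\gG_P\ot X$ and minimality turning those pieces into the fixed generators $\gG_P$. The forward direction is essentially formal once one observes that bijectivity of $\gs$ rules out any non-trivial sub-ideal of $\loc(\gG_P)$ and pins down $\loc(X)$ by its support.
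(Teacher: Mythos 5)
Your proof is correct and follows essentially the same route as the paper's: the backward direction reproduces the paper's chain of containments via the local-to-global principle, equation~\eqref{eq:loc-optimal}, and \Cref{prop:min-supp}, while the forward direction computes $\sigma$ of both sides and invokes injectivity of $\sigma$. The only minor variation is in the minimality argument of the forward direction, where you apply injectivity of $\sigma$ directly to an arbitrary nonzero sub-ideal $\scL\subseteq\loc(\gG_P)$, whereas the paper first establishes $\loc(\gG_P\ot X)=\loc(\gG_P)$ whenever $\gG_P\ot X\neq 0$ and leaves the passage to minimality implicit (via the standard fact that $\gG_P\ot Y\cong Y$ for $Y\in\loc(\gG_P)$); your version is arguably slightly more self-contained but is not a different method.
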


\begin{proof}
($\Rightarrow$) Suppose that $\scT$ satisfies the local-to-global principle and minimality. Since $\sigma \circ \tau=\Id$, it suffices to show that $\tau\circ \sigma=\Id$.~Let $\scL$ be a localizing ideal of $\scT$.~The relation $\scL\subseteq (\tau\circ\sigma)(\scL)$ follows from the definition of $\tau$ and $\sigma$.~Let $X\in (\tau\circ\sigma)(\scL)$, i.e., $\supps(X)\subseteq \sigma(\scL)$.~Then
\begin{align*}
\loc(X)&=\loc\paren{\gG_P\ot X}{P\in \Spcs}\tag{local-to-global principle}\\[-1.45pt]
&=\loc\parens{\gG_P\ot X}{P\in \supps(X)} \tag{\ref{eq:loc-optimal}}\\[-1.45pt]
&=\loc\parens{\gG_P}{P\in \supps(X)} \tag{\Cref{prop:min-supp}}\\[-1.45pt]
&\subseteq \loc\parens{\gG_P}{P\in \sigma(\scL)} \tag{$\supps(X)\subseteq \sigma(\scL)$}\\[-1.45pt]
&\subseteq \scL. \tag{\Cref{prop:min-supp}}
\end{align*}
As a result, $X\in \scL$. So, $(\tau\circ \sigma)(\scL)\subseteq \scL$ implying that $\tau\circ \sigma=\Id$.

($\Leftarrow$) Let $X$ be an object of $\scT$ such that $\gG_P\ot X\neq 0$, i.e., $P\in \supps(X)$. Then $\supps(\gG_P\ot X)\subseteq \supps(\gG_P)\cap \supps(X)=\{P\}\cap \supps(X)=\{P\}$. Since $\gG_P$ is an idempotent, $P\in \supps(\gG_P\ot X)$. Therefore, $\supps(\gG_P\ot X)=\{P\}=\supps(\gG_P)$. According to~\Cref{lem:supp-gen}, $\gs(\loc(\gG_P\ot X))=\supps(\gG_P\ot X)=\supps(\gG_P)=\gs(\loc(\gG_P))$. Since $\sigma$ is injective, $\loc(\gG_P\ot X)=\loc(\gG_P)$. This establishes minimality. Next, use the relation $\supps(\gG_P\ot X)=\{P\}$, when $P\in \supps(X)$, and~\Cref{lem:supp-gen} to deduce that
\begin{align*}
\sigma \big(\loc\parens{\gG_P\ot X}{P\in \supps(X)}\big)&=\bigcup_{P\in \supps(X)}\supps(\gG_P\ot X)\\
&=\bigcup_{P\in \supps(X)}\{P\}\\
&=\supps(X)\\
&=\sigma(\loc(X)).
\end{align*}
Since $\sigma$ is injective, $\loc(X)=\loc\parens{\gG_P\ot X}{P\in \supps(X)}$. Consequently, $\scT$ satisfies the local-to-global-principle.
\end{proof}

\begin{cor}
\label{cor:loc-set}%
If $\scT$ satisfies the local-to-global principle and minimality, then the collection $\Loc(\scT)$ of localizing ideals of $\scT$ is a set and every localizing ideal of $\scT$ is generated by a set of objects, hence by a single object.
\end{cor}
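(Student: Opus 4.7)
The plan is to read everything off from \Cref{prop:strat-class}: once stratification by the small smashing support has been established, the bijection $\tau\colon \scP(\Spcs)\to \Loc(\scT)$ immediately bounds the size of $\Loc(\scT)$ by the size of a set, and inspecting the forward direction of the theorem shows how to exhibit a concrete generating set.

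First I would invoke \Cref{prop:strat-class} to conclude that $\tau$ and $\sigma$ are mutually inverse bijections. Since $\sfS(\scT)$ is a set (so its spectrum $\Spcs$ is a set) and hence $\scP(\Spcs)$ is a set, the bijection $\tau$ shows that $\Loc(\scT)$ is a set.

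For the second assertion, fix $\scL\in \Loc(\scT)$. I claim that
\[
\scL = \loc\paren{\gG_P}{P\in \sigma(\scL)}.
\]
The inclusion ``$\supseteq$'' is precisely \Cref{prop:min-supp}(b). For the reverse inclusion, pick any $X\in \scL$. Using the local-to-global principle, formula~\eqref{eq:loc-optimal}, minimality (in the form $\loc(\gG_P\ot X)=\loc(\gG_P)$ whenever $P\in\supps(X)$, given by \Cref{prop:min-supp}(a)), and the evident containment $\supps(X)\subseteq \sigma(\scL)$, exactly as in the forward direction of the proof of \Cref{prop:strat-class}, we obtain
\[
\loc(X)\;=\;\loc\parens{\gG_P}{P\in \supps(X)}\;\subseteq\;\loc\paren{\gG_P}{P\in \sigma(\scL)}.
\]
Taking the union over $X\in \scL$ gives the desired inclusion, so $\scL$ is generated by the set $\{\gG_P\mid P\in \sigma(\scL)\}$.

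Finally, to reduce to a single generator, set $G_\scL=\coprod_{P\in \sigma(\scL)}\gG_P$, which exists because $\scT$ has coproducts. Since each $\gG_P$ is a retract of $G_\scL$ and, conversely, $G_\scL$ lies in any localizing subcategory containing all the $\gG_P$, we have $\loc(G_\scL)=\loc(\gG_P\mid P\in \sigma(\scL))=\scL$. There is no real obstacle here; the entire content is bundled into \Cref{prop:strat-class} and \Cref{prop:min-supp}, and the only ``new'' observation is that stratification provides an explicit generating set indexed by a subset of $\Spcs$, which can then be collapsed via the coproduct.
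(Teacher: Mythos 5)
Your proof is correct. The first half matches the paper exactly: both invoke \Cref{prop:strat-class} to get the bijection $\tau\colon \scP(\Spcs)\to\Loc(\scT)$ and observe that $\Spcs$ is a set.

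For the second half, you take a genuinely different and more self-contained route. The paper appeals to [KrauseStevenson17, Lemma~3.3.1] (via [BarthelHeardSanders21a, Prop.~3.5]), which is an abstract argument that a triangulated category with only a set of localizing subcategories has each one singly generated. You instead read off an explicit generating set directly from the stratification machinery: combining \Cref{prop:min-supp}(b) with the chain of inclusions from the forward direction of \Cref{prop:strat-class} gives
\[
\scL \;=\; \loc\parens{\gG_P}{P\in \sigma(\scL)},
\]
and the passage to a single generator $\coprod_{P\in\sigma(\scL)}\gG_P$ is the standard observation that each summand of a coproduct is a retract of it (so lies in $\loc$ of the coproduct) while the coproduct itself lies in $\loc$ of the summands. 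Your argument buys an explicit description of a one-object generator for each $\scL$, while the paper's citation is shorter but less informative. Both are valid; yours is arguably preferable for a reader who wants to see the generator rather than just know it exists.
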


\begin{proof}
The first half of the statement emanates instantly from~\Cref{prop:strat-class}. The second half stems from~\cite[Lemma~3.3.1]{KrauseStevenson17} by specializing the arguments to localizing ideals instead of general localizing subcategories, as noted in~\cite[Proposition~3.5]{BarthelHeardSanders21a}.
\end{proof}

\subsection{Objectwise and big primes}\label{subsec:Obj-and-big-primes}%

\begin{defn}\label{defn:types-of-ideals}%
Let $\scL$ be a proper localizing tensor-ideal of $\scT$.
\begin{enumerate}[\rm(a)]
\item
$\scL$ is called \emph{objectwise-prime} if $X\ot Y\in \scL$ implies $X\in \scL$ or $Y\in \scL$.
\item
$\scL$ is called \emph{radical} if $X^{\ot n}\in \scL$ implies $X\in \scL$, for all $n\geq 1$.
\item
$\scL$ is called a \emph{big prime} if $\scL$ is radical and $\scI_1\cap \scI_2\subseteq \scL$ implies $\scI_1\subseteq \scL$ or $\scI_2\subseteq \scL$, for all radical localizing tensor-ideals $\scI_1,\scI_2$.
\end{enumerate}
The collection of big prime localizing tensor-ideals of $\scT$, introduced in~\cite{BalchinStevenson21} and called the \emph{big spectrum} of $\scT$, is denoted by $\SPC$.
Evidently, if $\scL$ is objectwise-prime, then $\scL$ is radical.
\end{defn}

\begin{prop}
\label{prop:tpf}%
Suppose that $\scT$ satisfies minimality. Then the following hold:
\begin{enumerate}[\rm(a)]
\item
\label{cond:tpf}%
$\supps(X\ot Y)=\supps(X)\cap \supps(Y),\ \forall X,Y\in \scT$. $($Tensor Product Formula$)$
\item
\label{cond:kernels-are-obj-primes}%
$\Ker(\gG_P\ot -)$ is an objectwise-prime localizing ideal of $\scT,\ \forall P\in \Spcs$.
\end{enumerate}
\end{prop}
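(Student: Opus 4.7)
The plan is to establish (a) first and then derive (b) as a near-immediate corollary via the definition of objectwise-prime.

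For part (a), one inclusion $\supps(X\ot Y)\subseteq \supps(X)\cap \supps(Y)$ is already recorded (it follows from the analogue of~\Cref{lem:Supps-properties}\eqref{property:Supp-weak-tpf} for $\supps$, as in~\Cref{lem:supps-properties}). The substantive content is the reverse inclusion. So I would fix a point $P\in \supps(X)\cap \supps(Y)$, meaning $\gG_P\ot X\neq 0$ and $\gG_P\ot Y\neq 0$, and aim to show $\gG_P\ot X\ot Y\neq 0$. The key leverage is~\Cref{rem:ess-image}: since $\loc(\gG_P)$ is minimal by assumption and $\gG_P\ot X\neq 0$, we have $\loc(\gG_P)=\loc(\gG_P\ot X)$. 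In particular, $\gG_P\in \loc(\gG_P\ot X)$. Tensoring with $Y$ (which preserves the localizing ideal $\loc(\gG_P\ot X\ot Y)$ generated by $\gG_P\ot X\ot Y$, viewed inside $\scT$) yields $\gG_P\ot Y\in \loc(\gG_P\ot X\ot Y)$. If $\gG_P\ot X\ot Y$ were zero, then $\loc(\gG_P\ot X\ot Y)=0$, forcing $\gG_P\ot Y=0$ and contradicting $P\in \supps(Y)$. Hence $P\in \supps(X\ot Y)$, completing the tensor product formula.

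For part (b), the property that $\Ker(\gG_P\ot -)$ is a localizing ideal is already used in the proof of~\Cref{lem:supp-gen} and requires only the exactness of $\gG_P\ot -$ together with its compatibility with coproducts and the tensor product. Properness follows from $\gG_P\neq 0$ (guaranteed by $P$ being locally closed, as noted in~\Cref{rem:smaller-neighborhoods-Rickards}), which gives $1\notin \Ker(\gG_P\ot -)$. For the objectwise-prime condition, suppose $X\ot Y\in \Ker(\gG_P\ot -)$, i.e., $P\notin \supps(X\ot Y)$. Part (a) gives $P\notin \supps(X)\cap \supps(Y)$, so $P\notin \supps(X)$ or $P\notin \supps(Y)$, which translates to $X\in \Ker(\gG_P\ot -)$ or $Y\in \Ker(\gG_P\ot -)$, as required.

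The only real obstacle is the reverse inclusion in (a), and this is overcome by the minimality hypothesis exactly as above; there is no subtlety in deriving (b) from (a) beyond invoking $\gG_P\neq 0$ for properness. I should also mention that the proof crucially uses the idempotence of $\gG_P$ only implicitly, through the results of Section~\ref{subsec:Smashing-supports}; no direct manipulation of the triangles defining $e_\scS$ and $f_P$ is needed.
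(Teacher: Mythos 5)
Your proof is correct. Both you and the paper reduce part (a) to showing that $\gG_P\ot X\neq 0$ and $\gG_P\ot Y\neq 0$ imply $\gG_P\ot X\ot Y\neq 0$, note that (a) and (b) encode the same statement, and use minimality to reach a contradiction from $\gG_P\ot X\ot Y=0$. The point of divergence is how that contradiction is reached. The paper applies minimality in the form $\loc(\gG_P)=\loc(\gG_P\ot Y)\subseteq\loc(Y)$, deduces $\gG_P\in\Ker(\gG_P\ot X\ot -)$, and then invokes idempotence of $\gG_P$ (so that $\gG_P\ot X\ot\gG_P\cong\gG_P\ot X$) to contradict $\gG_P\ot X\neq 0$. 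You instead apply minimality to obtain $\gG_P\in\loc(\gG_P\ot X)$, tensor this containment with $Y$ via~\Cref{lem:image-of-ideal} to get $\gG_P\ot Y\in\loc(\gG_P\ot X\ot Y)=0$, and contradict $\gG_P\ot Y\neq 0$ directly. Your route dispenses with the explicit appeal to idempotence at the contradiction step, substituting the tensor-ideal closure property of $\loc(-)$; both arguments are short and the gain is mostly aesthetic. One minor caution: the parenthetical ``which preserves the localizing ideal $\loc(\gG_P\ot X\ot Y)$'' is phrased loosely --- what you actually use is that $\set{Z}{Z\ot Y\in\loc(\gG_P\ot X\ot Y)}$ is a localizing tensor-ideal containing $\gG_P\ot X$, which is exactly~\Cref{lem:image-of-ideal} for the endofunctor $Y\ot -$; it would be cleaner to cite that lemma directly rather than gesture at it.
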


\begin{proof}
First of all, the statements~\eqref{cond:tpf} and~\eqref{cond:kernels-are-obj-primes} are equivalent, since they both state: $\forall X,Y\in \scT,\ \forall P\in \Spcs$: $\gG_P\ot X\ot Y=0$ if and only if $\gG_P\ot X=0$ or $\gG_P\ot Y=0$. Let $X,Y\in \scT$ such that $\gG_P\ot X\neq 0$ and $\gG_P\ot Y\neq 0$. If $Y\in \Ker(\gG_P\ot X\ot -)$, then $\loc(\gG_P)=\loc(\gG_P\ot Y)\subseteq \loc(Y)\subseteq \Ker(\gG_P\ot X\ot -)$. Therefore, $\gG_P\in \Ker(\gG_P\ot X\ot -)$, which is a contradiction, since we assumed that $\gG_P\ot X\neq 0$. This proves that $\gG_P\ot X\ot Y\neq 0$ and the proof is complete.
\end{proof}

\begin{rem}
\label{rem:nilpotents}%
From~\Cref{rem:supps-vanishing} and~\Cref{prop:tpf}, we learn that if $\scT$ is stratified, then $\supps$ detects vanishing of objects and satisfies the Tensor Product Formula. As a consequence, $\scT$ cannot have any non-zero $\ot$-nilpotent objects.
\end{rem}

\begin{lem}
\label{lem:int-ker}%
Let $\scL$ be a localizing ideal of $\scT$. Then
\[
\tau(\sigma(\scL))=\bigcap_{\scL\subseteq \Ker(\gG_P\ot -)}\Ker(\gG_P\ot -).
\]
\end{lem}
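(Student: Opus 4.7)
The plan is to prove the equality by directly unraveling the definitions of $\tau$, $\sigma$, $\supps$, and the kernels of $\gG_P \ot -$, hinging on a single translation between containment of $\scL$ in such a kernel and membership of $P$ in the complement of $\sigma(\scL)$.

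First I would establish the key equivalence: for any localizing ideal $\scL$ and any $P\in \Spcs$,
\[
\scL \subseteq \Ker(\gG_P\ot -) \iff P \notin \sigma(\scL).
\]
This is immediate, since $\scL \subseteq \Ker(\gG_P\ot -)$ means $\gG_P\ot X = 0$ for every $X\in \scL$, i.e.\ $P \notin \supps(X)$ for every $X\in \scL$, which is precisely $P \notin \bigcup_{X\in\scL}\supps(X) = \sigma(\scL)$.

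Next I would chase an arbitrary object $X\in\scT$ through both sides:
\begin{align*}
X \in \tau(\sigma(\scL))
&\iff \supps(X) \subseteq \sigma(\scL) \\
&\iff \forall P\in\Spcs,\ P \notin \sigma(\scL) \Rightarrow P \notin \supps(X) \\
&\iff \forall P\in\Spcs,\ \scL \subseteq \Ker(\gG_P\ot -) \Rightarrow \gG_P\ot X = 0 \\
&\iff X \in \bigcap_{\scL \subseteq \Ker(\gG_P\ot -)} \Ker(\gG_P\ot -),
\end{align*}
where the third equivalence uses the key translation above. This gives the claimed equality.

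There is essentially no obstacle here; the statement is a direct definitional unwinding and does not require stratification, minimality, or the local-to-global principle. The one small point worth flagging explicitly is that each $\Ker(\gG_P\ot -)$ is a localizing ideal (since $\gG_P\ot -$ is a coproduct-preserving exact functor and the zero subcategory is a localizing ideal), so the intersection on the right-hand side is a bona fide localizing ideal, matching the type of $\tau(\sigma(\scL))$.
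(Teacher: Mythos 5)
Your proof is correct and takes essentially the same approach as the paper: both are direct definitional unwindings hinging on the translation $\scL\subseteq \Ker(\gG_P\ot -)\iff P\notin \sigma(\scL)$, with the only cosmetic difference being that you chase $X\in$ both sides while the paper chases $X\notin$ both sides.
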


\begin{proof}
Let $X$ be an object of $\scT$. Then $X\notin \bigcap_{\scL\subseteq \Ker(\gG_P\ot -)}\Ker(\gG_P\ot -)$ if and only if there exists $P\in \Spcs$ such that $\scL \subseteq \Ker(\gG_P\ot -)$ and $\gG_P\ot X\neq 0$. Equivalently, $P\in \supps(X)$ and $P\notin \bigcup_{Y\in\scL}\supps(Y)=\gs(\scL)$. In other words, $\supps(X)\nsubseteq \sigma(\scL)$. By definition of $\tau$, the latter happens if and only if $X\notin \tau(\sigma(\scL))$.
\end{proof}

\begin{cor}
\label{thm:radicals}%
If $\scT$ is stratified by the small smashing support, then all localizing ideals of $\scT$ are radical.
\end{cor}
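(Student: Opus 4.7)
My proposal is to deduce the corollary immediately from the two key consequences of stratification already established: the classification (\Cref{prop:strat-class}) and the Tensor Product Formula (\Cref{prop:tpf}).

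First, I observe that stratification by the small smashing support forces minimality by \Cref{prop:strat-class}, so \Cref{prop:tpf} applies and yields $\supps(X\ot Y)=\supps(X)\cap \supps(Y)$ for all $X,Y\in \scT$. An easy induction then gives $\supps(X^{\ot n})=\supps(X)$ for every $n\geq 1$.

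Now fix a localizing ideal $\scL\subseteq \scT$ and suppose $X^{\ot n}\in \scL$. Then $\supps(X)=\supps(X^{\ot n})\subseteq \sigma(\scL)$ by the definition of $\sigma$, so $X\in \tau(\sigma(\scL))$. But stratification provides $\tau\circ\sigma=\Id$ on $\Loc(\scT)$, hence $X\in \scL$, proving that $\scL$ is radical.

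There is really no obstacle here: the argument is a two-line combination of the Tensor Product Formula with the fact that $\tau$ is the inverse of $\sigma$. The only thing to take care of is to invoke \Cref{prop:strat-class} first in order to legitimately apply \Cref{prop:tpf}, since the latter requires only minimality while stratification also supplies the local-to-global principle.
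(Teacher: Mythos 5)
Your proof is correct. You and the paper both lean on the same two pillars — stratification gives $\tau\circ\sigma=\Id$ (via \Cref{prop:strat-class}) and minimality gives the Tensor Product Formula (via \Cref{prop:tpf}) — but you route around the intermediate structural step. The paper first invokes \Cref{lem:int-ker} to write $\scL=\bigcap_{\scL\subseteq\Ker(\gG_P\ot-)}\Ker(\gG_P\ot-)$, then uses \Cref{prop:tpf} to see that each $\Ker(\gG_P\ot-)$ is objectwise-prime hence radical, and finally appeals to the fact that radical ideals are stable under intersection. You instead apply the Tensor Product Formula directly at the level of supports to get $\supps(X^{\ot n})=\supps(X)$ and then feed this into $\tau\circ\sigma=\Id$ to conclude $X\in\scL$. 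Your version is a little shorter and avoids \Cref{lem:int-ker} entirely; the paper's version has the advantage of explicitly exhibiting $\scL$ as an intersection of objectwise-prime ideals, which is a structural fact reused in \Cref{cor:obj-big-primes} and the surrounding discussion of big primes. Both are sound, and the difference is one of packaging rather than substance.
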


\begin{proof}
Let $\scL$ be a localizing ideal of $\scT$. Since $\scT$ is stratified, it holds that $\tau\circ\sigma=\Id$. Further,~\Cref{lem:int-ker} implies that $\scL=\bigcap_{\scL\subseteq \Ker(\gG_P\ot -)}\Ker(\gG_P\ot -)$. By~\Cref{prop:strat-class}, $\scT$ satisfies minimality. According to~\Cref{prop:tpf}, each $\Ker(\gG_P\ot -)$ is objectwise-prime, hence radical. Since radical ideals are closed under intersections, $\scL$ is radical.
\end{proof}

\begin{lem}
\label{lem:form-ker}%
Suppose that $\scT$ satisfies the local-to-global principle. Then for all $P\in \Spcs$, it holds that $\Ker(\gG_P\ot -)=\loc\parens{\gG_Q}{Q\neq P}$.
\end{lem}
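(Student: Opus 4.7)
The plan is to prove both inclusions directly, exploiting the identification $\supps(\gG_R)=\{R\}$ for each $R\in\Spcs$ together with the fact that $\supps$ detects vanishing under the local-to-global principle. Before anything else, I would record that $\supps(\gG_R)=\{R\}$: writing $\gG_R=e_\scS\ot f_R$ with $\{R\}=U_\scS\cap V_R$, the identifications $\supps(e_\scS)=U_\scS$ and $\supps(f_R)=V_R$ from \Cref{lem:supps-properties}, combined with the weak tensor-product inclusion, yield $\supps(\gG_R)\subseteq \{R\}$; and the idempotence $\gG_R\ot \gG_R\cong \gG_R\neq 0$ forces $R\in \supps(\gG_R)$.

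For the containment $\loc\parens{\gG_Q}{Q\neq P}\subseteq \Ker(\gG_P\ot -)$, I would show that $\gG_P\ot \gG_Q=0$ whenever $Q\neq P$. By the weak tensor-product inclusion, $\supps(\gG_P\ot \gG_Q)\subseteq \{P\}\cap\{Q\}=\varnothing$, and under the local-to-global principle $\supps$ detects vanishing by~\Cref{rem:supps-vanishing}, so $\gG_P\ot \gG_Q=0$. As $\Ker(\gG_P\ot -)$ is a localizing ideal that therefore contains every $\gG_Q$ with $Q\neq P$, the inclusion follows.

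For the reverse inclusion, I would take $X\in \Ker(\gG_P\ot -)$, equivalently $P\notin \supps(X)$, and invoke the local-to-global principle together with~\eqref{eq:loc-optimal} to write
\[
\loc(X)=\loc\parens{\gG_Q\ot X}{Q\in \supps(X)}.
\]
Every index $Q$ appearing on the right is distinct from $P$, and since $\loc(\gG_Q)$ is a tensor-ideal, $\gG_Q\ot X\in \loc(\gG_Q)\subseteq \loc\parens{\gG_Q}{Q\neq P}$; hence $X\in \loc\parens{\gG_Q}{Q\neq P}$. The only delicate point is detection of vanishing, which is precisely what the local-to-global hypothesis supplies; otherwise this is a direct bookkeeping exercise with the support formalism.
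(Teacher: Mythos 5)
Your argument is correct and follows the same two-inclusion strategy as the paper's proof: first establish $\gG_P\ot\gG_Q=0$ for $Q\neq P$ to get one containment, then apply the local-to-global principle (together with the observation that $P\notin\supps(X)$ for $X\in\Ker(\gG_P\ot-)$) to get the other. One small redundancy: once you have shown $\supps(\gG_P)=\{P\}$, the vanishing $\gG_Q\ot\gG_P=0$ for $Q\neq P$ is immediate from the definition of $\supps$, so the extra step through ``$\supps$ detects vanishing'' is not needed; the paper simply takes $\gG_P\ot\gG_Q=0$ as a standard property of the Rickard idempotents.
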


\begin{proof}
Let $P$ and $Q$ be distinct meet-prime smashing ideals. Since $\gG_P\ot \gG_Q=0$, it follows that $\gG_Q\in \Ker(\gG_P\ot -)$. Therefore, $\loc\parens{\gG_Q}{Q\neq P}\subseteq \Ker(\gG_P\ot -)$. If $X\in \Ker(\gG_P\ot -)$,
then $\loc(X)=\loc\parens{\gG_Q\ot X}{Q\neq P}\subseteq \loc\parens{\gG_Q}{Q\neq P}$. Hence, $\Ker(\gG_P\ot -)\subseteq \loc\parens{\gG_Q}{Q\neq  P}$, proving the claimed equality.
\end{proof}

\begin{prop}
\label{prop:objprimes}%
Suppose that $\scT$ is stratified by the small smashing support. Then every objectwise-prime localizing ideal $\scL$ is of the form $\Ker(\gG_P\ot -)$, for a unique $P\in \Spcs$.
\end{prop}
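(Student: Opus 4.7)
The plan is to leverage the bijection between $\scP(\Spcs)$ and $\Loc(\scT)$ provided by stratification. Since $\tau$ and $\sigma$ are mutually inverse, each localizing ideal $\scL$ is uniquely determined by $W := \sigma(\scL) \subseteq \Spcs$, and unwinding definitions one checks $\Ker(\gG_P \ot -) = \tau(\Spcs \setminus \{P\})$. So proving the proposition amounts to showing that, for an objectwise-prime $\scL$, the complement $\Spcs \setminus W$ is a singleton; once this is done, uniqueness of $P$ is immediate from injectivity of $\tau$.

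Properness of $\scL$ already forces $W \neq \Spcs$, so $\Spcs \setminus W$ is non-empty. For the reverse — that the complement contains at most one point — I would argue by contradiction: suppose $P \neq Q$ both lie in $\Spcs \setminus W$. Then $\gG_P, \gG_Q \notin \scL$, because $\supps(\gG_P) = \{P\} \nsubseteq W$ and likewise for $Q$ (using that $W = \sigma(\scL) = \bigcup_{X\in\scL}\supps(X)$). On the other hand, by the Tensor Product Formula (\Cref{prop:tpf}, whose minimality hypothesis is supplied by stratification via \Cref{prop:strat-class}), $\supps(\gG_P \ot \gG_Q) = \{P\} \cap \{Q\} = \varnothing$. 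Stratification also furnishes the local-to-global principle (\Cref{prop:strat-class}), under which $\supps$ detects vanishing, so $\gG_P \ot \gG_Q = 0 \in \scL$. This contradicts objectwise-primeness of $\scL$.

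Putting it together, $\Spcs \setminus W = \{P\}$ for a unique $P$, whence $\scL = \tau(\Spcs \setminus \{P\}) = \Ker(\gG_P \ot -)$. The main conceptual step is the mutual annihilation $\gG_P \ot \gG_Q = 0$ for distinct $P, Q$, which is really the engine driving the argument; I do not foresee any serious obstacle, since both the Tensor Product Formula and vanishing detection are already in hand from the stratification hypothesis.
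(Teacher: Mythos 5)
Your proof is correct, and it reorganizes the argument in a way that is cleaner than the paper's. The core observation is the same: by objectwise-primeness and $\gG_P\ot\gG_Q=0$ for distinct $P\neq Q$, the ideal $\scL$ must contain at least one of $\gG_P,\gG_Q$ for every such pair, so at most one Rickard idempotent lies outside $\scL$. The paper then cashes this in by hand: it cites \Cref{lem:form-ker} to identify $\Ker(\gG_P\ot-)$ as $\loc\parens{\gG_Q}{Q\neq P}$, obtains $\Ker(\gG_P\ot-)\subseteq\scL$, and runs an explicit minimality argument to rule out proper containment. You instead translate the observation into ``$\Spcs\setminus\sigma(\scL)$ has at most one element,'' use properness of $\scL$ plus injectivity of $\sigma$ to get exactly one element $P$, and conclude $\scL=\tau(\sigma(\scL))=\tau(\Spcs\setminus\{P\})=\Ker(\gG_P\ot-)$ directly from the bijection. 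This bypasses \Cref{lem:form-ker} entirely, letting the classification bijection do the work that the paper does by hand. One small remark: you derive $\gG_P\ot\gG_Q=0$ for $P\neq Q$ from the tensor product formula and vanishing detection, which both require stratification; this fact is actually an elementary consequence of the construction of Rickard idempotents (writing $\gG_P\ot\gG_Q=e_{\scS_1\cap\scS_2}\ot f_{P\vee Q}$ and checking $U_{\scS_1\cap\scS_2}\cap V_{P\vee Q}=\varnothing$), and the paper treats it as known without any hypothesis. Your route is valid but uses heavier machinery than necessary for that step.
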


\begin{proof}
If $P$ and $Q$ are distinct meet-prime smashing ideals, then $\gG_P\ot \gG_Q=0\in \scL$. Therefore, $\gG_P\in \scL$ or $\gG_Q\in \scL$. Since $\scT=\loc\paren{\gG_P}{P\in \Spcs}$, and $\scL$ is proper by definition, $\scL$ contains all Rickard idempotents except one. So, there exists a meet-prime smashing ideal $P$ such that $\Ker(\gG_P\ot -)=\loc\parens{\gG_Q}{Q\neq P}\subseteq \scL$, where the equality is by~\Cref{lem:form-ker}. Suppose that this containment relation is proper. Then there exists an object $X\in \scL$ such that $\gG_P\ot X\neq 0$. Since $\loc(\gG_P)$ is minimal, $\loc(\gG_P\ot X)=\loc(\gG_P)$. Moreover, $\gG_P\ot X\in \scL$, thus, $\gG_P\in \scL$. This forces $\scL=\scT$, leading to a contradiction. Uniqueness follows from the fact that $\gG_P\ot \gG_Q=0$ if and only if $P\neq Q$.
\end{proof}

\begin{cor}
\label{cor:obj-big-primes}%
Suppose that $\scT$ is stratified by the small smashing support. Then the big prime localizing ideals of $\scT$ coincide with the objectwise-prime localizing ideals of $\scT$ and there is only a set of such. In particular,
\[
\SPC=\set{\Ker(\gG_P\ot -)}{P\in \Spcs}.
\]
\end{cor}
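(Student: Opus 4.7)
The strategy is to show that, under stratification, the notions of \emph{objectwise-prime} and \emph{big prime} localizing ideals coincide; the explicit description of $\SPC$ and the set-theoretic size assertion then follow immediately from~\Cref{prop:objprimes} (which identifies objectwise-primes with the kernels $\Ker(\gG_P\ot-)$, bijectively indexed by $P\in\Spcs$).

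\textbf{Objectwise-prime $\Rightarrow$ big prime.} Let $\scL$ be objectwise-prime; by~\Cref{defn:types-of-ideals} it is radical. Suppose $\scI_1,\scI_2$ are radical localizing ideals with $\scI_1\cap\scI_2\subseteq\scL$ and, toward a contradiction, that neither is contained in $\scL$. Pick $X\in\scI_1\setminus\scL$ and $Y\in\scI_2\setminus\scL$. Since each $\scI_j$ is a tensor-ideal, $X\ot Y\in\scI_1\cap\scI_2\subseteq\scL$, so objectwise-primality forces $X\in\scL$ or $Y\in\scL$, a contradiction. Hence $\scL$ is a big prime.

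\textbf{Big prime $\Rightarrow$ objectwise-prime.} Let $\scL$ be a big prime and assume $X\ot Y\in\scL$. I want to apply the big prime condition to $\loc(X)$ and $\loc(Y)$; by~\Cref{thm:radicals} every localizing ideal of $\scT$ is radical under stratification, so this is legitimate. It remains to verify that $\loc(X)\cap\loc(Y)\subseteq\scL$. Since stratification gives $\scL=\tau(\sigma(\scL))$, it suffices to show that every $Z\in\loc(X)\cap\loc(Y)$ satisfies $\supps(Z)\subseteq\sigma(\scL)$. By~\Cref{lem:supp-gen} applied to $\loc(X)$ and to $\loc(Y)$, we have $\supps(Z)\subseteq\supps(X)\cap\supps(Y)$, which equals $\supps(X\ot Y)$ by the Tensor Product Formula (\Cref{prop:tpf}). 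But $X\ot Y\in\scL$, so $\supps(X\ot Y)\subseteq\sigma(\scL)$, giving $Z\in\scL$. The big prime condition then yields $\loc(X)\subseteq\scL$ or $\loc(Y)\subseteq\scL$, whence $X\in\scL$ or $Y\in\scL$.

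\textbf{Conclusion and main obstacle.} Combining the two implications with~\Cref{prop:objprimes} produces the claimed equality $\SPC=\set{\Ker(\gG_P\ot-)}{P\in\Spcs}$, and shows that this is indeed a set. The only nontrivial step is the containment $\loc(X)\cap\loc(Y)\subseteq\scL$ in the second direction; this is precisely where one uses stratification in full (to rewrite $\scL$ via $\tau\circ\sigma$) together with the Tensor Product Formula coming from minimality. Everything else is a formal manipulation of the definitions.
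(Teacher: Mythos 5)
Your proof is correct, and it takes a genuinely different (more self-contained) route than the paper. The paper's own proof is a one-liner: it invokes \Cref{thm:radicals}, \Cref{prop:objprimes}, and then outsources the equivalence of objectwise-prime and big prime to \cite[Lemma~4.1.7]{BalchinStevenson21} (which applies once one knows all localizing ideals are radical). You instead re-derive the equivalence directly from the paper's internal machinery. Your first implication (objectwise-prime $\Rightarrow$ big prime) is entirely formal and uses nothing about stratification --- just that tensor-ideals absorb tensoring. Your second implication is where stratification enters in full force: you need \Cref{thm:radicals} to make $\loc(X), \loc(Y)$ admissible inputs to the big prime condition, \Cref{lem:supp-gen} plus the Tensor Product Formula (\Cref{prop:tpf}) to get $\supps(Z)\subseteq\supps(X\ot Y)$ for $Z\in\loc(X)\cap\loc(Y)$, and the relation $\scL=\tau(\sigma(\scL))$ to close the loop. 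This is exactly the content that the external Lemma~4.1.7 packages, and your version buys the reader a self-contained argument at the cost of a paragraph; the paper's version buys brevity at the cost of a reference to a source that is still a preprint. Both are valid, and your explicit verification that the only non-trivial ingredient is $\loc(X)\cap\loc(Y)\subseteq\scL$ is a clean way to organize it.
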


\begin{proof}
The statement follows from~\Cref{thm:radicals}, \Cref{prop:objprimes} and~\cite[Lemma~4.1.7]{BalchinStevenson21}.
\end{proof}

\begin{cor}
\label{cor:spcs-objprimes}%
Suppose that $\scT$ is stratified by the small smashing support. Then the map $\Spcs \to \SPC,\, P\mapsto \Ker(\gG_P\ot -)$ is bijective.
\end{cor}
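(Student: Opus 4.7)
The plan is to deduce both surjectivity and injectivity from results already in hand. Surjectivity is immediate from~\Cref{cor:obj-big-primes}, which identifies $\SPC$ with the set $\set{\Ker(\gG_P\ot -)}{P\in \Spcs}$, so by definition every big prime is hit by the assignment $P\mapsto \Ker(\gG_P\ot -)$.

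For injectivity, the fastest route is to invoke the uniqueness clause in~\Cref{prop:objprimes}, which already asserts that the point $P$ producing a given objectwise-prime kernel is unique. Alternatively, one argues directly: suppose $P,Q\in \Spcs$ satisfy $\Ker(\gG_P\ot -)=\Ker(\gG_Q\ot -)$ and assume $P\neq Q$. Then $\gG_P\ot \gG_Q=0$ (meet-primes are distinct, so $\gG_P$ and $\gG_Q$ annihilate each other), hence $\gG_Q\in \Ker(\gG_P\ot -)=\Ker(\gG_Q\ot -)$. This forces $\gG_Q\ot \gG_Q=0$, but $\gG_Q$ is idempotent, so $\gG_Q=0$, contradicting $\supps(\gG_Q)=\{Q\}\neq \varnothing$. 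Hence $P=Q$.

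There is no real obstacle here: the corollary is a book-keeping consequence of the preceding two results, and the only small point to check is that the explicit argument for injectivity uses the idempotence of $\gG_Q$ and the fact that $\supps(\gG_Q)=\{Q\}$, both of which are standard properties of Rickard idempotents already recorded in the paper.
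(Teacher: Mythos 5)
The paper gives no explicit proof of this corollary — it is intended as a direct bookkeeping consequence of the two preceding results — and your argument is exactly the intended one: surjectivity reads off from the identification $\SPC=\set{\Ker(\gG_P\ot -)}{P\in \Spcs}$ in \Cref{cor:obj-big-primes}, and injectivity is the uniqueness clause already contained in \Cref{prop:objprimes}. Your alternative direct check of injectivity is also sound (it essentially re-proves the uniqueness clause of \Cref{prop:objprimes}, using $\gG_P\ot\gG_Q=0$ for $P\neq Q$, idempotence of $\gG_Q$, and $\gG_Q\neq 0$), so there is nothing to add.
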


\begin{rem}
\label{rem:smash-obj}%
Let $P$ be a meet-prime smashing ideal of $\scT$. It is straightforward to verify that $\sigma(\Ker(\gG_P\ot -))=\Spcs\setminus\{P\}$. Utilizing this relation and~\Cref{rem:spcs-extend} leads to the following series of equivalences:
\[
\Ker(\gG_P\ot -)\in \sfS(\scT) \Leftrightarrow \Ker(\gG_P\ot -)=P \Leftrightarrow V_P=\{P\}.
\]
The first equivalence holds because if $\Ker(\gG_P\ot -)$ is smashing, then $\Ker(\gG_P\ot -)$ is contained in some $Q\in \Spcs$. Since $\gG_Q\notin Q$, it holds that $\gG_P\ot \gG_Q\neq 0$. Therefore, $P=Q$. Combined with the inclusion $P\subseteq \Ker(\gG_P\ot -)$, we obtain $\Ker(\gG_P\ot -)=P$. This also explains the implication $\Ker(\gG_P\ot -)=P\Rightarrow V_P=\{P\}$. If $V_P=\{P\}$, then $\gG_P=f_P$. Thus, $\Ker(\gG_P\ot -)=\Ker(f_P\ot -)=P$.
\end{rem}

\begin{cor}
Suppose that $\scT$ is stratified by the small smashing support. Then the smashing objectwise-prime localizing ideals of $\scT$ correspond to the closed points, with respect to the standard topology, of $\Spcs$.
\end{cor}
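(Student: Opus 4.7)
The plan is to chain together three facts that have already been assembled: the identification of objectwise-prime localizing ideals with kernels $\Ker(\gG_P\ot-)$ (\Cref{cor:obj-big-primes}), the characterization in \Cref{rem:smash-obj} of exactly when such a kernel is smashing, and the description of the closure of a point in the standard topology from \Cref{rem:spcs-opens}.

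Concretely, I would argue as follows. Under the stratification hypothesis, \Cref{cor:obj-big-primes} shows that every objectwise-prime localizing ideal of $\scT$ is of the form $\Ker(\gG_P\ot-)$ for a (unique) point $P\in\Spcs$, and conversely each such kernel is objectwise-prime by \Cref{prop:tpf}. So the task reduces to pinning down those $P$ for which $\Ker(\gG_P\ot-)$ is additionally smashing. By \Cref{rem:smash-obj}, this happens if and only if $V_P=\{P\}$. Finally, \Cref{rem:spcs-opens} records that the closure of $\{P\}$ in the standard topology on $\Spcs$ is precisely $V_P$, so the condition $V_P=\{P\}$ is exactly the statement that $P$ is a closed point. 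Hence the bijection $P\mapsto \Ker(\gG_P\ot-)$ from \Cref{cor:spcs-objprimes} restricts to a bijection between closed points of $(\Spcs,\text{standard})$ and smashing objectwise-prime localizing ideals, as claimed.

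There is essentially no obstacle here beyond bookkeeping: all the content has already been isolated in the referenced statements, and the proof is a one-line synthesis. The only subtlety worth flagging is that the correspondence is cleaner than it might look because when $\Ker(\gG_P\ot-)$ is smashing it actually equals $P$ itself, so the bijection from \Cref{cor:spcs-objprimes} identifies a closed point $P$ with the smashing ideal $P$ viewed as an objectwise-prime localizing ideal.
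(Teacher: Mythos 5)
Your proof is correct and follows the same path as the paper, which simply cites \Cref{cor:spcs-objprimes} and \Cref{rem:smash-obj}; you add the (in-the-paper implicit) observation from \Cref{rem:spcs-opens} that $V_P$ is the closure of $\{P\}$, so $V_P=\{P\}$ is exactly the condition that $P$ be a closed point. Nothing more is needed.
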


\begin{proof}
The claim follows from~\Cref{cor:spcs-objprimes} and~\Cref{rem:smash-obj}.
\end{proof}

We conclude with an observation about the small and big smashing supports.
\begin{prop}
\label{lem:supports-right-idempotents}%
The small and big smashing supports coincide, i.e., $\supps(X)=\Supps(X),\, \forall X\in \scT$, if and only if every point of $\Spcs$, with respect to the standard topology, is a closed point, i.e., $\Spcs$ is $T_1$.
\end{prop}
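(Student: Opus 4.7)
The plan hinges on the fact, already recorded in~\Cref{rem:smash-obj}, that $V_P=\{P\}$ is equivalent to $\gG_P=f_P$, in which case $\Ker(\gG_P\ot-)=P$. Since $\Spcs$ being $T_1$ means exactly $V_P=\{P\}$ for every $P$, this remark is the essential bridge between the two sides of the equivalence; my proof will unpack it in both directions using appropriately chosen test objects.

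The easy implication deduces $\supps=\Supps$ from $T_1$-ness of $\Spcs$. I would proceed pointwise: fix $X\in\scT$ and $P\in\Spcs$. By~\Cref{rem:smash-obj}, $\gG_P=f_P$, and the localization triangle $e_P\to 1\to f_P$ attached to the smashing ideal $P$ gives $f_P\ot X=0$ if and only if $X\in P$. Reading off both supports then yields $\supps(X)=\Supps(X)$ without further calculation.

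The converse is the substantive part. The tactic is to test the hypothesis $\supps=\Supps$ on the Rickard idempotents themselves. Using the orthogonality $\gG_R\ot\gG_Q=0$ whenever $R\neq Q$ (as exploited in~\Cref{lem:form-ker}), together with $\gG_Q\ot\gG_Q=\gG_Q\neq 0$, one computes $\supps(\gG_Q)=\{Q\}$ for every $Q\in\Spcs$. The assumption then forces $\Supps(\gG_Q)=\{Q\}$, which translates into the two conditions $\gG_Q\in R$ for every $R\neq Q$ and $\gG_Q\notin Q$. Now suppose towards a contradiction that some $Q\neq P$ lies in $V_P$, so that $P\subseteq Q$. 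Applying the previous observation with $R=P$ yields $\gG_Q\in P\subseteq Q$, contradicting $\gG_Q\notin Q$. Therefore $V_P=\{P\}$ for every $P\in\Spcs$, which is exactly the $T_1$ condition.

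The main obstacle I anticipate is conceptual rather than technical: one must resist plugging an arbitrary object into $\supps=\Supps$ and instead recognize that the Rickard idempotents $\gG_Q$ are the correct probes, since only for these does $\supps$ collapse to a singleton via orthogonality. Once that choice is made, the argument reduces to a short inclusion chase inside the smashing-ideal lattice, and no further ingredients beyond facts already used in this section are required.
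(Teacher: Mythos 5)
Your proof is correct and, at its core, is the same argument the paper makes: the proposition reduces to the equivalence $\Ker(\gG_P\ot -)=P\ \forall P\Leftrightarrow\Spcs\text{ is }T_1$, which is exactly Remark~\ref{rem:smash-obj}. The paper simply records the chain $\supps=\Supps\Leftrightarrow(P=\Ker(\gG_P\ot -)\ \forall P)\Leftrightarrow T_1$ and cites that remark; you unfold the two directions explicitly — using $V_P=\{P\}\Rightarrow\gG_P=f_P$ for the easy implication, and testing $\Supps(\gG_Q)=\supps(\gG_Q)=\{Q\}$ together with orthogonality of Rickard idempotents to derive the $T_1$ condition by contradiction. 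Both routes lean on the same facts (orthogonality $\gG_P\ot\gG_Q=0$ for $P\ne Q$, non-vanishing of $\gG_Q$, the definitions of the two supports), so this is a faithful re-derivation rather than a different strategy.
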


\begin{proof}
It holds: $\supps=\Supps \Leftrightarrow \forall X\in \scT,\, \forall P\in \Spcs\colon \gG_P\ot X\neq 0 \Leftrightarrow X\notin P$. Equivalently, $P=\Ker(\gG_P\ot -),\ \forall P\in \Spcs$. By~\Cref{rem:smash-obj}, this statement holds if and only if every point of $\Spcs$ is a closed point.
\end{proof}

\begin{ex}
\label{ex:spcs-closed-points}%
Let $R$ be a commutative absolutely flat ring. Then $\sfD(R)$ satisfies the Telescope Conjecture; see~\cite{Stevenson14,BazzoniStovicek17}. So, $\Spcsa{\sfD(R)}\cong \Spec(R)^\vee$ is $T_1$, since, in this particular case, $\Spec(R)$ is Hausdorff (and $\Spec(R)\cong \Spec(R)^\vee$). Hence, the big and small smashing supports on $\sfD(R)$ coincide.
\end{ex}

\section{Smashing localizations}
\label{sec:local}%
Recall our assumption that~\Cref{hyp:spatial} holds. The aim of this section is to prove that if a big tt-category $\scT$ satisfies the local-to-global principle and each meet-prime smashing localization $\scT/P$ is stratified by the small smashing support, then $\scT$ is stratified by the small smashing support. If $\scT$ satisfies the Telescope Conjecture, this recovers~\cite[Corollary 5.3]{BarthelHeardSanders21a}. The local-to-global principle hypothesis cannot be dropped. Further, we show that if $\Spcs$ admits a cover by finitely many closed subsets such that each corresponding smashing localization is stratified, then $\scT$ is stratified.

\subsection{Meet-prime smashing localizations}
\begin{lem}
\label{lem:image-of-ideal}%
Let $F_1\colon \scC\to \scD$ be a coproduct-preserving tensor-triangulated functor and $F_2\colon \scC\to \scC$ a coproduct-preserving triangulated endofunctor of $\scC$ such that $F_2(X\ot Y)\cong X\ot F_2Y$. Then for every collection of objects $\scX\subseteq\scC$, if $X\in \loc(\scX)$, then $F_iX\in \loc(F_i(\scX)),\ i=1,2$.
\end{lem}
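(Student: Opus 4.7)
The plan is to use the standard ``Jacobsonian'' trick for verifying closure under generation: exhibit for each $i\in\{1,2\}$ the subcategory
\[
\scS_i=\set{X\in\scC}{F_iX\in\loc(F_i(\scX))}
\]
as a localizing tensor-ideal of $\scC$ containing $\scX$, which forces $\loc(\scX)\subseteq\scS_i$ and gives the desired conclusion.

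The key step is to verify the three closure properties of $\scS_i$ using the assumed structure of $F_i$. Triangulated closure is immediate in both cases since $F_i$ is triangulated: from a triangle $X\to Y\to Z\to \gS X$ in $\scC$ and the triangle $F_iX\to F_iY\to F_iZ\to \gS F_iX$ in $\scD$ (resp.\ $\scC$), the two-out-of-three property of the localizing ideal $\loc(F_i(\scX))$ gives $F_iY\in\loc(F_i(\scX))$. Closure under arbitrary coproducts follows because $F_i$ preserves coproducts, so $F_i(\coprod_j X_j)\cong\coprod_j F_iX_j$ lies in $\loc(F_i(\scX))$ whenever each $F_iX_j$ does. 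The containment $\scX\subseteq\scS_i$ is trivial.

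The main thing to pin down is the tensor-ideal condition, and this is where the two hypotheses on $F_1$ and $F_2$ play distinct but parallel roles. For $i=1$, given $X\in\scS_1$ and any $Y\in\scC$, monoidality gives $F_1(Y\ot X)\cong F_1Y\ot F_1X$, which lies in $\loc(F_1(\scX))$ because $\loc(F_1(\scX))$ is a tensor-ideal of $\scD$ and $F_1X\in\loc(F_1(\scX))$; hence $Y\ot X\in\scS_1$. For $i=2$, the projection-formula-type hypothesis $F_2(Y\ot X)\cong Y\ot F_2X$ reduces the problem to the tensor-ideal property of $\loc(F_2(\scX))$ inside $\scC$ itself. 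No step is a real obstacle; the mild care is just in invoking the correct form of the tensor isomorphism in each case.

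With all three closure properties and $\scX\subseteq\scS_i$ in hand, minimality of $\loc(\scX)$ as a localizing tensor-ideal containing $\scX$ yields $\loc(\scX)\subseteq\scS_i$, which, unpacking the definition of $\scS_i$, is exactly the statement $F_iX\in\loc(F_i(\scX))$ for every $X\in\loc(\scX)$.
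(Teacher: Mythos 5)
Your proof is correct and follows exactly the same strategy as the paper: define $\scS_i=\set{Y\in\scC}{F_iY\in\loc(F_i(\scX))}$, check that it is a localizing tensor-ideal of $\scC$ containing $\scX$ (using monoidality of $F_1$, respectively the projection-formula hypothesis on $F_2$, for the ideal property), and conclude $\loc(\scX)\subseteq\scS_i$. The paper merely states this in one line; you have spelled out the verification.
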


\begin{proof}
The collection $\set{Y\in \scC}{F_iY\in \loc(F_i(\scX))}$ is a localizing ideal of $\scC$ that contains $\scX$. Hence, it contains $\loc(\scX)$ and, as a result, $F_iX\in \loc(F_i(\scX))$.
\end{proof}

\begin{rem}
\label{rem:gen-image-of-ideal}%
A generalized version of the second case of~\Cref{lem:image-of-ideal} is the following: Let $\scC$ be a tensor-triangulated category that acts on two triangulated categories $\scD_1$ and $\scD_2$, in the sense of~\cite{Stevenson13}, with the actions denoted by $\ast_1$ and $\ast_2$, respectively. If $F\colon \scD_1 \to \scD_2$ is a coproduct-preserving triangulated functor that preserves the action of $\scC$, i.e., $F(X\ast_1 Y)\cong X\ast_2 FY,\ \forall X\in \scC,\ \forall Y\in \scD_1$ and $\scX$ is a collection of objects of $\scD_1$, then $F(\mathsf{loc}^{\ast_1}(\scX))\subseteq \mathsf{loc}^{\ast_2}(F(\scX))$.
\end{rem}

\begin{rec}
\label{rem:ideals-of-quotient}%
Let $\scL$ be a localizing ideal of $\scT$. Then the localizing ideals of $\scT/\scL$ stand in bijection with the localizing ideals of $\scT$ that contain $\scL$. More precisely, if
$j_\scL\colon \scT \to \scT/\scL$ is the quotient functor, then the map that takes a localizing ideal $\scR\subseteq \scT/\scL$ to $j_\scL^{-1}(\scR)$ is a bijection with inverse given by taking direct images of localizing ideals of $\scT$ under $j_\scL$. If $j_\scL$ has a right adjoint $k_\scL$, then there is a projection formula: $k_\scL(j_\scL (X)\ot Y)\cong X\ot k_\scL(Y),\ \forall X\in \scT,\ Y\in \scT/\scL$~\cite{BalmerDellambrogioSanders16}.
\end{rec}

\begin{rem}
\label{rem:inverse-image-localization-functor}%
In keeping with the notation of~\Cref{rem:ideals-of-quotient}, if $\scL$ is a smashing ideal, then $k_\scL j_\scL=-\ot f_\scL$. In addition, if $\scR$ is a localizing ideal that contains $\scL$, then $(-\ot f_\scL)^{-1}(\scR)=\scR$. Indeed, if $X\ot f_\scL\in \scR$, then tensoring the idempotent triangle corresponding to $\scL$ with $X$ yields $e_\scL\ot X\to X\to f_\scL\ot X$. Since $\scL\subseteq \scR$, it follows that $e_\scL\ot X\in \scR$, thus $X\in \scR$. Since $\scR$ is an ideal, the converse inclusion also holds.
\end{rem}

Let $P$ be a smashing ideal of $\scT$ (not necessarily meet-prime). Then $\scT/P$ is a big tt-category and the quotient functor $j_P\colon \scT \to \scT/P$ is an essentially surjective coproduct-preserving tt-functor with a fully faithful right adjoint $k_P\colon \scT/P\to \scT$ that preserves coproducts, since $j_P$ preserves rigid=compact objects. Therefore, $j_P$ induces an injective continuous map $f\colon \Spcsl{P}\to \Spcs$. By identifying $\Spcsl{P}$ with $V_P$, the induced map $f$ is identified with the inclusion $V_P\hookrightarrow \Spcs$.~If $\Spcs$ is $T_D$, then $\Spcsl{P}$ is $T_D$, since being $T_D$ is a hereditary topological property.~If $Q\in \Spcsl{P}$, i.e., $Q\supseteq P$, then the corresponding Rickard idempotent is $j_P(\gG_Q)$.
\begin{lem}
\label{lem:vanishing-of-rickards}%
Let $P\in \sfS(\scT)$ and $Q \in \Spcs$ such that $P\nsubseteq Q$. Then $j_P(\gG_Q)=0$.
\end{lem}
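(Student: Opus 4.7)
The plan is to show that $\gG_Q \in P$, since the quotient functor $j_P$ has kernel exactly $P$. Because $Q$ is locally closed (the $T_D$ assumption needed to define $\gG_Q$), fix a smashing ideal $\scS$ with $U_\scS \cap V_Q = \{Q\}$, so that $\gG_Q = e_\scS \ot f_Q$; the goal reduces to showing $e_\scS \ot f_Q \in P$.

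The first and most substantive step is a Stone-dual translation of the hypothesis: I will prove that $\scS \subseteq P \vee Q$ in the frame $\sfS(\scT)$. By spatiality (\Cref{hyp:spatial}), this inclusion can be verified pointwise by checking that every meet-prime $R \in V_{P \vee Q} = V_P \cap V_Q$ lies in $V_\scS$. For such $R$, the hypothesis $P \nsubseteq Q$ combined with $P \subseteq R$ forces $R \neq Q$, so $R \in V_Q \setminus \{Q\}$; and the defining identity $U_\scS \cap V_Q = \{Q\}$ rewrites as $V_Q \setminus \{Q\} = V_Q \cap V_\scS$, giving $R \in V_\scS$ as required.

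To finish, I invoke \Cref{lem:image-of-ideal} with the coproduct-preserving triangulated endofunctor $F_2 = -\ot f_Q$, which satisfies $F_2(X \ot Y) \cong X \ot F_2(Y)$ by symmetry of $\ot$. From $e_\scS \in \scS \subseteq P \vee Q = \loc(P \cup Q)$, the lemma yields $e_\scS \ot f_Q \in \loc((P \cup Q) \ot f_Q)$. Since $P$ is a tensor-ideal, $P \ot f_Q \subseteq P$; and the defining idempotent triangle $e_Q \to 1 \to f_Q$ shows that every object of $Q$ is annihilated by $f_Q$. Hence $(P \cup Q) \ot f_Q \subseteq P$ and therefore $\loc((P \cup Q) \ot f_Q) \subseteq P$, completing the argument.

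The essential obstacle is the first step: one must exploit spatiality of $\sfS(\scT)$ to convert the intersection identity $U_\scS \cap V_Q = \{Q\}$ into the frame inclusion $\scS \subseteq P \vee Q$. The rest is then a formal application of the preceding \Cref{lem:image-of-ideal}, whose placement immediately before this lemma is no accident.
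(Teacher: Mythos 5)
Your argument is correct, and it reduces, as the paper's does, to showing $\gG_Q\in P$ (so that $j_P(\gG_Q)=0$); but the two routes are genuinely different. The paper's proof is more direct: since $P\nsubseteq Q$ means $Q\in U_P$, one may shrink the open part of the locally closed representative and write $\{Q\}=U_{P\cap\scS}\cap V_Q$, so that, by the independence of the Rickard idempotent from this choice (\Cref{rem:smaller-neighborhoods-Rickards}), $\gG_Q=e_{P\cap\scS}\ot f_Q=e_P\ot e_\scS\ot f_Q$, exhibiting $e_P\in P$ as a tensor factor of $\gG_Q$ and finishing immediately. You instead establish the frame inequality $\scS\subseteq P\vee Q$ by a pointwise check over $V_{P\vee Q}$ (this is where spatiality is invoked) and then transport $e_\scS$ along $-\ot f_Q$ via \Cref{lem:image-of-ideal}, landing in $\loc\bigl((P\cup Q)\ot f_Q\bigr)\subseteq P$. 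Your version makes explicit a frame-theoretic inclusion that the paper leaves implicit, at the cost of a Stone-dual detour; the paper's version needs only the idempotent identity $e_{P\cap\scS}=e_P\ot e_\scS$ from \cite{BalmerFavi11} and no pointwise reasoning. In particular, what you flag as ``the essential obstacle'' --- converting the locally closed identity into a frame inclusion --- is an artifact of your chosen route: once the open neighborhood is shrunk to $U_{P\cap\scS}$, the membership $\gG_Q\in P$ is manifest and the lemma is essentially a one-liner.
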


\begin{proof}
Let $\scS$ be a smashing ideal such that $\{Q\}= U_\scS\cap V_Q$ (recall that $\Spcs$ is assumed $T_D$). Since $P\nsubseteq Q$, we have $U_{P\cap \scS}\cap V_Q=U_P\cap U_\scS\cap V_Q=\{Q\}$. This means that $\gG_Q=e_{P\cap \scS}\ot f_Q=e_P\ot e_\scS\ot f_Q$. So, $j_P(\gG_Q)=j_P(e_P)\ot j_P(e_\scS)\ot j_P(f_Q)=0$, due to the fact that $e_P\in P$.
\end{proof}

\begin{prop}
\label{prop:ltg-quotients}%
Suppose that $\scT$ satisfies the local-to-global principle. Then $\scT/P$ satisfies the local-to-global-principle, for every $P\in \sfS(\scT)$.
\end{prop}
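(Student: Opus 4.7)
The plan is to transfer the local-to-global principle from $\scT$ to $\scT/P$ by pushing the generating relation through the tt-quotient $j_P$, using the identification of Rickard idempotents in $\scT/P$ with those in $\scT$ supported on $V_P$.

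Let $Y\in \scT/P$. Since $j_P$ is essentially surjective, choose $X\in \scT$ with $j_P(X)\cong Y$. By the local-to-global principle for $\scT$, we have $X\in \loc(\gG_Q\ot X\mid Q\in \Spcs)$. Applying \Cref{lem:image-of-ideal} to the coproduct-preserving tt-functor $j_P\colon \scT\to \scT/P$ gives
\[
j_P(X)\in \loc\parens{j_P(\gG_Q\ot X)}{Q\in \Spcs}=\loc\parens{j_P(\gG_Q)\ot j_P(X)}{Q\in \Spcs},
\]
using that $j_P$ is monoidal. By \Cref{lem:vanishing-of-rickards}, $j_P(\gG_Q)=0$ whenever $P\nsubseteq Q$, so the non-zero generators come only from $Q\in V_P$, which we identify with $\Spcsl{P}$. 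For $Q\in V_P$, the Rickard idempotent of $\scT/P$ at the point $Q$ is precisely $j_P(\gG_Q)$, as recorded in the discussion preceding \Cref{lem:vanishing-of-rickards}. Writing $\gG'_Q$ for this Rickard idempotent in $\scT/P$, we therefore obtain
\[
Y=j_P(X)\in \loc\parens{\gG'_Q\ot Y}{Q\in \Spcsl{P}},
\]
and hence $\loc(Y)\subseteq \loc\parens{\gG'_Q\ot Y}{Q\in \Spcsl{P}}$.

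The reverse inclusion is immediate, since each $\gG'_Q\ot Y$ lies in the localizing ideal generated by $Y$. Combining both inclusions gives $\loc(Y)=\loc\parens{\gG'_Q\ot Y}{Q\in \Spcsl{P}}$, which is the local-to-global principle for $\scT/P$. There is no real obstacle here; the proof reduces entirely to (i) essentially surjectivity of $j_P$, (ii) the monoidality of $j_P$ coupled with \Cref{lem:image-of-ideal} to push the generation statement through the quotient, and (iii) the identification of the non-trivial images of Rickard idempotents via \Cref{lem:vanishing-of-rickards}.
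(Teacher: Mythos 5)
Your proof is correct and takes essentially the same route as the paper: push the generating relation through $j_P$ via \Cref{lem:image-of-ideal}, discard the vanishing Rickard idempotents via \Cref{lem:vanishing-of-rickards}, and identify the survivors with the Rickard idempotents of $\scT/P$. The only (immaterial) difference is that the paper applies this argument just to $X=1_\scT$ and then invokes the equivalence, noted after \Cref{defn:ltg-min}, between the local-to-global principle and the single relation $\scT=\loc\paren{\gG_P}{P\in\Spcs}$, whereas you run the argument for an arbitrary object $Y\in\scT/P$ directly; both are valid.
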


\begin{proof}
Since $\scT$ satisfies the local-to-global principle, $1_\scT\in \loc\paren{\gG_Q}{Q\in \Spcs}$. So, $1_{\scT/P}=j_P(1_\scT)\in \loc\paren{j_P(\gG_Q)}{Q\in \Spcs}=\loc\parens{j_P(\gG_Q)}{Q\in V_P}$. Here we used~\Cref{lem:image-of-ideal} and~\Cref{lem:vanishing-of-rickards}. Thus, $\scT/P=\loc\parens{j_P(\gG_Q)}{Q\in V_P}$, meaning that $\scT/P$ satisfies the local-to-global principle. 
\end{proof}

\begin{lem}
\label{lem:inverse-image-of-ideal-generated-by-image}%
Let $P$ be a smashing ideal of $\scT$. Then
\[
j_P^{-1}(\loc(j_P(X)))=\loc(e_P,X),
\]
where $j_P\colon \scT\to \scT/P$ is the quotient functor.
\end{lem}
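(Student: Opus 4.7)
The plan is to prove both inclusions of the asserted equality separately, with the forward one being essentially formal and the reverse one relying on the $\scT$-linearity of the right adjoint $k_P$.

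For $\loc(e_P, X) \subseteq j_P^{-1}(\loc(j_P(X)))$, I would simply observe that $j_P^{-1}(\loc(j_P(X)))$ is a localizing ideal of $\scT$ (being the preimage of a localizing ideal under the coproduct-preserving tt-functor $j_P$) and that it contains both generators: $e_P \in P = \Ker(j_P)$, so $j_P(e_P) = 0 \in \loc(j_P(X))$, and $j_P(X) \in \loc(j_P(X))$ tautologically.

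For the reverse inclusion $j_P^{-1}(\loc(j_P(X))) \subseteq \loc(e_P, X)$, the main tool is the localization triangle $e_P \otimes Y \to Y \to f_P \otimes Y$ combined with the canonical isomorphism $f_P \otimes Y \cong k_P j_P(Y)$. Given $Y$ with $j_P(Y) \in \loc(j_P(X))$, I would apply $k_P$ and use that, by the projection formula in~\Cref{rem:ideals-of-quotient}, $k_P$ is $\scT$-linear in the sense of~\Cref{rem:gen-image-of-ideal}. This yields
\[
k_P(\loc(j_P(X))) \subseteq \loc(k_P j_P(X)) = \loc(f_P \otimes X) \subseteq \loc(X),
\]
so $k_P j_P(Y) \in \loc(X)$. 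Since $e_P \otimes Y \in \loc(e_P)$ (as $\loc(e_P)$ is an ideal), the localization triangle forces $Y \in \loc(e_P, X)$, as required.

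The main obstacle, such as it is, lies in justifying the step $k_P(\loc(j_P(X))) \subseteq \loc(k_P j_P(X))$: one has to know that $\loc(j_P(X))$ inside $\scT/P$ agrees with the smallest localizing subcategory of $\scT/P$ that is closed under the $\scT$-action transported across $j_P$, so that~\Cref{rem:gen-image-of-ideal} applies. This is immediate because $j_P$ is essentially surjective and monoidal, so any localizing ideal of $\scT/P$ is automatically stable under the induced $\scT$-action. All the remaining manipulations are formal consequences of the idempotent triangle for $P$ and the elementary closure properties of $\loc(-)$.
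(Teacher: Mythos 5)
Your proof is correct and matches the paper's in all essential respects: both treat the forward inclusion as formal (noting $j_P(e_P)=0$), and both prove the reverse inclusion using $k_P j_P \cong f_P\otimes -$, the idempotent triangle for $P$, and the projection formula to see that $k_P$ carries $\loc(j_P(X))$ into $\loc(e_P,X)$. The only difference is one of packaging: the paper argues via preimages, showing $\loc(j_P(X))\subseteq k_P^{-1}(\loc(e_P,X))$ and then evaluating $(k_P j_P)^{-1}(\loc(e_P,X))=\loc(e_P,X)$ by \Cref{rem:inverse-image-localization-functor}, whereas you push forward under $k_P$ into $\loc(X)$ and then recover $Y$ from the triangle $e_P\otimes Y\to Y\to f_P\otimes Y$.
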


\begin{proof}
It is clear that $\loc(e_P,X)\subseteq j_P^{-1}(\loc(j_P(X)))$. Since $k_Pj_P(X)\cong f_P\ot X$, it follows that $j_P(X)\in k_P^{-1}(\loc(e_P,X))$. Thus, $\loc(j_P(X))\subseteq k_P^{-1}(\loc(e_P,X))$. Therefore, $j_P^{-1}(\loc(j_P(X)))\subseteq (k_Pj_P)^{-1}(\loc(e_P,X))=\loc(e_P,X)$, with the last equality by~\Cref{rem:inverse-image-localization-functor}.
\end{proof}

\begin{rem}
\label{rem:Rickard-left-idem}%
Let $j_P\colon \scT \to \scT/P$ be the quotient functor, where $P$ is a meet-prime smashing ideal.~Let $\scS\in \sfS(\scT)$ such that $U_\scS\cap V_P=\{P\}$.~Applying $j_P$ to the idempotent triangle corresponding to $P$ yields $j_P(e_P)\to 1_{\scT/P} \to j_P(f_P)$. It follows that $j_P(f_P)\cong 1_{\scT/P}$, so $j_P(\gG_P)\cong j_P(e_\scS)$. Conclusion: $j_P(\gG_P)$ is a left idempotent.
\end{rem}

\begin{prop}
\label{prop:minimality-reduction}%
Let $P$ be a meet-prime smashing ideal of $\scT$. Then $\loc(\gG_P)$ is minimal in $\Loc(\scT)$ if and only if $\loc(j_P(\gG_P))$ is minimal in $\Loc(\scT/P)$.
\end{prop}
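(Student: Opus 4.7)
The plan is to transfer minimality across the quotient using the standard dictionary between $\Loc(\scT/P)$ and localizing ideals of $\scT$ containing $P$. Two small observations will do almost all the work: first, $f_P\ot \gG_P \cong \gG_P$ (since $\gG_P=e_\scS\ot f_P$ for a suitable $\scS$ and $f_P$ is idempotent) and $e_P\ot \gG_P=0$, so $\loc(\gG_P)\subseteq \Ker(e_P\ot-)$; second, by \Cref{rem:Rickard-left-idem} we have $j_P(f_P)\cong 1_{\scT/P}$, so in particular $j_P(f_P\ot X)\cong j_P(X)$ for every $X\in\scT$. Throughout I will use \Cref{lem:inverse-image-of-ideal-generated-by-image} to compute $j_P^{-1}(\loc(j_P(X)))=\loc(e_P,X)$ and \Cref{lem:image-of-ideal} to push generated localizing ideals forward under~$j_P$.

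($\Rightarrow$) Assume $\loc(\gG_P)$ is minimal in $\Loc(\scT)$ and let $\scN\subseteq\loc(j_P(\gG_P))$ be a non-zero localizing ideal of $\scT/P$. Pick $0\neq Y\in\scN$ and set $X=k_P(Y)$, so that $Y\cong j_P(X)$ and $f_P\ot X\cong k_Pj_P(X)\cong k_P(Y)\neq 0$ (using that $k_P$ is fully faithful). Since $X\in j_P^{-1}(\scN)\subseteq j_P^{-1}(\loc(j_P(\gG_P)))=\loc(e_P,\gG_P)$, tensoring with $f_P$ gives $f_P\ot X\in \loc(f_P\ot e_P,\, f_P\ot \gG_P)=\loc(\gG_P)$ because $f_P\ot e_P=0$ and $f_P\ot \gG_P\cong \gG_P$. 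Minimality of $\loc(\gG_P)$ now yields $\gG_P\in\loc(f_P\ot X)$, so applying $j_P$ and using $j_P(f_P\ot X)\cong j_P(X)$ gives $j_P(\gG_P)\in\loc(j_P(X))\subseteq\scN$. Hence $\loc(j_P(\gG_P))\subseteq\scN$ and we get equality, proving minimality in the quotient.

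($\Leftarrow$) Assume $\loc(j_P(\gG_P))$ is minimal in $\Loc(\scT/P)$ and take $0\neq X\in\loc(\gG_P)$; the goal is $\gG_P\in\loc(X)$. Since $\loc(\gG_P)\subseteq\Ker(e_P\ot-)$, the idempotent triangle shows $f_P\ot X\cong X\neq 0$, so $j_P(X)\cong k_P^{-1}(f_P\ot X)$ is non-zero. By \Cref{lem:image-of-ideal}, $j_P(X)\in\loc(j_P(\gG_P))$, and minimality in $\scT/P$ gives $j_P(\gG_P)\in\loc(j_P(X))$. Pulling back via \Cref{lem:inverse-image-of-ideal-generated-by-image} yields $\gG_P\in\loc(e_P,X)$, and tensoring with $f_P$ gives $\gG_P\cong f_P\ot \gG_P\in\loc(f_P\ot e_P,\,f_P\ot X)=\loc(f_P\ot X)\subseteq\loc(X)$, as desired.

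The only subtle points are the ones already flagged: the passage from $X\in\loc(\gG_P)$ non-zero to $j_P(X)\neq 0$ (handled by $\loc(\gG_P)\subseteq \Ker(e_P\ot -)$), and the surjectivity-plus-fully-faithfulness trick that lets us choose a non-zero preimage of an arbitrary non-zero object of $\scN$ in the forward direction; after these, everything reduces to the formal computations $f_P\ot \gG_P=\gG_P$ and $f_P\ot e_P=0$.
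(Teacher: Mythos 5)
Your proof is correct and follows essentially the same route as the paper's: both directions transfer minimality through \Cref{lem:inverse-image-of-ideal-generated-by-image} and \Cref{lem:image-of-ideal}, then kill the extraneous generator $e_P$ by tensoring with a suitable idempotent using $e_P\ot f_P=0$. The paper obtains the non-vanishing in the forward direction from the left-idempotent property of $j_P(\gG_P)$ (\Cref{rem:Rickard-left-idem}) rather than your choice $X=k_P(Y)$ with $k_P$ fully faithful, and tensors with $\gG_P$ instead of $f_P$ at the end of the backward direction, but these are cosmetic variants of the same argument.
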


\begin{proof}
$(\Rightarrow)$ Suppose that $\loc(\gG_P)$ is minimal and let $X\in \loc(j_P(\gG_P))$ be a non-zero object. Since $j_P(\gG_P)$ is a left idempotent, $j_P(\gG_P)\ot X\neq 0$. Write $X=j_P(Y)$, for some $Y\in \scT$. Then $j_P(\gG_P\ot Y)=j_P(\gG_P)\ot X\neq 0$, thus $\gG_P\ot Y\neq 0$. It follows, by minimality of $\loc(\gG_P)$, that $\loc(\gG_P\ot Y)=\loc(\gG_P)$. Hence, $\gG_P\in \loc(Y)$. Invoking~\Cref{lem:image-of-ideal} for the functor $j_P$ results in $j_P(\gG_P)\in \loc(j_P(Y))=\loc(X)$. Conclusion: $\loc(j_P(\gG_P))$ is minimal.

$(\Leftarrow)$ Suppose that $\loc(j_P(\gG_P))$ is minimal. Then $P\subsetneq j_P^{-1}(\loc(j_P(\gG_P)))$, which is minimal over $P$. By~\Cref{lem:inverse-image-of-ideal-generated-by-image}, this reads $P\subsetneq \loc(e_P,\gG_P)$. Pick a non-zero object $X\in \loc(\gG_P)$. Since $\loc(\gG_P)\subseteq \loc(f_P)=\im(-\ot f_P)$, it follows that $X\ot f_P\cong X\neq 0$, i.e., $X\notin P$. This shows that $\loc(\gG_P)\cap  P=0$. Therefore, $P\subsetneq \loc(e_P,X)\subseteq \loc(e_P,\gG_P)$. Since $\loc(e_P,\gG_P)$ is minimal over $P$, it follows that $\loc(e_P,X)= \loc(e_P,\gG_P)$, so $\gG_P\in \loc(e_P,X)$. By~\Cref{lem:image-of-ideal}, $\gG_P\cong \gG_P\ot \gG_P\in \loc(\gG_P\ot e_P,\gG_P\ot X)=\loc(\gG_P\ot X)$. As a result, $\loc(\gG_P)=\loc(\gG_P\ot X)\subseteq \loc(X)$, so $\gG_P\in \loc(X)$. Conclusion: $\loc(\gG_P)$ is minimal.
\end{proof}

\begin{prop}
\label{prop:minimality-equivalence-quotients}%
The category $\scT$ satisfies minimality if and only if $\scT/P$ satisfies minimality, for every $P\in \Spcs$.
\end{prop}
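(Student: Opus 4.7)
The plan is to use \Cref{prop:minimality-reduction} as the main lever, applied twice, together with the standard compatibility of Rickard idempotents with smashing quotients. The key ancillary observation is that for any meet-prime $R \supseteq P$ in $\sfS(\scT)$, the quotient $R/P$ is a meet-prime smashing ideal of $\scT/P$; its Rickard idempotent in $\scT/P$ is $j_P(\gG_R)$ (as stated in the text preceding \Cref{lem:vanishing-of-rickards}); and the Verdier third isomorphism theorem yields an equivalence $(\scT/P)/(R/P) \simeq \scT/R$ under which $j_{R/P} \circ j_P$ corresponds to $j_R$.

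The backward direction will be essentially immediate. Assuming $\scT/P$ satisfies minimality for every $P \in \Spcs$, I fix $Q \in \Spcs$ and specialize to $P = Q$. Since $Q \in V_Q = \Spcsl{Q}$, the hypothesis guarantees that $\loc(j_Q(\gG_Q))$ is minimal in $\Loc(\scT/Q)$, and \Cref{prop:minimality-reduction} then delivers minimality of $\loc(\gG_Q)$ in $\Loc(\scT)$.

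For the forward direction, I assume $\scT$ satisfies minimality, fix $P \in \Spcs$, and consider an arbitrary point of $\Spcsl{P}$, necessarily of the form $R/P$ for some meet-prime $R \in V_P$. The goal is minimality of $\loc(j_P(\gG_R))$ in $\Loc(\scT/P)$. First I apply \Cref{prop:minimality-reduction} inside $\scT/P$ to the meet-prime $R/P$, converting the problem into minimality of $\loc(j_{R/P}(j_P(\gG_R)))$ in $\Loc((\scT/P)/(R/P))$. Using the identifications above, this is the same as minimality of $\loc(j_R(\gG_R))$ in $\Loc(\scT/R)$. A second application of \Cref{prop:minimality-reduction}, this time to the meet-prime $R$ viewed in $\scT$ itself, translates it into minimality of $\loc(\gG_R)$ in $\Loc(\scT)$, which holds by hypothesis.

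The hard part is not conceptual but rather the auxiliary bookkeeping --- specifically, verifying that the bijection $\sfS(\scT/P) \leftrightarrow \{\scS \in \sfS(\scT) \mid P \subseteq \scS\}$ preserves meet-primes, that $j_P$ sends each Rickard idempotent of a point in $V_P$ to the Rickard idempotent of the corresponding point in $\Spcsl{P}$, and that iterated Verdier quotients compose canonically to a single quotient. All of these are routine facts already tacit in the paper, but they must be pinned down cleanly so that the two invocations of \Cref{prop:minimality-reduction} can be chained without ambiguity.
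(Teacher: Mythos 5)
Your proof is correct, and it takes a genuinely different route from the paper's. Both you and the paper dispose of the backward direction identically (specialize to $P=Q$ and apply \Cref{prop:minimality-reduction}), and both use \Cref{prop:minimality-reduction} as the workhorse for the forward direction. The difference is in how. The paper applies \Cref{prop:minimality-reduction} exactly once (to $Q\in\Spcs$ itself) to convert the hypothesis into the statement that $\loc(e_Q,\gG_Q)$ is minimal over $Q$, and then runs a direct verification: it takes a nonzero $j_P(X)\in\loc(j_P(\gG_Q))$, deduces $X\notin Q$ from $\loc(j_P(\gG_Q))\cap j_P(Q)=0$, uses minimality over $Q$ to get $\loc(e_Q,X)=\loc(e_Q,\gG_Q)$, passes through \Cref{lem:supp-gen} to conclude $\gG_Q\ot X\neq 0$, and finishes with minimality of $\loc(\gG_Q)$. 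You instead apply \Cref{prop:minimality-reduction} twice --- once inside $\scT/P$ at the meet-prime $R/P$ and once inside $\scT$ at $R$ --- and bridge the two with the Verdier third isomorphism $(\scT/P)/(R/P)\simeq\scT/R$, matching $j_{R/P}\circ j_P$ with $j_R$. Your approach buys a cleaner conceptual picture: minimality of $\loc(j_P(\gG_R))$ in $\Loc(\scT/P)$ and minimality of $\loc(\gG_R)$ in $\Loc(\scT)$ are each equivalent (via \Cref{prop:minimality-reduction}) to minimality of $\loc(j_R(\gG_R))$ in $\Loc(\scT/R)$, with no support computation needed. The paper's route avoids invoking the third isomorphism theorem here, but in fact the paper uses exactly that identification in the proof of \Cref{prop:closed-covers} immediately afterward, so the fact is indeed tacitly available; your proof essentially reorganizes the paper's own ingredients so that the chain of equivalences is made explicit rather than unwound by hand. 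The bookkeeping you flag --- that the identification $\Spcsl{P}\cong V_P$ is meet-prime-preserving, that the Rickard idempotent of $R/P$ is $j_P(\gG_R)$, and that iterated quotients compose --- is all stated or used in the paper and poses no issue.
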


\begin{proof}
Suppose that $\scT$ satisfies minimality. Let $P\in \Spcs$ and $Q\in \Spcsl{P}$, i.e., $Q\in V_P$ and consider a non-zero object $j_P(X)\in \loc(j_P(\gG_Q))$. The fact that $\loc(j_P(\gG_Q))\cap j_P(Q)=0$ leads to $X\notin Q$. Now ponder the quotient $j_Q\colon \scT\to \scT/Q$. The ideal $\loc(\gG_Q)$ is minimal by assumption, so by~\Cref{prop:minimality-reduction}, $\loc(j_Q(\gG_Q))$ is minimal. Equivalently, $\loc(e_Q,\gG_Q)$ is minimal over $Q$. Since $X\notin Q$, it follows that $\loc(e_Q,X)=\loc(e_Q,\gG_Q)$. Invoking~\Cref{lem:supp-gen} yields $\supps(e_Q)\cup \supps(X)=\supps(e_Q)\cup \supps(\gG_Q)$. Thus, $U_Q\cup \supps(X)=U_Q\cup \{Q\}$. Since $Q\notin U_Q$, we infer that $Q\in \supps(X)$, which means that $\gG_Q\ot X\neq 0$. Further, minimality of $\loc(\gG_Q)$ implies that $\gG_Q\in \loc(X)$. In consequence, $j_P(\gG_Q)\in \loc(j_P(X))$, proving that $\loc(j_P(\gG_Q))$ is minimal. The converse implication is given by~\Cref{prop:minimality-reduction}.
\end{proof}

\begin{rem}
The right-hand implication in~\Cref{prop:minimality-equivalence-quotients} holds without assuming that $P$ is necessarily meet-prime.
\end{rem}
\begin{cor}
\label{prop:minimality-everywhere}%
Suppose that $\scT$ satisfies the local-to-global principle. Then $\scT$ is stratified by the small smashing support if and only if $\scT/P$ is stratified by the small smashing support, for every $P\in \Spcs$.
\end{cor}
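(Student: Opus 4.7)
The plan is to reduce the statement entirely to results that have already been established in the excerpt, namely the characterization of stratification via the local-to-global principle and minimality (\Cref{prop:strat-class}), the transfer of the local-to-global principle to smashing quotients (\Cref{prop:ltg-quotients}), and the equivalence of minimality for $\scT$ and for all $\scT/P$ with $P \in \Spcs$ (\Cref{prop:minimality-equivalence-quotients}). No new computation should be needed.

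First, I would invoke \Cref{prop:strat-class} to replace the property ``stratified by the small smashing support'' on both sides by its two constituent ingredients: the local-to-global principle and minimality. The local-to-global hypothesis on $\scT$ is already assumed, and \Cref{prop:ltg-quotients} propagates it to every smashing quotient $\scT/P$ for $P \in \sfS(\scT)$, in particular for every $P \in \Spcs$. Consequently, the local-to-global clause on each side is free, and the entire statement collapses to showing that $\scT$ satisfies minimality if and only if $\scT/P$ satisfies minimality for every meet-prime smashing ideal $P$.

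For the forward direction, assume $\scT$ is stratified. Then $\scT$ satisfies minimality, and \Cref{prop:minimality-equivalence-quotients} yields minimality of $\scT/P$ for every $P \in \Spcs$; combined with the local-to-global principle for $\scT/P$ supplied by \Cref{prop:ltg-quotients}, \Cref{prop:strat-class} delivers stratification of $\scT/P$. For the converse, assume that each $\scT/P$ is stratified, hence satisfies minimality by \Cref{prop:strat-class}. Applying \Cref{prop:minimality-equivalence-quotients} in the other direction gives minimality of $\scT$, and together with the standing local-to-global hypothesis another application of \Cref{prop:strat-class} produces stratification of $\scT$.

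No step here is a genuine obstacle, since all the real work has already been carried out in the preceding propositions; the only point requiring even mild attention is making sure the local-to-global hypothesis really is indispensable (as stressed in the introduction) and is used on the correct side. In particular, in the converse direction, one uses that the local-to-global principle is assumed of $\scT$ itself, not merely of each $\scT/P$, so that \Cref{prop:strat-class} can be applied to $\scT$ to conclude stratification once minimality is in hand.
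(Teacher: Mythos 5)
Your proposal is correct and follows exactly the paper's own route: the paper's proof is just ``Combine \Cref{prop:ltg-quotients} and \Cref{prop:minimality-equivalence-quotients}'' (implicitly via \Cref{prop:strat-class}), which is precisely the decomposition you carried out. Your additional remark about where the local-to-global hypothesis is genuinely needed matches the paper's surrounding commentary.
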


\begin{proof}
Combine~\Cref{prop:ltg-quotients} and~\Cref{prop:minimality-equivalence-quotients}.
\end{proof}

\begin{rem}
\label{rem:strat-converse-counterexample}%
If $\scT$ is not assumed to a-priori satisfy the local-to-global principle, the converse of~\Cref{prop:minimality-everywhere} does not hold in general. For instance, if $R$ is an absolutely flat ring that is not semi-artinian, then $\sfD(R)$ does not satisfy the local-to-global principle, even though its localizations $\sfD(k(\mfp))$ (where $k(\mfp)$ is the residue field at $\mfp \in \Spec(R)$) are stratified; see~\cite[Theorem~4.7]{Stevenson14}.
\end{rem}

\subsection{Stratification and closed covers}
\begin{prop}\label{prop:ltg-finite-cover}%
Suppose that $\Spcs=\bigcup V_{\scS_i}$, where $\{\scS_i\}$ is a finite set of smashing ideals, and assume that each $\scT/\scS_i$ satisfies the local-to-global principle. Then $\scT$ satisfies the local-to-global principle.
\end{prop}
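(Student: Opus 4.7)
The plan is to convert the hypothesis $\Spcs = \bigcup_i V_{\scS_i}$ into a tensor-vanishing statement via Stone duality and then use the local-to-global principle in each quotient $\scT/\scS_i$ to witness $X$ inside $\scM := \loc(\gG_P \ot X : P \in \Spcs)$ by a backwards induction along a filtration built from the left idempotents $e_{\scS_i}$. Since $\{\scS_i\}_{i=1}^n$ is finite, the meet in $\sfS(\scT)$ coincides with the intersection, and spatiality together with the cover gives $V_{\bigcap_i \scS_i} = \bigcup_i V_{\scS_i} = \Spcs$; hence $\bigcap_i \scS_i = 0$, which forces $e_{\scS_1} \ot \cdots \ot e_{\scS_n} = 0$.

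Fix $X \in \scT$ and set $Y_k = e_{\scS_1} \ot \cdots \ot e_{\scS_k} \ot X$ for $k = 0, \ldots, n$, so $Y_0 = X$ and $Y_n = 0 \in \scM$. Tensoring the idempotent triangle of $\scS_{k+1}$ with $Y_k$ yields
\[
Y_{k+1} \to Y_k \to f_{\scS_{k+1}} \ot Y_k,
\]
so it suffices to prove $f_{\scS_{k+1}} \ot Y_k \in \scM$ for each $k$, whereupon backwards induction from $Y_n \in \scM$ forces $Y_0 = X \in \scM$.

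To verify this, I would apply the local-to-global principle in $\scT/\scS_{k+1}$ to $j_{k+1}(Y_k)$, using the identification $\Spcsl{\scS_{k+1}} = V_{\scS_{k+1}}$ and the fact that the Rickard idempotent at $Q \in V_{\scS_{k+1}}$ is $j_{k+1}(\gG_Q)$. This yields
\[
j_{k+1}(Y_k) \in \sloc\big(j_{k+1}(\gG_Q \ot Y_k) : Q \in V_{\scS_{k+1}}\big).
\]
Since $k_{k+1}$ is a coproduct-preserving triangulated functor, applying it (and invoking the argument of~\Cref{lem:image-of-ideal} at the level of localizing subcategories) gives
\[
f_{\scS_{k+1}} \ot Y_k = k_{k+1} j_{k+1}(Y_k) \in \sloc\big(f_{\scS_{k+1}} \ot \gG_Q \ot Y_k : Q \in V_{\scS_{k+1}}\big).
\]

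The heart of the proof is the computation of each generator. For $Q \in V_{\scS_{k+1}}$ one has $e_{\scS_{k+1}} \in Q$, whence $e_{\scS_{k+1}} \ot f_Q = 0$ and so $f_{\scS_{k+1}} \ot \gG_Q \cong \gG_Q$. For each $j \leq k$, either $\scS_j \subseteq Q$, giving $e_{\scS_j} \ot f_Q = 0$ and hence $\gG_Q \ot e_{\scS_j} = 0$, or $\scS_j \nsubseteq Q$, in which case choosing $\scS'$ with $\{Q\} = U_{\scS'} \cap V_Q$ and refining to $\{Q\} = U_{\scS_j \cap \scS'} \cap V_Q$ (cf.~\Cref{rem:smaller-neighborhoods-Rickards}) yields $\gG_Q \ot e_{\scS_j} \cong \gG_Q$. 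Thus $f_{\scS_{k+1}} \ot \gG_Q \ot Y_k$ equals either $0$ or $\gG_Q \ot X$, both of which lie in $\scM$. The main obstacle is this tensor bookkeeping with the Rickard idempotents; once in hand, the finite induction concludes the proof.
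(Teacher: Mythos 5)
Your proof is correct, but it takes a genuinely different route from the paper's. The paper restricts to $X=1_\scT$ (which suffices, since the local-to-global principle for the unit implies it for all objects by tensoring), reduces by an easy induction to the case of two smashing ideals, and glues the two pieces together using the Mayer--Vietoris triangle $f_{\scS_1\cap\scS_2}\to f_{\scS_1}\oplus f_{\scS_2}\to f_{\scS_1}\ot f_{\scS_2}$ of Balmer--Favi, with $\scS_1\cap\scS_2=0$ forcing $f_{\scS_1\cap\scS_2}=1$. You instead fix an arbitrary $X$, handle all $n$ ideals at once, and filter $X$ by the iterated acyclizations $Y_k=e_{\scS_1}\ot\cdots\ot e_{\scS_k}\ot X$, bottoming out at $Y_n=0$ because $e_{\scS_1}\ot\cdots\ot e_{\scS_n}$ is the left idempotent of $\bigcap_i\scS_i=0$. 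In both arguments the real work is done by the local-to-global principle in each $\scT/\scS_i$, pulled back along $k_{\scS_i}j_{\scS_i}=f_{\scS_i}\ot -$ to show $f_{\scS_i}\ot(-)$ of the relevant object lies in the ambient localizing ideal. Your filtration is, in effect, an iterated Mayer--Vietoris in disguise, but it avoids explicitly invoking the MV triangle and the reduction to $n=2$; the trade-off is somewhat heavier tensor bookkeeping for $f_{\scS_{k+1}}\ot\gG_Q\ot Y_k$ (correctly handled via the refinement trick of \Cref{rem:smaller-neighborhoods-Rickards}), where the paper gets by with the one-line observation $\gG_P\ot f_{\scS_i}\cong\gG_P$ for $P\in V_{\scS_i}$.
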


\begin{proof}
By an easy induction argument, it suffices to prove the statement in the case $\Spcs=V_{\scS_1}\cup V_{\scS_2}$. Consider the quotient functor $j_{\scS_i}\colon \scT \to \scT/\scS_i$, where $i=1,2$. Since $\scT/\scS_i$ satisfies the local-to-global principle and $\Spcsa{\scT/\scS_i}\cong V_{\scS_i}$, it holds that $j_{\scS_i}(1)=1\in \loc\parens{j_{\scS_i}(\gG_P)}{P\in V_{\scS_i}}$. By~\Cref{lem:inverse-image-of-ideal-generated-by-image}, it follows that $1\in \loc(e_{\scS_i},\{\gG_P\}_{P\in V_{\scS_i}})$. If $P\in V_{\scS_i}$, i.e., $P\supseteq \scS_i$, then $f_P\ot f_{\scS_i}\cong f_P$. Thus, $\gG_P\ot f_{\scS_i}\cong \gG_P$. Invoking~\Cref{lem:image-of-ideal}, we have $f_{\scS_i}\in \loc\parens{\gG_P}{ P\in V_{\scS_i}}$. Therefore, $f_{\scS_i}\in \loc\paren{\gG_P}{P\in \Spcs}$. So, $f_{\scS_1}\oplus f_{\scS_2},\ f_{\scS_1}\ot f_{\scS_2}\in \loc\paren{\gG_P}{P\in \Spcs}$. From the Mayer--Vietoris triangle $f_{\scS_1\cap \scS_2} \to f_{\scS_1}\oplus f_{\scS_2}\to f_{\scS_1}\ot f_{\scS_2}$, see~\cite[Theorem 3.13]{BalmerFavi11}, it follows that $f_{\scS_1\cap \scS_2}\in \loc\paren{\gG_P}{P\in \Spcs}$. Since $V_0=\Spcs=V_{\scS_1}\cup V_{\scS_2}=V_{\scS_1\cap \scS_2}$, we have $\scS_1\cap \scS_2=0$. Consequently, $f_{\scS_1\cap \scS_2}=1$. In conclusion, $1\in \loc\paren{\gG_P}{P\in \Spcs}$, which proves that $\scT$ satisfies the local-to-global principle.
\end{proof}

\begin{prop}\label{prop:closed-covers}%
Suppose that $\Spcs=\bigcup V_{\scS_i}$, where $\{\scS_i\}$ is a set of smashing ideals, and assume that each $\scT/\scS_i$ satisfies minimality. Then $\scT$ satisfies minimality.
\end{prop}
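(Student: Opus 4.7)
Fix $P\in \Spcs$; the task is to show that $\loc(\gG_P)$ is minimal in $\Loc(\scT)$. The idea is to transport the hypothesis ``$\scT/\scS_i$ satisfies minimality'' along a chain of two applications of \Cref{prop:minimality-reduction}: first inside a covering piece $\scT/\scS_i$ containing $P$, to pass from minimality in $\scT/\scS_i$ to minimality in the further quotient by $P$; and then inside $\scT$ itself, to lift minimality in $\scT/P$ back up to $\scT$.

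Since $\Spcs=\bigcup V_{\scS_i}$, choose an index $i$ with $P\in V_{\scS_i}$, equivalently $\scS_i\subseteq P$. Under the identification $\Spcsa{\scT/\scS_i}\cong V_{\scS_i}$ recorded just before \Cref{lem:vanishing-of-rickards}, the point $P$ corresponds to a meet-prime smashing ideal $\bar{P}$ of $\scT/\scS_i$, and its Rickard idempotent in $\scT/\scS_i$ is precisely $j_{\scS_i}(\gG_P)$. By the minimality assumption on $\scT/\scS_i$, the localizing ideal $\loc(j_{\scS_i}(\gG_P))$ is therefore minimal in $\Loc(\scT/\scS_i)$.

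Now apply \Cref{prop:minimality-reduction} in the big tt-category $\scT/\scS_i$ to the meet-prime smashing ideal $\bar{P}$. Because $\scS_i\subseteq P$, the quotient functor $j_P\colon \scT\to \scT/P$ factors uniquely as $j_P=\bar{\jmath}_{\bar{P}}\circ j_{\scS_i}$, where $\bar{\jmath}_{\bar{P}}\colon \scT/\scS_i\to (\scT/\scS_i)/\bar{P}\simeq \scT/P$ is the quotient by $\bar P$; in particular $\bar{\jmath}_{\bar{P}}(j_{\scS_i}(\gG_P))\cong j_P(\gG_P)$. Thus \Cref{prop:minimality-reduction} upgrades the minimality of $\loc(j_{\scS_i}(\gG_P))$ in $\Loc(\scT/\scS_i)$ to the minimality of $\loc(j_P(\gG_P))$ in $\Loc(\scT/P)$.

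Finally, apply \Cref{prop:minimality-reduction} once more, this time in $\scT$ to the meet-prime smashing ideal $P$: minimality of $\loc(j_P(\gG_P))$ in $\Loc(\scT/P)$ is equivalent to minimality of $\loc(\gG_P)$ in $\Loc(\scT)$, which is what we wanted. The only non-formal point to be careful about is the identification $(\scT/\scS_i)/\bar{P}\simeq \scT/P$ together with the compatibility $\bar{\jmath}_{\bar{P}}\circ j_{\scS_i}\cong j_P$; this is the expected composition law for smashing (Bousfield) localizations, and it is exactly what matches the Rickard idempotent $j_{\scS_i}(\gG_P)$ in $\scT/\scS_i$ with $j_P(\gG_P)$ in $\scT/P$. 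Beyond that, the proof is a direct two-step application of \Cref{prop:minimality-reduction}.
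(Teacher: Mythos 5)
Your proof is correct. You apply \Cref{prop:minimality-reduction} twice directly (first inside $\scT/\scS_i$ at the point $\bar P$, then inside $\scT$ at $P$), tracking the minimality of the single localizing ideal $\loc(\gG_P)$ through the composed localization $\scT \twoheadrightarrow \scT/\scS_i \twoheadrightarrow \scT/P$. The paper instead invokes \Cref{prop:minimality-equivalence-quotients} twice: the forward direction to go from ``$\scT/\scS_i$ satisfies minimality'' to ``$\scT/P$ satisfies minimality,'' and then the reverse direction to go from ``$\scT/P$ satisfies minimality for all $P$'' to ``$\scT$ satisfies minimality.'' These two routes are closely related --- the reverse direction of \Cref{prop:minimality-equivalence-quotients} is just \Cref{prop:minimality-reduction} restated --- but yours is slightly more economical: the forward direction of \Cref{prop:minimality-equivalence-quotients} asserts minimality of $\loc(j_P(\gG_Q))$ for \emph{all} $Q\supseteq P$, and its proof requires an extra argument beyond \Cref{prop:minimality-reduction}, whereas for this proposition only the single case $Q=P$ is ever used. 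By working at that one point you bypass that extra input. All the ingredients you invoke are supplied by the paper: the identification $\Spcsa{\scT/\scS_i}\cong V_{\scS_i}$, the fact that the Rickard idempotent of $\bar P$ in $\scT/\scS_i$ is $j_{\scS_i}(\gG_P)$, and the composition law $(\scT/\scS_i)/\bar P\simeq\scT/P$ with $j_P\cong\bar\jmath_{\bar P}\circ j_{\scS_i}$.
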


\begin{proof}
By assumption, if $P\in \Spcs$, then $P$ lies in some $V_{\scS_i}$. In other words, there exists some $\scS_i$ such that $\scS_i\subseteq P$. The category $\scT/P$ can be realized as a localization of $\scT/\scS_i$, as in the following commutative diagram:
\[
\begin{tikzcd}[row sep=4em]
\scT \rar[two heads] \dar[two heads] & \scT/\scS_i \dar[two heads] \dlar[two heads]
\\
\scT/P \rar[phantom,"\simeq"] & (\scT/\scS_i)/j_{\scS_i}(P).
\end{tikzcd}
\]
Since $\scT/\scS_i$ satisfies minimality, it follows by~\Cref{prop:minimality-equivalence-quotients} that $\scT/P$ satisfies minimality. In conclusion, $\scT/P$ satisfies minimality, for every $P\in \Spcs$. So, again by~\Cref{prop:minimality-equivalence-quotients}, $\scT$ satisfies minimality.
\end{proof}

\begin{cor}\label{cor:strat-closed-covers}%
Suppose that $\Spcs=\bigcup V_{\scS_i}$, where $\{\scS_i\}$ is a finite set of smashing ideals, and assume that each $\scT/\scS_i$ is stratified by the small smashing support. Then $\scT$ is stratified by the small smashing support.
\end{cor}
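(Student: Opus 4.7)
The plan is to reduce this to a direct application of the characterization of stratification in \Cref{prop:strat-class} together with the two propositions immediately preceding the corollary, which handle the local-to-global principle and minimality separately under closed covers.

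First I would unpack the hypothesis that each $\scT/\scS_i$ is stratified by the small smashing support. By the $(\Rightarrow)$ direction of \Cref{prop:strat-class}, each $\scT/\scS_i$ satisfies both the local-to-global principle and minimality. The goal is to promote these two conditions from the pieces $\scT/\scS_i$ back to $\scT$.

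Next I would invoke \Cref{prop:ltg-finite-cover}: since the cover $\Spcs=\bigcup V_{\scS_i}$ is \emph{finite} and each quotient $\scT/\scS_i$ satisfies the local-to-global principle, it follows that $\scT$ itself satisfies the local-to-global principle. In parallel, \Cref{prop:closed-covers} applied to the same (not necessarily finite) cover, together with the fact that each $\scT/\scS_i$ satisfies minimality, shows that $\scT$ satisfies minimality. Putting these two facts together and applying the $(\Leftarrow)$ direction of \Cref{prop:strat-class} yields that $\scT$ is stratified by the small smashing support, completing the proof.

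There is no real obstacle here; the corollary is essentially a bookkeeping combination of the previous three results. The only subtlety worth flagging is that the finiteness of the cover is used only through \Cref{prop:ltg-finite-cover} (whose proof relies on an induction reducing to a two-element cover and a Mayer--Vietoris triangle), whereas \Cref{prop:closed-covers} does not require finiteness. Consequently, if one further assumes that $\scT$ already satisfies the local-to-global principle, then the finiteness assumption on the cover can be dropped, recovering the second half of the statement advertised in the introduction.
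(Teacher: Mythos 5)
Your proof is correct and matches exactly how the paper intends the corollary to follow: the paper gives no separate argument, presenting \Cref{cor:strat-closed-covers} as an immediate consequence of \Cref{prop:ltg-finite-cover}, \Cref{prop:closed-covers}, and the characterization in \Cref{prop:strat-class}. Your side remark on where finiteness is (and is not) needed also agrees with the observation made in the introduction and in the remark following the corollary.
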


\begin{rem}
The case of the trivial cover $\Spcs=\bigcup_{P\in \Spcs}V_P$ in~\Cref{prop:closed-covers} recovers the statement of~\Cref{prop:minimality-equivalence-quotients}. In~\Cref{prop:ltg-finite-cover}, if $\{\scS_i\}$ is allowed to be an infinite set, then the most we can deduce is that $\scT/P$ satisfies the local-to-global principle, for every $P\in \Spcs$. As we have already seen in~\Cref{rem:strat-converse-counterexample}, this is not enough to guarantee that $\scT$ satisfies the local-to-global principle.
\end{rem}

\begin{rem}
Results of similar flavor appear in~\cite{BarthelHeardSanders21a}, where $\Spc$ is covered by complements of Thomason subsets and the notion of support considered is the one introduced in~\cite{BalmerFavi11}. Note that the cover $\{V_{\scS_i}\}$ of $\Spcs$ in the above results consists of closed subsets, which are complements of open subsets. This should not come as a surprise since, by Stone duality, smashing ideals of $\scT$ correspond to open subsets of $\Spcs$, while thick ideals of $\scT^\rmc$ correspond to Thomason subsets of $\Spc$.
\end{rem}

\section{Comparison maps}
\label{sec:comp}%

Let $\scT$ be a big tt-category such that~\Cref{hyp:spatial} holds and whose smashing spectrum is $T_D$ and assume that $\scT$ is stratified by the small smashing support. Let~$\scB$ be a homological prime. By~\cite[Lemma~5.2.1]{BalchinStevenson21}, $\chi(\scB)\coloneqq h^{-1}(\scB')=\Ker[-,I_\scB]$ is a big prime localizing ideal of $\scT$, where $\scB'$ is the unique maximal localizing ideal of $\Modc$ that contains $\scB$ and $I_\scB$ is the associated pure-injective weak ring; see~\Cref{cons:weak-rings}.~\Cref{cor:obj-big-primes} asserts that $\chi(\scB)=\Ker(\gG_P\ot -)$, for a unique $P\in \Spcs$. This produces a well-defined map $\xi\colon \Spch \to \Spcs$ that associates each $\scB\in \Spch$ with the unique $P\in \Spcs$ such that $\chi(\scB)=\Ker(\gG_P\ot -)$.

By construction, $\xi$ is the composite $\Spch\xr{\chi}\SPC\xr{\cong} \Spcs$, where the second map is the inverse of the map that takes $P\in \Spcs$ to $\Ker(\gG_P\ot -)$; see~\Cref{cor:spcs-objprimes}.
\begin{lem}
\label{lem:xi-inj}%
The map $\xi\colon \Spch\to \Spcs$ is injective.
\end{lem}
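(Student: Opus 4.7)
The plan is to factor $\xi$ as the composite
\[
\Spch \xrightarrow{\;\chi\;} \SPC \xrightarrow{\;\cong\;} \Spcs,
\]
where $\chi(\scB) = \Ker[-, I_\scB]$ and the second arrow is the inverse of the bijection of \Cref{cor:spcs-objprimes}. Since that bijection characterizes the unique $P$ with $\chi(\scB) = \Ker(\gG_P \otimes -)$, the injectivity of $\xi$ reduces to that of $\chi\colon \Spch \to \SPC$.

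To show $\chi$ is injective, I would assume $\chi(\scB_1) = \chi(\scB_2)$ and aim to deduce $\scB_1 = \scB_2$. Unpacking $\chi(\scB) = h^{-1}(\scB')$, combined with the natural isomorphism $[\wh{X}, \wh{I_\scB}] \cong \wh{[X, I_\scB]}$ (valid for every $X \in \scT$ since $I_\scB$ is pure-injective, together with $\wh{z} \otimes \wh{X} \cong \wh{z \otimes X}$ coming from Day convolution on representables) and conservativity of the restricted Yoneda functor, the hypothesis becomes the statement that $\wh{X} \in \scB_1' \Leftrightarrow \wh{X} \in \scB_2'$ for every $X \in \scT$, where $\scB_i' = \Ker[-, \wh{I_{\scB_i}}]$. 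Using $\scB_i = \scB_i' \cap \smodc$ (by maximality of $\scB_i$ inside $\smodc$), this already yields $\wh{x} \in \scB_1 \Leftrightarrow \wh{x} \in \scB_2$ for every $x \in \scT^\rmc$.

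To upgrade this to $\scB_1 = \scB_2$, I would take a presentation $\wh{y} \xrightarrow{\wh{\alpha}} \wh{x} \to M \to 0$ of a general $M \in \smodc$, complete $\alpha\colon y \to x$ to a triangle $y \to x \to z \to \Sigma y$ in $\scT^\rmc$, and apply first $[-, I_{\scB_i}]$ and then $\wh{(-)}$ (which is exact in the middle) to express $[M, \wh{I_{\scB_i}}]$ as the image of $\wh{[z, I_{\scB_i}]}$ inside $\wh{[x, I_{\scB_i}]}$. The condition $M \in \scB_i'$, equivalent to $[M, \wh{I_{\scB_i}}] = 0$, should then be reducible to vanishing conditions on objects of $\scT$ already controlled by $\chi(\scB_i)$, yielding $\scB_1 = \scB_2$. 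The main obstacle is executing this reduction cleanly when $[x, I_{\scB_i}]$ and $[z, I_{\scB_i}]$ are both non-zero: one must argue that the vanishing of the induced map $\wh{[z, I_{\scB_i}]} \to \wh{[x, I_{\scB_i}]}$ is controlled by $\chi(\scB_i)$ alone. If this direct route is problematic, an alternative is to prove that the pure-injective weak ring $I_\scB$ is determined up to isomorphism by the big prime $\chi(\scB)$, via the characterization of $\wh{I_\scB}$ as the injective envelope of the tensor-unit in $\Modc/\scB'$; the equality $\chi(\scB_1) = \chi(\scB_2)$ would then force $I_{\scB_1} \cong I_{\scB_2}$, hence $\scB_1' = \scB_2'$, and finally $\scB_1 = \scB_2$.
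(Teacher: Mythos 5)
Your first reduction --- that injectivity of $\xi$ follows from injectivity of $\chi\colon \Spch\to\SPC$ --- is exactly the paper's first step. But from there you diverge, and both of the routes you sketch for proving $\chi$ injective have genuine gaps that you yourself flag. In your direct route, the key step (``the vanishing of the induced map $\wh{[z, I_{\scB_i}]}\to\wh{[x,I_{\scB_i}]}$ is controlled by $\chi(\scB_i)$ alone'') is precisely the obstruction, and nothing you say overcomes it: a finitely presented module $M$ lying in $\scB_i'$ is a condition on a morphism of functors, not merely on whether certain objects of $\scT$ lie in $\chi(\scB_i)$. Your alternative route has the same problem one layer down: to recover $I_\scB$ as the injective envelope of the unit of $\Modc/\scB'$, you first need to know $\scB'$, but you only know $\chi(\scB)=h^{-1}(\scB')$, and since $h$ is far from essentially surjective onto $\Modc$, the inclusion $h^{-1}(\scB_1')=h^{-1}(\scB_2')$ does not obviously force $\scB_1'=\scB_2'$. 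So as written, neither branch closes.

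The paper's proof sidesteps all of this by invoking a black-box result of Balmer \cite[Corollary~4.9]{Balmer20b}: if $\scB$ and $\scC$ are \emph{distinct} homological primes, then $[I_\scB,I_\scC]=0$. Granting this, injectivity of $\chi$ is immediate: $[I_\scB,I_\scC]=0$ says $I_\scB\in\Ker[-,I_\scC]=\chi(\scC)$, while $I_\scB\notin\Ker[-,I_\scB]=\chi(\scB)$ because $I_\scB$ is a nonzero weak ring (so $[I_\scB,I_\scB]\neq 0$), whence $\chi(\scB)\neq\chi(\scC)$. That citation is the missing ingredient in your argument; with it, you do not need any analysis of presentations of finitely presented modules or any reconstruction of $\scB'$ from $\chi(\scB)$. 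You should look up and quote that ``orthogonality of homological residue fields'' statement rather than trying to rederive injectivity from scratch.
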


\begin{proof}
It suffices to show that $\chi \colon \Spch \to \SPC$ is injective. To this end, let $\scB$ and $\scC$ be two distinct homological primes. According to~\cite[Corollary~4.9]{Balmer20b}, $[I_\scB,I_\scC]=0$. Thus, $I_\scB\in \chi(\scC)$. Since $I_\scB\notin \chi(\scB)$, it follows that $\chi(\scB)\neq \chi(\scC)$.
\end{proof}

\begin{lem}
\label{lem:xi-cont}%
Let $X$ be an object of $\scT$. Then $\xi^{-1}(\supps(X))=\supph(X)$.
\end{lem}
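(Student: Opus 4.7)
The plan is to chase definitions through the construction of $\xi$. By definition, $\xi(\scB)=P$ is characterized by the equality $\chi(\scB)=\Ker(\gG_P\ot-)$, where $\chi(\scB)=\Ker[-,I_\scB]$. So a homological prime $\scB$ lies in $\xi^{-1}(\supps(X))$ precisely when $\gG_{\xi(\scB)}\ot X\neq 0$, which is the same as $X\notin \Ker(\gG_{\xi(\scB)}\ot-)=\chi(\scB)=\Ker[-,I_\scB]$, i.e., $[X,I_\scB]\neq 0$.

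On the homological side, by~\Cref{rem:supph-internal-hom}, we have $\supph(X)=\set{\scB\in \Spch}{[X,I_\scB]\neq 0}$. So the condition $[X,I_\scB]\neq 0$ is precisely $\scB\in \supph(X)$, and combining this with the previous chain of equivalences yields $\xi^{-1}(\supps(X))=\supph(X)$.

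The argument is essentially bookkeeping, so there is no real obstacle; the key is simply to invoke the characterization of $\chi(\scB)$ as $\Ker[-,I_\scB]$ from~\cite[Lemma~5.2.1]{BalchinStevenson21} (used to define $\xi$) together with the reformulation of $\supph$ in~\Cref{rem:supph-internal-hom}. A clean write-up is a single display of iff-equivalences starting from $\scB\in \xi^{-1}(\supps(X))$ and ending at $\scB\in \supph(X)$.
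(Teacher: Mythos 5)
Your proof is correct and matches the paper's proof essentially verbatim: both hinge on the identity $\chi(\scB)=\Ker[-,I_\scB]=\Ker(\gG_{\xi(\scB)}\ot -)$ and then unwind the definitions of $\supps$ and $\supph$ (via~\Cref{rem:supph-internal-hom}). Your write-up is just a slightly more explicit version of the same bookkeeping.
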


\begin{proof}
Let $\scB$ be a homological prime and consider the associated pure-injective weak ring $I_\scB\in \scT$. Then $\chi(\scB)=\Ker[-,I_\scB]=\Ker(\gG_{\xi(\scB)}\ot -)$. It follows from the definition of $\supps$ and $\supph$ that $\xi^{-1}(\supps(X))=\supph(X)$.
\end{proof}

\begin{rem}
\label{rem:compare-bf-strat}%
If $\Spc$ is weakly noetherian and $\scT$ is stratified by the Balmer--Favi support, then the comparison map $\phi\colon \Spch\to \Spc$ is a homeomorphism; see~\cite[Theorem~4.7]{BarthelHeardSanders21b}. In our case, we had to assume that $\scT$ is stratified by the small smashing support (with the analogous topological hypothesis being that $\Spcs$ is $T_D$) to even obtain the map $\xi\colon \Spch\to \Spcs$. This, however, is not enough to guarantee that $\xi$ is bijective. The failure of surjectivity of $\xi$ is measured by the ``kernel" of the homological support.
\end{rem}

\begin{prop}[cf. {\cite[Proposition~3.14]{BarthelHeardSanders21b}}]
\label{prop:xi-surj-supph}%
Let $\scT$ be a big tt-category whose smashing spectrum is $T_D$ and assume that $\scT$ is stratified by the small smashing support. The following are equivalent:
\begin{enumerate}[\rm(a)]
\item
\label{cond:xi-surj}%
$\xi$ is surjective.
\medbreak
\item
\label{cond:xi-direct-image}%
$\xi(\supph(X))=\supps(X),\ \forall X\in \scT$.
\medbreak
\item
\label{cond:supph-vanishing}%
$\supph$ detects vanishing of objects.
\end{enumerate}
\end{prop}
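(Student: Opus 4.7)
The plan is to verify the three implications (a)$\Rightarrow$(b)$\Rightarrow$(c)$\Rightarrow$(a), leveraging the identity $\xi^{-1}(\supps(X))=\supph(X)$ established in~\Cref{lem:xi-cont} as the main tool.

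For (a)$\Rightarrow$(b), I would simply observe that for any surjective map $f\colon A\to B$ and any subset $W\subseteq B$ one has $f(f^{-1}(W))=W$. Applying this with $f=\xi$ and $W=\supps(X)$, together with~\Cref{lem:xi-cont}, yields $\xi(\supph(X))=\xi(\xi^{-1}(\supps(X)))=\supps(X)$.

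For (b)$\Rightarrow$(c), the argument passes through the fact that, since $\scT$ is stratified by the small smashing support,~\Cref{prop:strat-class} gives the local-to-global principle, and then~\Cref{rem:supps-vanishing} shows that $\supps$ detects vanishing. So if $\supph(X)=\varnothing$, then $\supps(X)=\xi(\supph(X))=\varnothing$ by hypothesis, forcing $X=0$.

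The crux of the argument is (c)$\Rightarrow$(a), and this is where I expect the most care is needed. Given $P\in \Spcs$, I would exhibit a preimage under $\xi$ as follows. Since $\Spcs$ is $T_D$, the Rickard idempotent $\gG_P$ is nonzero (see~\Cref{rem:smaller-neighborhoods-Rickards}). By hypothesis (c), $\supph(\gG_P)\neq \varnothing$, so I can select a homological prime $\scB\in \supph(\gG_P)$. By definition of $\xi$, we have $\chi(\scB)=\Ker(\gG_{\xi(\scB)}\ot -)$, and $\scB\in \supph(\gG_P)$ precisely means $\gG_P\notin \chi(\scB)$, i.e.\ $\gG_{\xi(\scB)}\ot \gG_P\neq 0$. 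On the other hand, distinct Rickard idempotents are tensor-orthogonal (as used in the proof of~\Cref{lem:form-ker}: if $P\neq Q$ then $\gG_P\ot \gG_Q=0$). Comparing forces $\xi(\scB)=P$, establishing surjectivity of $\xi$. The only subtlety to watch for is making sure the orthogonality of Rickard idempotents at distinct points is already established in the setup—which it is, via the choice of disjoint locally closed neighborhoods defining $\gG_P$ and $\gG_Q$—so this step should go through cleanly.
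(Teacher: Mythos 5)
Your proposal is correct and follows essentially the same route as the paper: (a)$\Rightarrow$(b) via $\xi(\xi^{-1}(\supps(X)))=\supps(X)$ and \Cref{lem:xi-cont}, (b)$\Rightarrow$(c) via \Cref{prop:strat-class} and \Cref{rem:supps-vanishing}, and (c)$\Rightarrow$(a) via $\gG_P\neq 0$ together with the identification $\xi^{-1}(\{P\})=\supph(\gG_P)$. Your version of (c)$\Rightarrow$(a) merely unpacks that identification through $\chi(\scB)=\Ker(\gG_{\xi(\scB)}\ot-)$ and tensor-orthogonality of distinct Rickard idempotents rather than citing \Cref{lem:xi-cont} with $X=\gG_P$ directly, and your (a)$\Rightarrow$(b) correctly notes that surjectivity alone suffices (where the paper invokes the full bijectivity)---both cosmetic differences.
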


\begin{proof}
$(\rma)\Rightarrow (\rmb)$
Suppose that $\xi$ is surjective. \Cref{lem:xi-inj} implies that $\xi$ is bijective. The statement now follows by applying $\xi$ to the relation obtained in~\Cref{lem:xi-cont}.

$(\rmb)\Rightarrow (\rmc)$
Suppose that $\supph(X)=\varnothing$. It follows, by the assumption we made in~\eqref{cond:xi-direct-image}, that $\supps(X)=\varnothing$. Since $\scT$ is stratified, by~\Cref{prop:strat-class}, $\scT$ satisfies the local-to-global principle; so, $\supps$ detects vanishing. As a result, $X=0$.

$(\rmc)\Rightarrow (\rma)$
As a special case of~\Cref{lem:xi-cont}, for $X=\gG_P$: $\xi^{-1}(\{P\})=\supph(\gG_P)$. Hence, $\xi$ is surjective if and only if $\supph(\gG_P)\neq \varnothing,\ \forall P\in \Spcs$. 
Since $\gG_P\neq$~$0$ and $\supph$ is assumed to detect vanishing, $\supph(\gG_P)\neq \varnothing$; proving that $\xi$ is surjective.
\end{proof}

\subsection{The four spectra}\label{subsec:4spec}%
We continue to assume that $\scT$ is a big tt-category whose smashing spectrum is $T_D$ and that $\scT$ is stratified by the small smashing support. The spaces $\Spc$, $\Spch$, $\Spcs$, $\SPC$ are related via the following commutative diagram.

\begin{equation*}
\begin{tikzcd}[cells={text width={width("$\Spch$")},align=center},row sep=3.5em,column sep=0.8em]
& \Spch \dlar["\xi"',tail] \drar["\phi",two heads]
\arrow[tail,dd,outer sep=-11pt,"\chi"',bend right=90,shift right=19,shorten >=-4.7em,shorten <=-4.7em]
\\
\Spcs \arrow[rr,"\psi",two heads] \arrow[dr,shift left=1ex,"\rotatebox{315}{$\cong$}"',outer sep=-3pt] && \Spc
\\
& \SPC \arrow[ul,shift left=1ex] \arrow[ur,"\omega"',two heads]
\end{tikzcd}
\end{equation*}

To begin with, let $\scB$ be a homological prime such that $\chi(\scB)=\Ker(\gG_P\ot -)$. Using the relation $\Ker(\gG_P\ot -)\cap \scT^\rmc = P\cap \scT^\rmc$ yields $\phi(\scB)=h^{-1}(\scB)\cap \scT^\rmc=h^{-1}(\scB')\cap \scT^\rmc=\chi(\scB)\cap \scT^\rmc=\xi(\scB)\cap \scT^\rmc=(\psi \circ \xi)(\scB)$. This shows that $\psi \circ \xi = \phi$ and also that $\psi$ is equal to the composite $\Spcs \xr{\cong} \SPC \xr{\omega} \Spc$, where $\omega$ maps an objectwise-prime localizing ideal to its compact part. Lastly, the triangle on the left commutes by construction of $\xi$.

\begin{thm}
\label{thm:tc}%
Let $\scT$ be a big tt-category whose smashing spectrum is $T_D$ and assume that $\scT$ is stratified by the small smashing support. Then $\scT$ satisfies the Telescope Conjecture if and only if $\,\Spch$ is $T_0$ and $\supph$ detects vanishing of objects.
\end{thm}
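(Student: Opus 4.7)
The plan is to exploit the commutative triangle $\phi = \psi \circ \xi$ against the three separate characterizations already proved in the paper: TC is equivalent to $\psi$ being a bijection/homeomorphism (Proposition~\ref{prop:tc-iff-t0}); $\Spch$ being $T_0$ is equivalent to $\phi$ being a homeomorphism (Remark~\ref{rem:t_0-homological-spectrum}); and $\supph$ detecting vanishing is equivalent to $\xi$ being surjective (Proposition~\ref{prop:xi-surj-supph}). Since $\xi$ is always injective (Lemma~\ref{lem:xi-inj}), $\phi$ is always surjective, and $\phi,\psi$ both present their targets as Kolmogorov quotients, the proof reduces to set-theoretic bookkeeping on the triangle.

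For the forward implication, I would start by invoking Proposition~\ref{prop:tc-iff-t0} to conclude that $\psi$ is bijective. Combining this with surjectivity of $\phi$ and the relation $\phi = \psi \circ \xi$ forces $\xi$ to be surjective, hence bijective by Lemma~\ref{lem:xi-inj}. Proposition~\ref{prop:xi-surj-supph} then yields that $\supph$ detects vanishing. With both $\xi$ and $\psi$ now bijective, $\phi = \psi \circ \xi$ is a composition of bijections, hence bijective; and because $\phi$ presents $\Spc$ as the Kolmogorov quotient of $\Spch$, a bijective Kolmogorov quotient map forces the quotient relation to be trivial, so $\Spch$ is $T_0$.

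For the converse, assume $\Spch$ is $T_0$ and $\supph$ detects vanishing. The first hypothesis gives that $\phi$ is bijective (Remark~\ref{rem:t_0-homological-spectrum}), and the second that $\xi$ is surjective, hence bijective by Lemma~\ref{lem:xi-inj} and Proposition~\ref{prop:xi-surj-supph}. Reading $\psi = \phi \circ \xi^{-1}$ at the level of sets shows that $\psi$ is bijective. Now $\psi$ is continuous and exhibits $\mathsf{Spc}(\scT^\rmc)^\vee$ as the Kolmogorov quotient of $(\Spcs,\sfT)$, so bijectivity forces $(\Spcs,\sfT)$ to be $T_0$; applying Proposition~\ref{prop:tc-iff-t0} once more concludes that $\scT$ satisfies the Telescope Conjecture.

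There is no serious obstacle here; the one subtlety to handle cleanly is that $\xi$ is not claimed to be continuous (preimages under $\xi$ of basic opens in the small topology on $\Spcs$ are basic \emph{closed} subsets of $\Spch$), so the identities $\phi = \psi \circ \xi$ and $\psi = \phi \circ \xi^{-1}$ must be used purely as equalities of underlying set maps. All topological content is imported through the continuity of $\phi$ and $\psi$ and through the Kolmogorov quotient characterizations already established in the paper.
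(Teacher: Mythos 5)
Your proof is correct and follows essentially the same strategy as the paper: the triangle $\phi = \psi \circ \xi$ with $\xi$ injective and $\phi$ surjective, combined with the three characterizations in terms of the Telescope Conjecture, $T_0$, and $\supph$ detecting vanishing. The paper compresses the set-theoretic bookkeeping into the single observation that $\psi$ is bijective if and only if both $\xi$ and $\phi$ are, and cites external references for the $\phi$--$T_0$ and $\psi$--TC equivalences rather than rederiving them from the internal Kolmogorov-quotient characterizations, but the substance is identical to your argument.
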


\begin{proof}
By~\cite[Corollary~5.1.6]{BalchinStevenson21}, $\scT$ satisfies the Telescope Conjecture if and only if $\psi$ is bijective. Since $\xi$ is injective and $\phi$ is surjective and $\psi\circ \xi = \phi$, it holds that $\psi$ is bijective if and only if $\xi$ and $\phi$ are bijective. According to~\Cref{lem:xi-inj} and~\Cref{prop:xi-surj-supph}, $\xi$ is bijective if and only if $\supph$ detects vanishing of objects. By~\cite[Proposition~4.5]{BarthelHeardSanders21b}, $\phi$ is bijective if and only if $\Spch$ is $T_0$. Hence, $\scT$ satisfies the Telescope Conjecture if and only if $\Spch$ is $T_0$ and $\supph$ detects vanishing of objects.
\end{proof}

\section{Smashing stratification vs Balmer--Favi stratification}\label{sec:comparison-BF}%
We recall a few definitions, following the terminology used in~\cite{BarthelHeardSanders21a}. A point $\mfp\in \Spc$ is called \emph{weakly visible} if $\{\mfp\}$ can be written as an intersection of a Thomason subset and the complement of a Thomason subset. The Balmer spectrum $\Spc$ is called \emph{weakly noetherian} if all of its points are weakly visible. The \emph{generalization closure} of a point $\mfp\in \Spc$ is $\mathsf{gen}(\mfp)=\set{\mfq\in \Spc}{\mfp \in \ol{\{\mfq\}}}=\set{\mfq\in \Spc}{\mfp \subseteq \mfq}$. The Balmer spectrum is called \emph{generically noetherian} if $\mathsf{gen}(\mfp)$ is a noetherian space, for all $\mfp\in \Spc$. If $\Spc$ is generically noetherian, then $\Spc$ is weakly noetherian.

Assume that $\Spc$ is weakly noetherian. Every Thomason subset $V\subseteq \Spc$ corresponds to a thick ideal of $\scT^\rmc$, namely $\scT^\rmc_V=\set{x\in \scT^\rmc}{\supp(x) \subseteq V}$. Being compactly generated, the localizing ideal $\loc(\scT^\rmc_V)$ is a smashing ideal. The corresponding left and right idempotents are denoted by $e_V$ and $f_V$, respectively. Since we assumed that $\Spc$ is weakly noetherian, for each $\mfp\in \Spc$, there exist Thomason subsets $V_1,V_2\subseteq \Spc$ such that $\{\mfp\}=V_1\cap (\Spc \setminus V_2)$. We define $g_\mfp=e_{V_1}\ot f_{V_2}$. The objects $g_\mfp$ are idempotent and do not depend on the choice of Thomason subsets used to construct them. For any object $X\in \scT$, the \emph{Balmer--Favi support} of $X$ is $\Supp(X)=\set{\mfp\in \Spc}{g_\mfp\ot X\neq 0}$. For more details, see~\cite{BalmerFavi11}.

\begin{rem}\label{rem:td-wn}%
If $X$ is a spectral space, then $X$ is weakly noetherian if and only if the Hochster dual space $X^\vee$ (whose open subsets are the Thomason subsets of $X$) is $T_D$. In particular, this applies to $\Spc$. Note that $\Spcs$ is likely not spectral; see~\cite[Remark 3.2.12]{BalchinStevenson21}.
\end{rem}

\begin{lem}
\label{lem:TC-Rickard-BF-idempotents}%
Suppose that $\Spcs$ is $T_D$ and that $\scT$ satisfies the Telescope Conjecture. Then $\gG_P=g_{P^\rmc},\ \forall P\in \Spcs$.
\end{lem}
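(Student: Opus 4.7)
The plan is to use the Telescope Conjecture to transport the construction of $\gG_P$ in $\Spcs$ into the construction of $g_{P^\rmc}$ in $\Spc$ via the homeomorphism $\psi$, showing that the idempotents produced by the two recipes agree on the nose.

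First, I would invoke~\Cref{prop:tc-iff-t0}: under the Telescope Conjecture, $\psi\colon \Spcs\to \mathsf{Spc}(\scT^\rmc)^\vee$ is a homeomorphism and every smashing ideal of $\scT$ is compactly generated. Concretely, this yields a lattice isomorphism $\Thom(\Spc)\xrightarrow{\cong}\sfS(\scT)$, $V\mapsto \loc(\scT^\rmc_V)$, together with the inverse $\scS\mapsto \bigcup_{x\in \scS\cap\scT^\rmc}\supp(x)$. In particular, by~\Cref{rem:td-wn}, the $T_D$ hypothesis on $\Spcs$ transfers through $\psi$ to make $\mathsf{Spc}(\scT^\rmc)^\vee$ a $T_D$ space, so $\Spc$ is weakly noetherian and the Balmer--Favi idempotents $g_\mfp$ are defined.

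Next, fix $P\in \Spcs$ and a smashing ideal $\scS$ with $\{P\}=U_\scS\cap V_P$. Applying the homeomorphism $\psi$ gives $\{P^\rmc\}=\psi(U_\scS)\cap \psi(V_P)$ in $\mathsf{Spc}(\scT^\rmc)^\vee$. A short computation shows that $\psi(U_\scS)=\bigcup_{x\in \scS\cap\scT^\rmc}\supp(x)$, which is a Thomason subset $V_1\subseteq \Spc$, and that $\psi(V_P)=\set{\mfq\in \Spc}{P^\rmc\subseteq \mfq}$, whose complement $V_2=\bigcup_{x\in P^\rmc}\supp(x)$ is also Thomason. Thus $\{P^\rmc\}=V_1\cap(\Spc\setminus V_2)$ is a weakly visible presentation of~$P^\rmc$.

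To finish, I would match the idempotents. By construction, $\scS=\loc(\scT^\rmc_{V_1})$, so $e_\scS=e_{V_1}$; likewise, under the compact-generation identification, $P=\loc(\scT^\rmc_{V_2})$, so $f_P=f_{V_2}$. Tensoring gives $\gG_P=e_\scS\ot f_P=e_{V_1}\ot f_{V_2}=g_{P^\rmc}$, where the last equality is the definition of the Balmer--Favi idempotent relative to the pair $(V_1,V_2)$ (the independence of choice built into both recipes makes this comparison legitimate). The only mildly delicate step is the compatibility of the Stone-dual identifications $U_\scS\leftrightarrow V_1$ and $V_P\leftrightarrow \Spc\setminus V_2$ with $\psi$, but this is immediate from the explicit form of~$\psi$ and of the compact-generation bijection supplied by the Telescope Conjecture.
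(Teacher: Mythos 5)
Your argument is correct and follows essentially the same route as the paper: apply the homeomorphism $\psi$ (furnished by the Telescope Conjecture) to the locally closed presentation $\{P\}=U_\scS\cap V_P$, identify $\psi(U_\scS)$ and $\Spc\setminus\psi(V_P)=\psi(U_P)$ as Thomason subsets whose associated thick ideals are $\scS^\rmc$ and $P^\rmc$, and then match the idempotents $e_\scS=e_{V_1}$, $f_P=f_{V_2}$. The only cosmetic difference is that you compute $\psi(U_\scS)$ directly as $\bigcup_{x\in\scS^\rmc}\supp(x)$ using compact generation, whereas the paper derives the same identification from the relation $\psi^{-1}(\supp(x))=U_{\loc(x)}$; both amount to the same bookkeeping.
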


\begin{proof}
Let $P\in \Spcs$ and $\scS\in \sfS(\scT)$ such that $\{P\}=U_\scS\cap V_P$. Since the Telescope Conjecture holds and $\Spcs$ is $T_D$, $\psi\colon \Spcs \to \mathsf{Spc}(\scT^\rmc)^\vee$ is a homeomorphism and $\Spc$ is weakly noetherian. It holds that $\{P^\rmc\}=\psi(U_\scS)\cap \psi(V_P)=\psi(U_\scS)\cap (\Spc\setminus \psi(U_P))$, with $\psi(U_\scS)$ and $\psi(U_P)$ being Thomason subsets of $\Spc$. The thick ideal corresponding to $\psi(U_\scS)$ is $\scT^\rmc_{\psi(U_\scS)}=\set{x\in \scT^\rmc}{\supp(x)\subseteq \psi(U_\scS)}$. For all $x\in \scT^\rmc$, $\psi^{-1}(\supp(x))=U_{\loc(x)}$. So, $\supp(x)\subseteq \psi(U_\scS)$ if and only if $U_{\loc(x)}\subseteq U_\scS$, with the latter being equivalent to $x\in \scS^\rmc$. Therefore, $\scT^\rmc_{\psi(U_\scS)}=\scS^\rmc$. Similarly, $\scT^\rmc_{\psi(U_P)}=P^\rmc$. Consequently, $\loc(\scT^\rmc_{\psi(U_\scS)})=\scS$ and $\loc(\scT^\rmc_{\psi(U_P)})=P$. We infer that $e_{\psi(U_\scS)}=e_\scS$ and $f_{\psi(U_P)}=f_P$ and as a result, $\gG_P=g_{P^\rmc}$.
\end{proof}

\begin{cor}
\label{cor:TC-supps-Supp-compatibility}%
Suppose that $\Spcs$ is $T_D$ and that $\scT$ satisfies the Telescope Conjecture. Then $\psi^{-1}(\Supp(X))=\supps(X)$, for all $X \in \scT$.
\end{cor}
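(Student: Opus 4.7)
The plan is to reduce the claim to a direct comparison of the two supports at the level of points, using \Cref{lem:TC-Rickard-BF-idempotents} as the key input. Recall that $\psi \colon \Spcs \to \mathsf{Spc}(\scT^\rmc)^\vee$ sends a meet-prime smashing ideal $P$ to $P^\rmc = P\cap \scT^\rmc$, and under the Telescope Conjecture this map is a homeomorphism (so in particular a bijection). Therefore $\psi^{-1}(\Supp(X))$ consists of exactly those $P\in \Spcs$ with $P^\rmc \in \Supp(X)$, i.e.\ those $P$ for which $g_{P^\rmc}\ot X\neq 0$.

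First I would unpack both sides. By definition, $P\in \supps(X)$ means $\gG_P\ot X\neq 0$, and $P\in \psi^{-1}(\Supp(X))$ means $g_{P^\rmc}\ot X\neq 0$. So it suffices to show that these two conditions are equivalent for every $P\in \Spcs$. This is exactly the content of \Cref{lem:TC-Rickard-BF-idempotents}, which under our hypotheses $(\Spcs$ is $T_D$ and $\scT$ satisfies the Telescope Conjecture$)$ gives the identification $\gG_P = g_{P^\rmc}$ as objects of $\scT$. Tensoring with $X$ and taking the vanishing locus then yields the claimed set-theoretic equality.

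Putting this together, the proof is a one-line chain of equivalences:
\[
P\in \supps(X) \Leftrightarrow \gG_P\ot X\neq 0 \Leftrightarrow g_{P^\rmc}\ot X\neq 0 \Leftrightarrow \psi(P)\in \Supp(X) \Leftrightarrow P\in \psi^{-1}(\Supp(X)),
\]
where the middle equivalence uses \Cref{lem:TC-Rickard-BF-idempotents} and the others are just the definitions of $\supps$, $\Supp$ and $\psi$. There is no real obstacle here since all the substantive work has been done in \Cref{lem:TC-Rickard-BF-idempotents}; the present corollary is essentially a clean packaging of that lemma into a statement about supports rather than idempotents.
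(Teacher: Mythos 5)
Your proof is correct and matches the paper's intent exactly: the paper states this as an immediate corollary of \Cref{lem:TC-Rickard-BF-idempotents} without further argument, and your chain of equivalences is precisely the unwinding of definitions that justifies that. Nothing more is needed.
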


\begin{thm}
\label{thm:supps-BF-strat}%
Let $\scT$ be a big tt-category.
\begin{enumerate}[\rm(a)]
\item If $\Spc$ is generically noetherian and the Balmer--Favi support stratifies $\scT$, then $\scT$ satisfies the Telescope Conjecture, $\Spcs$ is $T_D$ and the small smashing support stratifies $\scT$.
\label{statement:BF-implies-supps}%
\item If $\Spcs$ is $T_D$ and the small smashing support stratifies $\scT$ and $\scT$ satisfies the Telescope Conjecture, then the Balmer--Favi support stratifies $\scT$.
\label{statement:supps-implies-BF}%
\end{enumerate}
\end{thm}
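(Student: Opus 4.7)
The plan is to leverage the homeomorphism $\psi\colon \Spcs \to \mathsf{Spc}(\scT^\rmc)^\vee$ afforded by the Telescope Conjecture (\Cref{prop:tc-iff-t0}) in order to transport the relevant stratification bijections between $\scP(\Spc)$ and $\scP(\Spcs)$.

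For part~(a), I would first invoke the theorem of Barthel--Heard--Sanders asserting that, under the hypothesis that $\Spc$ is generically noetherian, stratification of $\scT$ by the Balmer--Favi support forces $\scT$ to satisfy the Telescope Conjecture. With TC secured, \Cref{prop:tc-iff-t0} yields that $\psi$ is a homeomorphism. Since generic noetherianness of $\Spc$ implies weak noetherianness, \Cref{rem:td-wn} gives that $\mathsf{Spc}(\scT^\rmc)^\vee$ is $T_D$, whence $\Spcs$ is $T_D$ as well. Having secured the $T_D$ hypothesis, \Cref{cor:TC-supps-Supp-compatibility} identifies $\supps(X) = \psi^{-1}(\Supp(X))$ for every $X\in \scT$. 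The final step is to transport the Balmer--Favi stratification bijection along the bijection of powersets induced by $\psi^{-1}$: for $W\subseteq \Spcs$ we have $\supps(X)\subseteq W$ if and only if $\Supp(X)\subseteq \psi(W)$, so the small-smashing map $\tau$ corresponds to the Balmer--Favi $\tau_{\mathrm{BF}}$ via $\psi$, and similarly $\sigma$ corresponds to $\sigma_{\mathrm{BF}}$; mutual inversion of $\sigma_{\mathrm{BF}}$ and $\tau_{\mathrm{BF}}$ propagates to $\sigma$ and $\tau$ because $\psi$ is bijective.

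For part~(b), the argument is essentially symmetric. The Telescope Conjecture combined with \Cref{prop:tc-iff-t0} gives that $\psi$ is a homeomorphism, and combined with the assumption that $\Spcs$ is $T_D$, \Cref{rem:td-wn} ensures that $\Spc$ is weakly noetherian so that the Balmer--Favi support is defined. \Cref{cor:TC-supps-Supp-compatibility} again yields $\supps = \psi^{-1}\circ \Supp$, and transporting the small smashing stratification bijection across $\psi$ in the opposite direction recovers the Balmer--Favi stratification bijection.

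The main obstacle lies in part~(a): the cited implication ``generic noetherianness plus Balmer--Favi stratification implies the Telescope Conjecture'' is not established in the excerpt and must be imported from the Barthel--Heard--Sanders stratification theory (it is the analogue of the result alluded to in the introduction of the paper). Once TC is in hand, everything else is a formal manipulation of bijections under the homeomorphism $\psi$, using only \Cref{prop:tc-iff-t0}, \Cref{rem:td-wn} and \Cref{cor:TC-supps-Supp-compatibility}.
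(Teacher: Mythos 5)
Your proof is correct and essentially matches the paper's argument. The only substantive difference is in part~(b): the paper completes the argument by invoking the Universality Theorem of Barthel--Heard--Sanders (their Theorem~7.6), which says that any stratifying support theory satisfying suitable hypotheses must agree with the Balmer--Favi support and forces Balmer--Favi stratification; you instead observe that since $\psi$ is a bijection and $\supps = \psi^{-1}\circ\Supp$, the $\sigma$/$\tau$ maps for the two support theories correspond under the powerset bijection induced by $\psi$, so mutual inversion transports directly. Your route is slightly more elementary and self-contained (it avoids importing the external machinery of the Universality Theorem), while the paper's route emphasizes the conceptual point that the small smashing support is ``isomorphic'' to the Balmer--Favi support in the sense of that theorem; both are valid.
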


\begin{proof}
If $\Spc$ is generically noetherian and $\scT$ is stratified by the Balmer--Favi support, then, by~\cite[Theorem 9.11]{BarthelHeardSanders21a}, $\scT$ satisfies the Telescope Conjecture. Thus, $\psi\colon \Spcs \to \mathsf{Spc}(\scT^\rmc)^\vee$ is a homeomorphism. Since $\Spc$ is weakly noetherian, $\mathsf{Spc}(\scT^\rmc)^\vee$ is $T_D$. Therefore, $\Spcs$ is $T_D$, so the small smashing support is defined. Invoking~\Cref{lem:TC-Rickard-BF-idempotents} completes the proof of~\eqref{statement:BF-implies-supps}.

If $\scT$ is stratified by the small smashing support and $\scT$ satisfies the Telescope Conjecture, then $\Spcs$ is a spectral space and, by~\Cref{cor:TC-supps-Supp-compatibility}, the hypotheses of~\cite[Theorem 7.6] {BarthelHeardSanders21a} are satisfied. We deduce that the Balmer--Favi support stratifies $\scT$. This proves~\eqref{statement:supps-implies-BF}.
\end{proof}

\begin{rem}
\label{rem:tc-recovers}%
Let us emphasize once more that if the Telescope Conjecture holds, the theory of stratification developed here recovers the theory of~\cite{BarthelHeardSanders21a}. More explicitly, if $\scT$ satisfies the Telescope Conjecture, then the following hold:
\begin{enumerate}[\rm(a)]
\item
The comparison map $\psi\colon \Spcs \to \mathsf{Spc}(\scT^\rmc)^\vee$ is a homeomorphism.
\item
$\Spcs$ is $T_D$ if and only if $\Spc$ is weakly noetherian.
\end{enumerate}
Assuming that $\Spcs$ is $T_D$:
\begin{enumerate}[\rm(a)]
\item[$(\mathrm{c})$] The small smashing support coincides with the Balmer--Favi support (under the identification of the two spectra via $\psi$).
\item[$(\mathrm{d})$]
The formulation of the local-to-global principle and of minimality in~\Cref{defn:ltg-min} coincides with those given in~\cite{BarthelHeardSanders21a}.
\item[$(\mathrm{e})$]
$\scT$ is stratified by the small smashing support if and only if $\scT$ is stratified by the Balmer--Favi support.
\end{enumerate}
\end{rem}

\begin{rem}
\label{rem:potential-of-sss}%
According to~\cite[Theorem 7.6]{BarthelHeardSanders21a}, any stratifying support theory $\sigma\colon \Ob(\scT)\to \scP(X)$ (where $X$ is a weakly noetherian spectral space) that satisfies three equivalent conditions must be ``isomorphic'' to the Balmer--Favi support theory and the latter has to stratify $\scT$ as well. If $\Spcs$ is $T_D$ and $\sigma=\supps$ stratifies~$\scT$, then the three equivalent conditions we alluded to are equivalent to the Telescope Conjecture. In~\cite[Theorem 9.11]{BarthelHeardSanders21a}, under the hypothesis that $\Spc$ is generically noetherian, it is proved that if $\scT$ is stratified by the Balmer--Favi support, then the Telescope Conjecture holds. In the case of the small smashing support, that proof can neither be reproduced nor is it expected that a different proof exists. Conclusion: The theory of smashing stratification has the potential to encompass a wider range of categories than the theory of Balmer--Favi stratification, since the Telescope Conjecture is necessary for a category to be stratified by the Balmer--Favi support, but it is probably not necessary for smashing stratification. To be clear, at the time of this writing, an example of a category that is stratified by the small smashing support and fails the Telescope Conjecture is not known.
\end{rem}

\section{Induced maps and descent}
\label{sec:image-of-SpcsF}%

In the first part of this section, we probe the image of the map between smashing spectra induced by a tensor-triangulated functor; see also~\cite{Balmer20b} for analogous results concerning homological spectra. In the second part, we present conditions under which stratification descends along tensor-triangulated functors. All big tt-categories involved are assumed to satisfy~\Cref{hyp:spatial}.

Let $F\colon \scT \to \scU$ be a coproduct-preserving tt-functor between big tt-categories. Then $F$ induces a map of frames $\sfS(\scT) \to \sfS(\scU),\ \scS \mapsto \loc(F\scS)$, which, via Stone duality, gives rise to a continuous map $\Spcsa{F}\colon \Spcsa{\scU}\to \Spcs$. Explicitly, $\Spcsa{F}$ acts by sending $Q\in \Spcsa{\scU}$ to $\bigvee\set{\scS\in \sfS(\scT)}{\scS\subseteq F^{-1}(Q)}$. Additionally, since $F$ preserves rigid=compact objects, there is an induced continuous map $\mathsf{Spc}(F)\colon \mathsf{Spc}(\scU^\rmc)\to \Spc$ that takes $\mfq \in \mathsf{Spc}(\scU^\rmc)$ to $F^{-1}(\mfq)\cap \scT^\rmc$.
\begin{rem}\label{rem:form-of-spcsF}%
The map $\Spcsa{F}$ does not behave in a way similar to the more classical $\mathsf{Spc}(F)$, namely by taking inverse images. For one, the formula for $\Spcsa{F}$ is given by Stone duality, as explained above. More concretely, there are cases where $F^{-1}(Q)$, for $Q\in \Spcsa{\scU}$, is not a smashing ideal. An example is the derived base change functor $\pi\colon \sfD(\mathbb{Z})\to \sfD(\mathbb{F}_p)$ for a prime number $p$, as demonstrated in~\cite[Example 3.4.5]{BalchinStevenson21}.
\end{rem}

\begin{lem}
\label{lem:Spcs-Spc-compatibility}%
The following square is commutative:
\[
\begin{tikzcd}[cells={text width={width("$\mathsf{Spc}(\scU^\rmc)$")},align=center},row sep=3.5em]
\Spcsa{\scU} \rar["\Spcsa{F}"] \dar["\psi_\scU"'] & \Spcs \dar["\psi_\scT"]
\\
\mathsf{Spc}(\scU^\rmc) \rar["\mathsf{Spc}(F)"] & \Spc \!.
\end{tikzcd}
\]
\end{lem}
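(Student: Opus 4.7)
The plan is to chase a point $Q\in\Spcsa{\scU}$ around the square and verify that both composites land on the same prime ideal of $\scT^\rmc$. Unwinding the definitions, the bottom-left route gives
\[
(\mathsf{Spc}(F)\circ \psi_\scU)(Q)=F^{-1}(Q\cap \scU^\rmc)\cap \scT^\rmc=\set{x\in \scT^\rmc}{Fx\in Q},
\]
since $Fx$ is automatically compact. The top-right route gives
\[
(\psi_\scT\circ \Spcsa{F})(Q)=\bigg(\bigvee\set{\scS\in \sfS(\scT)}{\scS\subseteq F^{-1}(Q)}\bigg)\cap \scT^\rmc,
\]
using the Stone-dual description of $\Spcsa{F}$ recalled in the paragraph preceding~\Cref{rem:form-of-spcsF}. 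Denote the join in the previous display by $P$.

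The first step is to observe that $F^{-1}(Q)$ is a localizing ideal of $\scT$: it is closed under coproducts and triangles because $F$ is coproduct-preserving and triangulated, and it is an ideal because $F$ is monoidal, so $F(X\ot Y)\cong FX\ot FY\in Q$ whenever $Y\in F^{-1}(Q)$ (using that $Q$ is an ideal in $\scU$). Note that $F^{-1}(Q)$ need not be smashing (cf.~\Cref{rem:form-of-spcsF}), but this is not required.

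For the inclusion $P\cap\scT^\rmc\subseteq \set{x\in \scT^\rmc}{Fx\in Q}$, I would note that each $\scS$ in the defining family of $P$ sits inside the localizing ideal $F^{-1}(Q)$, so their join $P=\loc\bigl(\bigcup\scS\bigr)$ still sits inside $F^{-1}(Q)$. Hence every $x\in P\cap\scT^\rmc$ satisfies $Fx\in Q$. For the reverse inclusion, given $x\in \scT^\rmc$ with $Fx\in Q$, the localizing ideal $\loc(x)$ is compactly generated, hence smashing by Miller's theorem (cited in~\Cref{rem:telescope}), and plainly contained in $F^{-1}(Q)$. Therefore $\loc(x)$ is one of the smashing ideals in the defining family of $P$, so $x\in\loc(x)\subseteq P$.

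The argument is essentially formal; the only mildly delicate point is recognising that even though the join defining $\Spcsa{F}(Q)$ is computed in the frame $\sfS(\scT)$ rather than as an intersection, it can still be compared with $F^{-1}(Q)$ because the join in $\sfS(\scT)$ agrees with the localizing-ideal generated by the union, and $F^{-1}(Q)$ is closed under this operation. No stratification, $T_D$-hypothesis, or spatiality beyond~\Cref{hyp:spatial} is needed.
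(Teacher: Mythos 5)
Your proof is correct and follows exactly the same line as the paper's: compute both composites, identify the bottom-left route with $F^{-1}(Q)\cap \scT^\rmc$, identify the top-right route with the join $P$ of smashing ideals contained in $F^{-1}(Q)$ intersected with $\scT^\rmc$, and then get $P\cap\scT^\rmc\subseteq F^{-1}(Q)\cap\scT^\rmc$ from the fact that the join lands in $F^{-1}(Q)$, and the reverse inclusion by recognizing that $\loc(x)$ is smashing for $x$ compact. The only difference is that you spell out (usefully) that $F^{-1}(Q)$ is a localizing ideal and that the join in $\sfS(\scT)$ is the localizing ideal generated by the union, points the paper leaves implicit behind the word ``clearly.''
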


\begin{proof}
Let $Q\in \Spcsa{\scU}$. Then
\begin{align*}
&\mfp_1\coloneqq(\psi_\scT \circ \Spcsa{F})(Q)=\bigvee\set{\scS\in \sfS(\scT)}{\scS\subseteq F^{-1}(Q)}\cap \scT^\rmc, \\
&\mfp_2\coloneqq(\mathsf{Spc}(F) \circ \psi_\scU)(Q)=F^{-1}(Q\cap \scU^\rmc)\cap \scT^\rmc=F^{-1}(Q)\cap \scT^\rmc.
\end{align*}
Clearly, $\mfp_1\subseteq \mfp_2$. For any $x\in \mfp_2$, it holds that $\loc(x)$ is a smashing ideal of $\scT$ and $\loc(x)\subseteq F^{-1}(Q)$. This shows that $\mfp_2\subseteq \mfp_1$, thus $\mfp_1=\mfp_2$.
\end{proof}

\begin{cor}
\label{cor:SpcsF-homeo-tc}%
Let $F\colon \scT\to \scU$ be a coproduct-preserving tt-functor between big tt-categories such that the induced map $\Spcsa{F}\colon \Spcsa{\scU}\to \Spcs$ is a homeomorphism. $\!\!$If $\scT$ satisfies the telescope conjecture, then $\scU$ satisfies the telescope conjecture and the induced map $\mathsf{Spc}(F)\colon \mathsf{Spc}(\scU^\rmc) \to \Spc$ is a homeomorphism.
\end{cor}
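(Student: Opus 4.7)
The plan is to leverage the commutative square from~\Cref{lem:Spcs-Spc-compatibility} together with the characterization of the Telescope Conjecture in~\Cref{prop:tc-iff-t0}. Since $\scT$ satisfies the Telescope Conjecture, \Cref{prop:tc-iff-t0} gives that $\psi_\scT\colon \Spcs \to \Spc^\vee$ is a homeomorphism, and by hypothesis $\Spcsa{F}$ is a homeomorphism as well; hence the composite $\psi_\scT \circ \Spcsa{F}$ is a homeomorphism from $\Spcsa{\scU}$ onto $\Spc^\vee$.

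From the commutativity of the square, this composite equals $\mathsf{Spc}(F)\circ \psi_\scU$. Its injectivity forces $\psi_\scU$ to be injective, and since $\psi_\scU$ is always surjective~(\Cref{rem:comparison-of-spcs-with-spc}), we conclude that $\psi_\scU$ is bijective. By~\Cref{prop:tc-iff-t0} applied to $\scU$, the map $\psi_\scU$ exhibits $\mathsf{Spc}(\scU^\rmc)^\vee$ as the Kolmogorov quotient of $(\Spcsa{\scU},\sfT)$, so its bijectivity forces $(\Spcsa{\scU},\sfT)$ to be $T_0$. Applying~\Cref{prop:tc-iff-t0} once more concludes both that $\scU$ satisfies the Telescope Conjecture and that $\psi_\scU$ is itself a homeomorphism.

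Once $\psi_\scU$ is known to be a homeomorphism, solving the square gives $\mathsf{Spc}(F) = (\psi_\scT\circ \Spcsa{F})\circ \psi_\scU^{-1}$, a composite of homeomorphisms, hence a homeomorphism between the Hochster duals $\mathsf{Spc}(\scU^\rmc)^\vee \to \Spc^\vee$. Since Hochster duality is a self-duality of spectral spaces and spectral maps, this also yields that $\mathsf{Spc}(F)\colon \mathsf{Spc}(\scU^\rmc)\to \Spc$ is a homeomorphism in the standard topology. The main subtlety lies in the interplay between the standard and small topologies on $\Spcsa{\scU}$: the homeomorphism hypothesis on $\Spcsa{F}$ concerns the standard topology, while the Telescope Conjecture is detected by $T_0$-ness of the small topology via~\Cref{prop:tc-iff-t0}. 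The bridge is that $\psi_\scU$ is simultaneously the map whose bijectivity the square delivers and the Kolmogorov quotient map for the small topology.
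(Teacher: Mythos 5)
Your proof is correct and takes essentially the same route as the paper: both run the commutative square of~\Cref{lem:Spcs-Spc-compatibility} backwards, first deducing that $\psi_\scT\circ\Spcsa{F}=\mathsf{Spc}(F)\circ\psi_\scU$ is a homeomorphism, then extracting injectivity of $\psi_\scU$, combining with its automatic surjectivity to upgrade $\psi_\scU$ to a homeomorphism (hence TC for $\scU$), and finally solving for $\mathsf{Spc}(F)$. The only cosmetic difference is that you invoke the paper's own~\Cref{prop:tc-iff-t0} (the Kolmogorov-quotient characterization) where the paper cites Balchin--Stevenson's Corollary~5.1.6 and Proposition~5.1.5 directly; these encode the same facts, and your detour through $T_0$-ness of the small topology is a perfectly valid way to pass from bijectivity of $\psi_\scU$ to it being a homeomorphism.
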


\begin{proof}
By~\cite[Corollary 5.1.6]{BalchinStevenson21}, the map $\psi_\scT$ is a homeomorphism. Therefore, by~\Cref{lem:Spcs-Spc-compatibility}, $\mathsf{Spc}(F)\circ \psi_\scU$ is a homeomorphism. This implies that $\psi_\scU$ is injective. By~\cite[Proposition 5.1.5]{BalchinStevenson21}, $\psi_\scU$ is surjective. Hence, $\psi_\scU$ is a homeomorphism and, again by~\cite[Corollary 5.1.6]{BalchinStevenson21}, $\scU$ satisfies the telescope conjecture. Since $\psi_\scU$ and $\mathsf{Spc}(F)\circ \psi_\scU$ are homeomorphisms, $\mathsf{Spc}(F)$ is a homeomorphism.
\end{proof}

\subsection{The image of $\Spcsa{F}$}
Let $F\colon \scT \to \scU$ be a coproduct-preserving tt-functor between big tt-categories. By Brown representability, $F\colon \scT\to \scU$ has a right adjoint $G\colon \scU \to \scT$. Since $F$ is monoidal, hence preserves compact objects, $G$ is lax-monoidal and preserves coproducts. Further, $F$ and $G$ are related by the \emph{projection formula}: $G(FX\ot Y)=X\ot GY$; see~\cite[Proposition 2.15]{BalmerDellambrogioSanders16}. For $Y=1$, we see that $GF(-)=G(1)\ot -$. Moreover, $G(1)\neq 0$, since $\Hom_\scU(1=F1,1)\cong \Hom_\scT(1,G(1))$.

\begin{defn}
\label{defn:maxspcs}%
Let $\scT$ be a big tt-category. The \emph{maximal smashing spectrum} of $\scT$ is the subspace $\mathsf{MaxSpc}^\rms(\scT) \subseteq \Spcs$ that consists of those meet-prime smashing ideals of $\scT$ that are maximal with respect to the inclusion relation of meet-prime smashing ideals.
\end{defn} 

\begin{lem}
\label{lem:maxspcs-properties}%
Let $\scT$ be a big tt-category. Then the following hold:
\begin{enumerate}[\rm(a)]
\item
\label{prop:maxspcs-non-empty-closed-points}%
The maximal smashing spectrum of $\scT$ is non-empty and consists of the closed points of $\Spcs$.
\item
\label{prop:max-in-Spcs-S(T)}%
A meet-prime smashing ideal $P$ is maximal in $\Spcs$ if and only if $P$ is maximal in $\sfS(\scT)$.
\item
\label{prop:Supps-supps-closed-points}%
Assuming that $\Spcs$ is $T_D$, the following holds:
\[
\mathsf{MaxSpc}^\rms(\scT)\cap \Supps(X)=\mathsf{MaxSpc}^\rms(\scT)\cap \supps(X),\, \forall X\in \scT.
\]
\end{enumerate}
\end{lem}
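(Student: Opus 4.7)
For part (a), the identification with closed points is immediate from \Cref{rem:spcs-opens}: $\ol{\{P\}} = V_P$, so $P$ is closed precisely when $V_P = \{P\}$, i.e., when no $Q \in \Spcs$ strictly contains $P$, which is the defining condition of $\mathsf{MaxSpc}^\rms(\scT)$. For non-emptiness, I would apply Zorn's lemma directly to $\Spcs$. Given a chain $\{P_i\}$, the join $\bigvee_i P_i \in \sfS(\scT)$ is proper because the top of $\sfS(\scT)$ is frame-compact: since $1 \in \scT^\rmc$ and $\Spcs = \Supps(1) = U_\scT$ is quasi-compact (being the big smashing support of a compact object, as recorded at the end of \Cref{sec:spcs}), joins of chains of proper smashing ideals remain proper. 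Spatiality of $\sfS(\scT)$ then yields a meet-prime $Q \supseteq \bigvee_i P_i$, serving as an upper bound for the chain in $\Spcs$. Zorn's lemma thus produces a maximal element of $\Spcs$, i.e., a point of $\mathsf{MaxSpc}^\rms(\scT)$. The crucial step is the frame-compactness, which hinges on the quasi-compactness observation.

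For part (b), for $(\Rightarrow)$ suppose $P$ is maximal in $\Spcs$ and $P \subseteq \scS$ for a proper smashing ideal $\scS$. By spatiality, $\scS = \bigwedge\{Q \in \Spcs : \scS \subseteq Q\}$ with the set on the right non-empty (as $\scS \neq \scT$); any such $Q$ satisfies $P \subseteq Q$, forcing $Q = P$ by maximality, whence $\scS \subseteq P$ and $P = \scS$. For $(\Leftarrow)$, a maximal proper smashing ideal $P$ is meet-prime by frame distributivity: if $\scS_1 \cap \scS_2 \subseteq P$ with $\scS_1, \scS_2 \not\subseteq P$, then $P \vee \scS_i = \scT$ by maximality, and $\scT = (P \vee \scS_1) \wedge (P \vee \scS_2) = P \vee (\scS_1 \wedge \scS_2) \subseteq P$ is a contradiction; any $Q \in \Spcs$ with $P \subseteq Q$ is then a proper smashing ideal containing $P$, hence $Q = P$.

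For part (c), $\supps(X) \subseteq \Supps(X)$ holds generally by \Cref{lem:supps-properties}, so I need only the reverse inclusion on $\mathsf{MaxSpc}^\rms(\scT)$. For $P \in \mathsf{MaxSpc}^\rms(\scT)$, part (a) gives $V_P = \{P\}$, hence $\gG_P \cong f_P$ by \Cref{rem:smash-obj}. The standard equivalence $X \in P \Leftrightarrow f_P \ot X = 0$ (read off the idempotent triangle for $P$) then yields
\[
P \in \supps(X) \Leftrightarrow \gG_P \ot X \neq 0 \Leftrightarrow f_P \ot X \neq 0 \Leftrightarrow X \notin P \Leftrightarrow P \in \Supps(X),
\]
so the two supports coincide on $\mathsf{MaxSpc}^\rms(\scT)$.
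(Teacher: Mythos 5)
Your proof is correct and follows essentially the same route as the paper's, which merely asserts that ``every proper smashing ideal is contained in some meet-prime smashing ideal, which, moreover, is maximal'' and derives (a) and (b) from that; you fill in that assertion with the Zorn argument, using the frame-compactness of the top element of $\sfS(\scT)$ (equivalently, quasi-compactness of $\Spcs=\Supps(1)$), which is the standard justification and is indeed the fact the paper appeals to at the end of Section 3. Part (c) is verbatim the paper's argument, via $\gG_P\cong f_P$ at closed points from \Cref{rem:smash-obj}.
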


\begin{proof}
Properties~\eqref{prop:maxspcs-non-empty-closed-points} and~\eqref{prop:max-in-Spcs-S(T)} follow from the fact that every proper smashing ideal is contained in some meet-prime smashing ideal, which, moreover, is maximal. For the non-emptyness property, simply note that $0$ is smashing. Let $X$ be an object of $\scT$. If $P\in \Spcs$ is a closed point, then $P=\Ker(\gG_P\ot -)$; see~\Cref{rem:smash-obj}. Consequently, $P\in \Supps(X)$ if and only if $P\in \supps(X)$. This proves~\eqref{prop:Supps-supps-closed-points}.
\end{proof}
\begin{prop}
\label{prop:maxspcs-supp(G(1))}%
Let $F\colon \scT \to \scU$ be a coproduct-preserving tt-functor between big tt-categories with right adjoint $G$. Then $\mathsf{MaxSpc}^\rms(\scT) \cap \Supps(G(1)) \subseteq \im \Spcsa{F}$.
\end{prop}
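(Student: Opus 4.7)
The goal is to construct, for each $P \in \mathsf{MaxSpc}^\rms(\scT)$ with $G(1) \notin P$, a meet-prime smashing ideal $Q \in \Spcsa{\scU}$ satisfying $\Spcsa{F}(Q) = P$. Recall that $\Spcsa{F}(Q) = \bigvee\{\scS \in \sfS(\scT) : F(\scS) \subseteq Q\}$. If $Q$ is any proper meet-prime smashing ideal containing $F(P)$, then $P$ itself appears in this join, so $P \subseteq \Spcsa{F}(Q)$. For the reverse inclusion, any $\scS \in \sfS(\scT)$ with $\scS \not\subseteq P$ must satisfy $\scS \vee P = \scT$ by maximality of $P$ in $\sfS(\scT)$ (\Cref{lem:maxspcs-properties}(b)); applying \Cref{lem:image-of-ideal} to $F$ then yields $1_\scU \in \loc(F(\scS) \cup F(P))$, which would force $1_\scU \in Q$ if $F(\scS) \subseteq Q$, contradicting properness of $Q$. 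So it suffices to produce a proper meet-prime smashing ideal $Q$ of $\scU$ containing $F(P)$.

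The one piece of real content, and the main obstacle, is to show that $\loc(F(P))$ is a \emph{proper} smashing ideal of $\scU$; granted this, spatiality of $\sfS(\scU)$ (\Cref{hyp:spatial}) furnishes a meet-prime $Q$ above it, since every proper element of a spatial frame lies below some meet-prime. To verify properness, I first note that $\loc(F(P))$ equals the plain localizing subcategory generated by the family $S = \{F(x) \otimes Y : x \in P,\ Y \in \scU\}$: the inclusion $S \subseteq \loc(F(P))$ is immediate, and conversely $\sloc(S)$ is itself a tensor ideal containing $F(P)$, by the standard argument that $\{Z \in \scU : Y \otimes Z \in \sloc(S)\}$ is a localizing subcategory containing $S$ for each fixed $Y \in \scU$.

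Now suppose for contradiction that $1_\scU \in \loc(F(P)) = \sloc(S)$. The right adjoint $G$ is triangulated and coproduct-preserving, so the collection $\{Z \in \scU : G(Z) \in \sloc(G(S))\}$ is a localizing subcategory of $\scU$ containing $S$, hence containing $\sloc(S)$; in particular, $G(1) = G(1_\scU) \in \sloc(G(S))$. The projection formula gives $G(F(x) \otimes Y) = x \otimes G(Y)$, and each such element lies in $P$ since $x \in P$ and $P$ is a tensor ideal. Therefore $\sloc(G(S)) \subseteq \sloc(P) = P$, forcing $G(1) \in P$ and contradicting the hypothesis. This completes the plan.
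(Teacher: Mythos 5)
Your proof is correct but takes a genuinely different route from the paper's for the key properness step. The paper argues through the idempotent machinery: since $P$ corresponds to the right idempotent $f_P$ and $F$ is monoidal, $F(f_P)$ is the right idempotent whose associated smashing ideal is $\loc(F(P))$; if that ideal were all of $\scU$ then $F(f_P)=0$, whence $G(1)\otimes f_P = GF(f_P) = 0$ by the projection formula, giving $G(1)\in P$. Your argument instead unwinds $\loc(F(P))$ as a plain localizing subcategory $\sloc(S)$ on the generators $F(x)\otimes Y$, applies $G$ (the content of \Cref{lem:auxilliary}), and invokes the projection formula on each generator to land back in $P$. Both are valid; the paper's is a bit shorter because it packages the whole computation into a single idempotent, whereas yours is more elementary in the sense that it avoids invoking the correspondence between smashing ideals and idempotent triangles at the point of the properness argument. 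Your extra paragraph establishing the reverse inclusion $\Spcsa{F}(Q)\subseteq P$ is not wrong but is unnecessary: once $P\subseteq \Spcsa{F}(Q)$ is known, the fact that $\Spcsa{F}(Q)$ is a meet-prime (hence proper) smashing ideal combined with maximality of $P$ already forces equality, which is how the paper concludes.

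One small caveat worth making explicit: you announce that the obstacle is to show $\loc(F(P))$ is a \emph{proper smashing ideal}, but your argument only establishes properness; you never justify that $\loc(F(P))$ is actually smashing, which is needed before spatiality of $\sfS(\scU)$ can hand you a meet-prime above it. This is not a serious gap --- the fact that $\scS\mapsto\loc(F\scS)$ lands in $\sfS(\scU)$ is exactly the frame map underlying the construction of $\Spcsa{F}$, stated at the start of \Cref{sec:image-of-SpcsF}, and it also follows directly from the observation that $F$ carries the idempotent triangle for $P$ to an idempotent triangle in $\scU$. But as written your proof relies on this silently, and a reader following your route (which deliberately avoids the idempotent picture) might not see why it holds. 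A one-line citation would close the gap.
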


\begin{proof}
Let $P\in \mathsf{MaxSpc}^\rms(\scT)$ such that $G(1)\notin P$ and consider the corresponding right idempotent $f_P$. Then $F(f_P)$ is a right idempotent whose corresponding smashing ideal is $\loc(F(P))$. Claim: $\loc(F(P))$ is proper. If this was not the case, then $F(f_P)=0$. Thus, $G(1)\ot f_P=GF(f_P)=0$. It follows that $G(1)\in P$, which is ruled out by assumption. Being a proper smashing ideal, $\loc(F(P))$ is contained in some $Q\in \Spcsa{\scU}$. Since $P\subseteq F^{-1}(\loc(F(P)))\subseteq F^{-1}(Q)$, it holds that $P\subseteq \Spcsa{F}(Q)$. Since $P$ is maximal, $\Spcsa{F}(Q)=P$. In conclusion, $P\in \im \Spcsa{F}$.
\end{proof}

\begin{prop}
\label{prop:img-subseq-supps}%
Let $F\colon \scT \to \scU$ be a coproduct-preserving tt-functor between big tt-categories with right adjoint $G$. Then $\im \Spcsa{F} \subseteq \Supps(G(1))$. Assuming that $\Spcs$ is $T_D$, if $G$ is conservative, i.e., $\Ker G=0$, then $\im \Spcsa{F}\subseteq \supps(G(1))$.
\end{prop}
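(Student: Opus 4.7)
The plan is to treat the two inclusions separately, with the projection formula $GF(X)\cong X\ot G(1)$ and the adjunction triangle identities as the central tools.

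For the first inclusion, I take $Q\in \Spcsa{\scU}$ and set $P=\Spcsa{F}(Q)$. Unpacking the Stone-dual description of $\Spcsa{F}$, the ideal $P$ is the largest smashing ideal contained in $F^{-1}(Q)$; the latter is readily checked to be a localizing tensor-ideal since $F$ is a coproduct-preserving tt-functor. Arguing by contradiction, I would assume $G(1)\in P$. Since $P$ is a tensor ideal, $X\ot G(1)\in P$ for every $X\in \scT$, and the projection formula rewrites this as $GF(X)\in P\subseteq F^{-1}(Q)$, equivalently $FGF(X)\in Q$ for every $X\in \scT$. The crucial input is the triangle identity $\e F\circ F\eta=\Id_F$, which realises $F(X)$ as a retract of $FGF(X)$; since $Q$ is thick, it follows that $F(X)\in Q$ for every $X\in \scT$. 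Specialising to $X=1$ yields $1_\scU=F(1)\in Q$, contradicting the properness of $Q$.

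For the second inclusion, I fix $Q$ and $P=\Spcsa{F}(Q)$ as before and aim for $\gG_P\ot G(1)\neq 0$. Choose $\scS\in \sfS(\scT)$ with $U_\scS\cap V_P=\{P\}$, so that $\gG_P=e_\scS\ot f_P$. By the projection formula, $\gG_P\ot G(1)\cong GF(\gG_P)\cong G\bigl(F(e_\scS)\ot F(f_P)\bigr)$, so by conservativity of $G$ it suffices to show $F(e_\scS)\ot F(f_P)\neq 0$ in $\scU$. Because $F$ is monoidal, $F(e_\scS)$ and $F(f_P)$ are respectively the left and right idempotents of the smashing ideals $\loc(F\scS)$ and $\loc(FP)$ of $\scU$, and a routine argument using spatiality of $\sfS(\scU)$ shows that, for any two smashing ideals $\scR_1,\scR_2$, one has $e_{\scR_1}\ot f_{\scR_2}\neq 0$ if and only if $U_{\scR_1}\cap V_{\scR_2}\neq \varnothing$. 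The point $Q$ itself witnesses that this intersection is non-empty: the inclusion $P\subseteq F^{-1}(Q)$ gives $\loc(FP)\subseteq Q$, so $Q\in V_{\loc(FP)}$; and $\loc(F\scS)\subseteq Q$ would translate, via the frame-theoretic definition of $P$, to $\scS\subseteq P$, contradicting $P\in U_\scS$.

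The only subtlety is the repeated use of the Stone-dual characterisation of $P$ (containments $\scS\subseteq P$ correspond to $\loc(F\scS)\subseteq Q$) to convert frame-theoretic hypotheses into concrete membership assertions; once internalised, the argument proceeds mechanically. Conservativity of $G$ enters only to lift $F(\gG_P)\neq 0$ to $GF(\gG_P)\neq 0$, and the $T_D$ hypothesis on $\Spcs$ is used solely to make sense of the Rickard idempotent $\gG_P$.
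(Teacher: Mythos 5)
Your proof is correct, and it takes a genuinely different and more economical route than the paper's. For the first inclusion, you argue from the triangle identity $\e F \circ F\eta = \Id_F$: if $G(1)\in P\subseteq F^{-1}(Q)$, then $FG(1)\in Q$, but $1_\scU$ is a retract of $FG(1)$, contradicting properness of $Q$ (you actually only need the case $X=1$). The paper instead constructs lifted adjoints $\wt{F}\dashv\wt{G}$ on the smashing localizations $\scT/P\to\scU/Q$, uses $\wt{G}(1)\neq 0$ to deduce $G(f_Q)\notin P$, exhibits $1\to G(f_Q)$ as a weak ring to obtain a split monic $G(f_Q)\to G(1)\ot G(f_Q)$, and derives the contradiction from the projection formula applied to $FG(1)\ot f_Q$. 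For the second inclusion, you compute $\gG_P\ot G(1)\cong GF(\gG_P)$ by the projection formula, use conservativity of $G$ to reduce to $F(\gG_P)=F(e_\scS)\ot F(f_P)\neq 0$, and then observe that $F(e_\scS)$, $F(f_P)$ are the idempotents of $\loc(F\scS)$, $\loc(FP)$, so $F(e_\scS)\ot F(f_P)=0$ would force $\loc(F\scS)\subseteq\loc(FP)\subseteq Q$, contradicting $\loc(F\scS)\nsubseteq Q$ (which follows from $\scS\nsubseteq P=\Spcsa{F}(Q)$). The paper again goes through $\wt{F},\wt{G}$, showing $\Ker\wt{G}=0$ and tracking $j_P(e_\scS)\notin\Ker\wt{F}$ to conclude $\gG_P\ot G(f_Q)\neq 0$, then tensoring the split monic with $\gG_P$. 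Your approach avoids the localization machinery entirely; the advantage is brevity and transparency, whereas the paper's construction of $\wt{F},\wt{G}$ may have been chosen for its reuse across the section. One minor remark: the criterion $e_{\scR_1}\ot f_{\scR_2}=0\Leftrightarrow \scR_1\subseteq\scR_2$ is elementary (it is simply the statement $e_{\scR_1}\in\scR_2$) and does not itself require spatiality; spatiality only converts it into the topological phrasing $U_{\scR_1}\cap V_{\scR_2}=\varnothing$, and you could dispense with that translation by arguing directly with $\loc(F\scS)\nsubseteq\loc(FP)$.
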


\begin{proof}
Let $Q\in \Spcsa{\scU}$ and $P=\Spcsa{F}(Q)$. Since $P$ and $Q$ are smashing ideals, the localizations $\scT/P$ and $\scU/Q$ are big tt-categories. Let $j_P$ and $j_Q$ denote the corresponding quotient functors with right adjoints $k_P$ and $k_Q$, respectively. It holds that $P\subseteq F^{-1}(Q)=\Ker(j_Q\circ F)$. Therefore, there exists a unique triangulated functor $\wt{F}\colon \scT/P\to \scU/Q$ such that $\wt{F}\circ j_P \cong j_Q\circ F$. Moreover, $\wt{F}$ is monoidal and preserves coproducts. Hence, $\wt{F}$ has a right adjoint $\wt{G}$. Since $\wt{F}\circ j_P \cong j_Q\circ F \dashv G\circ k_Q$ and $\wt{F}\circ j_P \dashv k_P \circ \wt{G}$, we infer that $G\circ k_Q\cong k_P\circ \wt{G}$. Our discussion so far is recorded in the following diagram:
\[
\begin{tikzcd}[cells={text width={width("$\scU/Q.$")},align=center},row sep=4em,column sep=4em]
\scT \rar["j_P",two heads,shift left=1ex] \rar[phantom,"\rotatebox{90}{$\vdash$}"] \dar["F"',shift right=1ex] \dar[phantom,"\dashv"] & \scT/P \lar["k_P",shift left=1ex,hook'] \dar["\wt{F}"',shift right=1ex] \dar[phantom,"\dashv"]
\\
\scU \rar["j_Q",two heads,shift left=1ex] \rar[phantom,"\rotatebox{90}{$\vdash$}"] \uar["G"',shift right=1ex] & \scU/Q. \lar["k_Q",shift left=1ex,hook'] \uar["\wt{G}"',shift right=1ex]
\end{tikzcd}
\]
Applying $j_P$ to both sides of $G\circ k_Q\cong k_P\circ \wt{G}$, we obtain the relation $\wt{G}\cong j_P\circ G\circ k_Q$. As a result, $j_P(G(f_Q))\cong j_P(G(k_Q(1))) \cong \wt{G}(1)\neq 0$. This reads $G(f_Q)\notin P$. In particular, $G(f_Q)\neq 0$. The next piece of information we need is that the morphism adjoint to the right idempotent $1 \to f_Q$, i.e., $1 \xr{\eta} G(1) \to G(f_Q)$, where $\eta$ is the unit of adjunction, is a weak ring. Tensoring this composite with $G(f_Q)$ results in a split monic $G(f_Q) \to G(1)\ot G(f_Q) \to G(f_Q)\ot G(f_Q)$. It follows that $G(f_Q) \to G(1)\ot G(f_Q)$ is split monic. So, $G(1)\ot G(f_Q)\neq 0$ since it admits the non-zero object $G(f_Q)$ as a summand. Finally, suppose that $G(1)\in P$. Then $G(1)\in F^{-1}(Q)$. This implies that $FG(1)\ot f_Q=0$. By the projection formula, $G(1)\ot G(f_Q)=0$, which leads to a contradiction. We conclude that $G(1)\notin P$, i.e., $P\in \Supps(G(1))$.

Now assume that $\Ker G=0$. Claim: $\Ker \wt{G}=0$.  If $X\in \scU$ and $\wt{G}(j_Q(X))=0$, then $G(f_Q\ot X)=G(k_Q(j_Q(X)))=k_P(\wt{G}(j_Q(X)))=0$. Therefore, $f_Q\ot X=0$, which means that $X\in Q$, so $j_Q(X)=0$. This proves the claim, which implies that $\Ker (\wt{G}(1)\ot -)=\Ker \wt{G}\wt{F}=\Ker \wt{F}$. Now let $\{P\}=U_\scS\cap V_P$, so that $\gG_P=e_\scS\ot f_P$. Since $e_\scS\notin P$ and $P=\Spcsa{F}(Q)$, it follows that $F(e_\scS)\notin Q$. Thus, $\wt{F}(j_P(e_\scS))=j_Q(F(e_\scS))\neq 0$. In other words, $j_P(e_\scS)\notin \Ker \wt{F}$. As a result, $\gG_P\ot G(f_Q)=\gG_P\ot G(k_Q(1))=k_P(j_P(e_\scS)\ot \wt{G}(1))\neq 0$, with the second equality by using the relation $j_P(\gG_P)=j_P(e_\scS)$ and the projection formula for $j_P\dashv k_P$. Tensoring the split monic $G(f_Q) \to G(1)\ot G(f_Q)$ with $\gG_P$, we conclude that $\gG_P\ot G(1)\neq 0$, so $P\in \supps(G(1))$.
\end{proof}

\begin{cor}
\label{cor:maxspcs-int-image-equals-maxspcs-int-supps}%
Let $F\colon \scT \to \scU$ be a coproduct-preserving tt-functor between big tt-categories with right adjoint $G$. Then $\mathsf{MaxSpc}^\rms(\scT)\cap \im \Spcsa{F}=\mathsf{MaxSpc}^\rms(\scT) \cap \Supps(G(1))$.
\end{cor}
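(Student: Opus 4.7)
The plan is to derive the claim directly from the two immediately preceding propositions, since each inclusion corresponds to one of them after intersecting with $\mathsf{MaxSpc}^\rms(\scT)$.

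For the inclusion $\mathsf{MaxSpc}^\rms(\scT)\cap \im \Spcsa{F} \subseteq \mathsf{MaxSpc}^\rms(\scT) \cap \Supps(G(1))$, I would invoke \Cref{prop:img-subseq-supps}, which asserts $\im \Spcsa{F} \subseteq \Supps(G(1))$. Intersecting both sides with $\mathsf{MaxSpc}^\rms(\scT)$ yields the required containment.

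For the reverse inclusion $\mathsf{MaxSpc}^\rms(\scT) \cap \Supps(G(1)) \subseteq \mathsf{MaxSpc}^\rms(\scT)\cap \im \Spcsa{F}$, I would apply \Cref{prop:maxspcs-supp(G(1))}, which gives $\mathsf{MaxSpc}^\rms(\scT) \cap \Supps(G(1)) \subseteq \im \Spcsa{F}$; intersecting both sides with $\mathsf{MaxSpc}^\rms(\scT)$ is automatic on the left and yields the desired containment on the right.

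Since both sides are set-theoretic intersections and the two propositions provide the matching inclusions, there is no real obstacle; the content of the corollary lies entirely in \Cref{prop:maxspcs-supp(G(1))} and \Cref{prop:img-subseq-supps}, not in this combination. One small bookkeeping point worth flagging is that \Cref{prop:img-subseq-supps} gives the (slightly) sharper statement $\im \Spcsa{F}\subseteq \supps(G(1))$ when $G$ is conservative and $\Spcs$ is $T_D$, but only the weaker form with $\Supps$ is needed here, so no additional hypotheses beyond those of the two propositions are required.
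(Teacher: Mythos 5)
Your proof is correct and is exactly the argument the paper leaves implicit: one inclusion follows from \Cref{prop:img-subseq-supps} by intersecting with $\mathsf{MaxSpc}^\rms(\scT)$, and the other is literally \Cref{prop:maxspcs-supp(G(1))}. Nothing more is needed.
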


\begin{rem}
\label{rem:img-not-equal-Supps}%
The inclusion $\im \Spcsa{F}\subseteq \Supps(G(1))$ in~\Cref{prop:img-subseq-supps} is not an equality in general. For instance, let $P\in \Spcs$ and $\scS\in \sfS(\scT)\setminus \{\scT\}$ such that $P\subsetneq \scS$. Then $\supps(f_\scS)=V_\scS\neq \Supps(f_\scS)$, since the former does not contain $P$ ($\gG_P\ot f_\scS = f_\scS \ot e_\scS \ot f_P=0$) while the latter does ($f_\scS\notin P$). Let $j_\scS\colon \scT \to \scT/\scS$ be the quotient functor and $k_\scS$ its right adjoint. Then $\im \Spcsa{j_\scS}=V_\scS=\supps(f_\scS)=\supps(k_\scS(1))\neq \Supps(k_\scS(1))=\Supps(f_\scS)$. A more concrete incarnation: Consider the derived category of a rank $1$ non-noetherian valuation domain $(A,\mfm)$, e.g., the perfection of $\mathbb{F}_p[[x]]$, with field of fractions $Q$ and let $P=0$ and $\scS=\mathsf{loc}(Q/\mfm)$. In this case, $\supps(f_\scS)=\{\sloc(\mfm),\sfD_{\{\mfm\}}(A)\}$ and $\Supps(f_\scS)=\{0,\sloc(\mfm),\sfD_{\{\mfm\}}(A)\}=\mathsf{Spc}^\rms(\sfD(A))$; see~\cite[Section 7]{BalchinStevenson21}.
\end{rem}

\subsection{Stratification and descent}
The results that follow are inspired by the article~\cite{ShaulWilliamson21}, in which appear descent theorems about tt-functors between $R$-linear big tt-categories within the context of stratification in the sense of~\cite{BensonIyengarKrause11a} (with subsequent applications in the theory of $DG$-rings). Contrasted with our setup, there are two vital differences. First, the local-to-global principle is a property that holds automatically in their setting. Second, the categories involved have the same spectrum, namely $\Spec(R)$, by assumption. We step closer to the spirit of the alluded configuration by requiring the induced map on smashing spectra to be a homeomorphism.

Let $F\colon \scT\to \scU$ be a coproduct-preserving tt-functor between big tt-categories whose smashing spectra are $T_D$ and assume that $f\coloneqq\Spcsa{F}\colon \Spcsa{\scU}\to \Spcs$ is a homeomorphism. By Stone duality, the map $\sfS(\scT)\to \sfS(\scU)$ that carries a smashing ideal $\scS$ to $\loc(F\scS)$ is a lattice isomorphism. It follows that $f^{-1}(P)=\loc(FP)\in \Spcsa{\scU},\ \forall P\in \Spcs$. Therefore, if $\{P\}=U_\scS\cap V_P$, then $\{f^{-1}(P)\}=U_{\loc(F\scS)}\cap V_{\loc(FP)}$. Hence, the Rickard idempotent corresponding to $\loc(FP)$ is $F(e_\scS)\ot F(f_P)=F(\gG_P)$. Since every smashing-prime of $\scU$ is realized as $\loc(FP)$, for a unique $P\in \Spcs$, we see that the Rickard idempotents of $\scU$ are precisely the images, under $F$, of the Rickard idempotents of $\scT$.
\begin{lem}
\label{lem:auxilliary}%
Let $H\colon \scC_1 \to \scC_2$ be a coproduct-preserving triangulated functor (e.g., $H=X\ot - \colon \scT \to \scT$ for a big tt-category $\scT$, or $H$ could be the right adjoint of a coproduct-preserving tt-functor between big tt-categories). Let $A$ be an object of $\scC_1$. Then, for all $B\in \sloc(A)$, it holds that $H(B)\in \sloc(H(A))$.
\end{lem}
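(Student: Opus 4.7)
The plan is to use the standard ``test subcategory'' argument: exhibit a localizing subcategory of $\scC_1$ that contains $A$ and has the desired image property, and then invoke minimality of $\sloc(A)$.

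Concretely, I would define
\[
\scX \coloneqq \set{B \in \scC_1}{H(B) \in \sloc(H(A))}
\]
and verify that $\scX$ is a localizing subcategory of $\scC_1$ containing $A$. The containment $A \in \scX$ is tautological, since $H(A) \in \sloc(H(A))$. For the localizing property, one checks the closure conditions pointwise, transporting them through $H$:
\begin{enumerate}[\rm(a)]
\item $0 \in \scX$, as $H(0) = 0 \in \sloc(H(A))$.
\item $\scX$ is closed under suspension and desuspension: $H$ is triangulated, so $H(\Sigma^{\pm 1} B) \cong \Sigma^{\pm 1} H(B)$, and $\sloc(H(A))$ is stable under $\Sigma^{\pm 1}$.
\item $\scX$ is closed under extensions (and, more generally, satisfies the 2-of-3 property for triangles): for a triangle $B_1 \to B_2 \to B_3 \to \Sigma B_1$ in $\scC_1$ with two vertices in $\scX$, applying $H$ produces a triangle in $\scC_2$ with two vertices in $\sloc(H(A))$, hence the third lies there too.
\item $\scX$ is closed under arbitrary coproducts: since $H$ preserves coproducts, $H(\coprod_i B_i) \cong \coprod_i H(B_i)$, and $\sloc(H(A))$ is closed under coproducts in $\scC_2$.
\end{enumerate}

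By minimality of $\sloc(A)$ as the smallest localizing subcategory of $\scC_1$ containing $A$, we conclude $\sloc(A) \subseteq \scX$, which is exactly the claim: $H(B) \in \sloc(H(A))$ for every $B \in \sloc(A)$.

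There is no real obstacle here; this is a routine verification of closure properties, and the argument is identical in spirit to the proof of~\Cref{lem:image-of-ideal} (only simpler, since no tensor action is involved and we need not verify an ideal condition). The only thing worth noting is that the argument applies verbatim to both of the motivating instances listed in the statement: for $H = X \ot -$ on a big tt-category, coproduct-preservation is built into the definition of a big tt-category, and for $H = G$ the right adjoint of a coproduct-preserving tt-functor $F$, the functor $G$ is triangulated and preserves coproducts as well, so both hypotheses are verified and the lemma applies.
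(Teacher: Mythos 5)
Your proof is correct and is exactly the argument the paper gives: the paper's proof of this lemma reduces to the proof of Lemma~\ref{lem:image-of-ideal}, which is precisely the ``test subcategory'' argument you spell out (define $\scX=\set{B\in\scC_1}{H(B)\in\sloc(H(A))}$, check it is localizing and contains $A$, conclude $\sloc(A)\subseteq\scX$). The only difference is that you verify the closure conditions explicitly where the paper leaves them implicit.
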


\begin{proof}
Identical to the proof of~\Cref{lem:image-of-ideal}; replace ``localizing ideal" with ``localizing subcategory".
\end{proof}

\begin{lem}
\label{lem:locAX}%
Let $A$ be an object of $\scT$ such that $\mathsf{loc}(A)=\scT$. Then $\mathsf{loc}(A\ot X)$ is a tensor-ideal and $\loc(X)=\mathsf{loc}(A\ot X)$, for all $X\in \scT$.
\end{lem}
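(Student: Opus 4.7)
The plan is to exploit the hypothesis $\sloc(A)=\scT$ together with \Cref{lem:auxilliary} twice: once with $H=-\otimes X$ to move from the generator $A$ to the generator $A\otimes X$, and once with $H=Y\otimes-$ to establish ideal-closure of the resulting localizing subcategory.

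First, I would establish the key auxiliary fact: for every $Y\in\scT$, one has $Y\otimes X\in\sloc(A\otimes X)$. Indeed, $Y\in\scT=\sloc(A)$, so applying \Cref{lem:auxilliary} to the coproduct-preserving triangulated functor $H=-\otimes X\colon\scT\to\scT$ yields $Y\otimes X\in\sloc(A\otimes X)$. Taking $Y=1$ gives in particular $X\in\sloc(A\otimes X)$.

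Next, I would verify that $\sloc(A\otimes X)$ is a tensor-ideal. Fix $Y\in\scT$ and $Z\in\sloc(A\otimes X)$. Applying \Cref{lem:auxilliary} to $H=Y\otimes-$ and to $Z\in\sloc(A\otimes X)$, we obtain $Y\otimes Z\in\sloc(Y\otimes A\otimes X)$. By the previous paragraph applied to the object $Y\otimes A\in\scT$, we have $(Y\otimes A)\otimes X\in\sloc(A\otimes X)$, so $\sloc(Y\otimes A\otimes X)\subseteq\sloc(A\otimes X)$, and thus $Y\otimes Z\in\sloc(A\otimes X)$.

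Finally, to conclude $\loc(X)=\sloc(A\otimes X)$: the inclusion $\sloc(A\otimes X)\subseteq\loc(X)$ is immediate, since $\loc(X)$ is a localizing ideal containing $A\otimes X$. Conversely, $\sloc(A\otimes X)$ is a localizing ideal (by the previous paragraph) that contains $X$ (by the first paragraph), and hence contains $\loc(X)$. There is no real obstacle; the whole argument is a bootstrap from \Cref{lem:auxilliary} and the assumption $\sloc(A)=\scT$, the only point requiring a moment's care being the order in which the two applications of \Cref{lem:auxilliary} are chained.
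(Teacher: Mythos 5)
Your proof is correct and follows essentially the same route as the paper's: both proofs bootstrap from \Cref{lem:auxilliary} and the hypothesis $\sloc(A)=\scT$, applying the auxiliary lemma once with $H=-\ot X$ and once with a tensoring functor to obtain ideal-closure. The only difference is presentational: where you apply \Cref{lem:auxilliary} with $H=Y\ot-$ for arbitrary $Y$ and chain it with your auxiliary fact for $Y\ot A$, the paper fixes $Y=A$ and then bootstraps from $A$ to all of $\scT$ via the localizing subcategory $\scY=\set{Y}{Y\ot Z\in\sloc(A\ot X)}$; the two are interchangeable and rest on the same mechanism.
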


\begin{proof}
Let $Z\in \mathsf{loc}(A\ot X)$ and $\scY=\set{Y\in \scT}{Y\ot Z\in \mathsf{loc}(A\ot X)}$. Then $\scY$ is a localizing subcategory of $\scT$ and we claim that $A\in \scY$, i.e., $A\ot Z\in \mathsf{loc}(A\ot X)$. This is deduced by the following two series of implications, where we invoke~\Cref{lem:auxilliary} for the tensor product:
\begin{align*}
A\ot A\in \mathsf{loc}(A)&\Rightarrow A\ot A\ot X\in \mathsf{loc}(A\ot X)\Rightarrow \mathsf{loc}(A\ot A\ot X)\subseteq \mathsf{loc}(A\ot X),
\\
Z\in \mathsf{loc}(A\ot X)&\Rightarrow A\ot Z\in \mathsf{loc}(A\ot A\ot X) \Rightarrow A\ot Z \in \mathsf{loc}(A\ot X).
\end{align*}
This shows that $\scY=\scT$, proving the first part of the statement. For the second part, since $\mathsf{loc}(A\ot X)$ is a tensor-ideal, $\mathsf{loc}(A\ot X)=\loc(A\ot X)\subseteq \loc(X)$. Finally, $1\in \loc(A)$ implies $X\in \loc(A\ot X)$. We infer that $\loc(X)=\mathsf{loc}(A\ot X)$.
\end{proof}

\begin{rem}
\Cref{lem:locAX} can be generalized: If $\mcA$ is a set of objects of $\scT$ such that $\sloc(\mcA)=\scT$, then $\sloc\parens{A\ot X}{A\in \mcA}\in \Loc(\scT)$ and $\sloc(X)=\sloc\parens{A\ot X}{A\in \mcA}$, for all $X\in \scT$.
\end{rem}

In the proof of the following theorem,~\Cref{lem:image-of-ideal} and~\Cref{lem:locAX} will be used without explicit reference.
\begin{thm}
\label{thm:strat-descent}%
Let $F\colon \scT \to \scU$ be a coproduct-preserving tt-functor between big tt-categories whose smashing spectra are $T_D$ and let $G$ be the right adjoint to $F$. Assume that $\Spcsa{F}\colon \Spcsa{\scU}\to \Spcsa{\scT}$ is a homeomorphism. Then:
\begin{enumerate}[\rm(a)]
\item
If $\scT$ satisfies the local-to-global principle, then $\scU$ satisfies the local-to-global principle.
\item
Suppose that there exists a collection of objects $\scX\subseteq \scU$ such that $\mathsf{loc}(\scX)=\scU$ and $\mathsf{loc}(G(\scX))=\scT$. Then: if $\scU$ satisfies minimality, then $\scT$ satisfies minimality.
\item
Suppose that $\mathsf{loc}(1_\scU)=\scU$ and $\mathsf{loc}(G(1_\scU))=\scT$. Then: if $\scU$ satisfies the local-to-global principle, then $\scT$ satisfies the local-to-global principle.
\end{enumerate}
\end{thm}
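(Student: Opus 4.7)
All three parts hinge on the observation, made just before the theorem, that the homeomorphism $\Spcsa{F}$ identifies the Rickard idempotents of $\scU$ with $\{F(\gG_P)\}_{P\in\Spcs}$, together with the projection formula $G(F(A)\ot B)\cong A\ot G(B)$ for the adjunction $F\dashv G$.

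For (a), I would apply the local-to-global principle in $\scT$ to $1_\scT$ to obtain $1_\scT\in \loc\paren{\gG_P}{P\in\Spcs}$, push forward along $F$ via \Cref{lem:image-of-ideal} to get $1_\scU\in \loc\paren{F(\gG_P)}{P\in\Spcs}$, and then tensor with an arbitrary $Y\in\scU$ (using the second case of \Cref{lem:image-of-ideal} applied to $-\ot Y$) to recover $Y\in \loc\paren{F(\gG_P)\ot Y}{P\in\Spcs}$, which is the local-to-global principle for $\scU$. For (c), let $A\in\scT$. The hypothesis $\sloc(1_\scU)=\scU$ combined with \Cref{lem:locAX} forces $\sloc$ and $\loc$ to coincide in $\scU$, so the local-to-global principle in $\scU$ applied to $F(A)$ reads $F(A)\in\sloc\paren{F(\gG_P\ot A)}{P\in\Spcs}$. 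Applying the coproduct-preserving functor $G$ and the projection formula produces $G(1_\scU)\ot A=G(F(A))\in \sloc\paren{G(1_\scU)\ot \gG_P\ot A}{P\in\Spcs}\subseteq \loc\paren{\gG_P\ot A}{P\in\Spcs}$. The hypothesis $\sloc(G(1_\scU))=\scT$ and \Cref{lem:locAX} then give $\sloc(G(1_\scU)\ot A)=\loc(A)$, so $A\in \loc(A)\subseteq \loc\paren{\gG_P\ot A}{P\in\Spcs}$.

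The core of the argument is (b). Fix $P\in\Spcs$ and a non-zero $Z\in \loc(\gG_P)$; the aim is $\gG_P\in \loc(Z)$. First I would show $F(Z)\neq 0$: otherwise the projection formula forces $G(X)\ot Z=G(F(Z)\ot X)=0$ for every $X\in\scU$, in particular for each $X\in\scX$, and the generalization of \Cref{lem:locAX} in the remark immediately following it, applied with $\mcA=G(\scX)$, collapses $\sloc(Z)$ to zero, contradicting $Z\neq 0$. Next, minimality in $\scU$ applied to the non-zero $F(Z)\in \loc(F(\gG_P))$ delivers $F(\gG_P)\in \loc(F(Z))$. Using $\sloc(\scX)=\scU$ and the same generalization in $\scU$, rewrite $\loc(F(Z))$ as the localizing subcategory $\sloc\parens{Y\ot F(Z)}{Y\in\scX}$; tensoring with $X\in\scX$ and then applying $G$ (together with the projection formula) converts this to $G(X)\ot \gG_P\in \sloc\parens{G(Y)\ot Z}{Y\in\scX}\subseteq \loc(Z)$ for every $X\in\scX$. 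Finally, the same generalization in $\scT$ with $\mcA=G(\scX)$ gives $\sloc(\gG_P)=\sloc\parens{G(X)\ot \gG_P}{X\in\scX}\subseteq \loc(Z)$, so $\gG_P\in \loc(Z)$ as desired.

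The main obstacle is the descent step in (b): the inclusion $F(\gG_P)\in \loc(F(Z))$ lives at the level of localizing tensor-ideals of $\scU$, and since $G$ is only lax-monoidal it does not automatically transport such an inclusion to $\scT$. The two generating hypotheses are used precisely to circumvent this: $\sloc(\scX)=\scU$ re-describes the $\scU$-tensor-ideal $\loc(F(Z))$ as the localizing subcategory $\sloc\parens{Y\ot F(Z)}{Y\in\scX}$ (which $G$ does preserve), and $\sloc(G(\scX))=\scT$ re-assembles the resulting localizing-subcategory inclusion in $\scT$ back into the tensor-ideal statement $\gG_P\in \loc(Z)$.
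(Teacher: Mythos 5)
Your proposal is correct and follows essentially the same route as the paper's own proof: use the identification of Rickard idempotents under $F$, the projection formula, and the generalized form of \Cref{lem:locAX} to pass $\sloc$-generating statements back and forth between $\scT$ and $\scU$; the paper phrases (b) and (c) as chains of equalities of localizing (sub)categories whereas you phrase them as memberships, but the content is the same. One small bookkeeping slip in (b): after tensoring $F(\gG_P)\in\sloc\parens{Y\ot F(Z)}{Y\in\scX}$ with $X$ and applying $G$ you land in $\sloc\parens{G(X\ot Y)\ot Z}{Y\in\scX}$, not $\sloc\parens{G(Y)\ot Z}{Y\in\scX}$; this does not matter since both are contained in $\loc(Z)$ and the conclusion $G(X)\ot\gG_P\in\loc(Z)$ still holds.
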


\begin{proof}
$\phantom{}$
\begin{enumerate}[\rm(a)]
\item
If $\scT$ satisfies the local-to-global principle, then $1_\scT\in \loc\paren{\gG_P}{P\in \Spcs}$. Thus, $1_\scU=F1_\scT\in \loc\paren{F(\gG_P)}{P\in \Spcs}=\loc\parens{\gG_Q}{Q\in \Spcsa{\scU}}$ and the conclusion follows.
\item
Let $X\in \scT$ and assume that $FX=0$. Since $\sloc(G(\scX))=\scT$, it holds that $\loc(X)=\sloc(G(\scX)\ot X)=\sloc(G(\scX \ot FX))=0$. Thus, $X=0$, proving that $F$ is conservative. Now consider a non-zero object $X\in \loc(\gG_P)$. Then the object $FX \in\loc(F(\gG_P))$ must also be non-zero. Therefore, $\mathsf{loc}(\scX \ot FX)=\loc(FX)=\loc(F(\gG_P))=\mathsf{loc}(\scX\ot F(\gG_P))$, with the second equality by minimality of $\scU$. As a result, $\loc(X)=\mathsf{loc}(G(\scX)\ot X))=\mathsf{loc}(G(\scX \ot FX))=\mathsf{loc}(G(\scX\ot F(\gG_P)))=\loc(\gG_P)$. Consequently, $\loc(\gG_P)$ is minimal.
\item
By assumption, $\scU$ satisfies the local-to-global principle and $\sloc(1_\scU)=\scU$. So, every localizing subcategory of $\scU$ is an ideal and $1_\scU\in \loc\parens{\gG_Q}{Q\in \Spcsa{\scU}}$. So, $G(1_\scU)\in \sloc\paren{GF(\gG_P)}{P\in \Spcs}=\sloc\paren{G(1_\scU)\ot \gG_P}{P\in \Spcs}=\loc\paren{\gG_P}{P\in \Spcs}$. Since $G(1_\scU)$ generates $\scT$, the proof is complete.\qedhere
\end{enumerate}
\end{proof}

\begin{cor}
\label{cor:descent-strat}%
Let $F\colon \scT \to \scU$ be a coproduct-preserving tt-functor between big tt-categories whose smashing spectra are $T_D$ and let $G$ be the right adjoint to $F$. Assume that $\Spcsa{F}\colon \Spcsa{\scU}\to \Spcsa{\scT}$ is a homeomorphism.~Provided that $\sloc(1_\scU)=\scU$ and $\sloc(G(1_\scU))=\scT$, if $\scU$ is stratified by the small smashing support, then $\scT$ is stratified by the small smashing support.
\end{cor}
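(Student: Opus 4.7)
The plan is to deduce this corollary directly from Theorem~\ref{thm:strat-descent} combined with the characterization of stratification given in Theorem~\ref{prop:strat-class}. Recall that, by~\Cref{prop:strat-class}, a big tt-category (with $T_D$ smashing spectrum) is stratified by the small smashing support if and only if it satisfies both the local-to-global principle and minimality. So the task reduces to verifying those two properties for $\scT$, given that they hold for $\scU$ and given the hypotheses $\sloc(1_\scU)=\scU$ and $\sloc(G(1_\scU))=\scT$.

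First I would apply part~(c) of~\Cref{thm:strat-descent}: its hypotheses $\sloc(1_\scU)=\scU$ and $\sloc(G(1_\scU))=\scT$ are exactly those assumed in the corollary, so since $\scU$ satisfies the local-to-global principle (as part of being stratified), we conclude that $\scT$ satisfies the local-to-global principle. Next I would apply part~(b) of~\Cref{thm:strat-descent} with the specific choice $\scX=\{1_\scU\}$; then $\sloc(\scX)=\sloc(1_\scU)=\scU$ and $\sloc(G(\scX))=\sloc(G(1_\scU))=\scT$ by hypothesis, so since $\scU$ satisfies minimality, we conclude that $\scT$ satisfies minimality.

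Having established both properties, one more invocation of~\Cref{prop:strat-class} gives that $\scT$ is stratified by the small smashing support, completing the proof. There is no real obstacle here: the corollary is essentially a repackaging of~\Cref{thm:strat-descent} in the language of stratification, with $\scX=\{1_\scU\}$ chosen so that the two sets of hypotheses in parts~(b) and~(c) collapse to the same pair $\sloc(1_\scU)=\scU$ and $\sloc(G(1_\scU))=\scT$. All the genuine work (projection formula, conservativity of $F$ under the generation hypothesis, transport of Rickard idempotents along $F$ via the homeomorphism $\Spcsa{F}$) has already been carried out inside~\Cref{thm:strat-descent}.
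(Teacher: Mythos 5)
Your proof is correct and matches the paper's intent: the corollary is stated without proof precisely because it follows immediately from Theorem~\ref{thm:strat-descent} parts (b) and (c), using $\scX=\{1_\scU\}$ to align the hypotheses, together with the characterization of stratification from Theorem~\ref{prop:strat-class}.
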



\begin{thebibliography}{BHS21a}
\bibitem[AJS04]{TarrioLopezSalorio04}
L. Alonso Tarr\'{\i}o, A. Jerem\'{\i}as L\'opez, and M. J. Souto Salorio.
\newblock Bousfield localization on formal schemes.
\newblock {\em J. Algebra}, 278(2):585--610, 2004.

\bibitem[AT62]{AullThron62}
C. E. Aull, and W. J. Thron.
\newblock Separation axioms between $T_0$ and $T_1$.
\newblock {\em Indag. Math.}, 24:26--37, 1962.

\bibitem[Bal05]{Balmer05}
Paul Balmer.
\newblock The spectrum of prime ideals in tensor triangulated categories.
\newblock {\em J. Reine Angew. Math.}, 588:149--168, 2005.

\bibitem[Bal20a]{Balmer20a}
Paul Balmer.
\newblock Nilpotence theorems via homological residue fields.
\newblock {\em Tunis. J. Math.}, 2(2):359--378, 2020.

\bibitem[Bal20b]{Balmer20b}
Paul Balmer.
\newblock Homological support of big objects in tensor-triangulated categories.
\newblock {\em J. \'{E}c. polytech. Math.}, 7:1069--1088, 2020.

\bibitem[BDS16]{BalmerDellambrogioSanders16}
Paul Balmer, Ivo Dell'Ambrogio, and Beren Sanders.
\newblock Grothendieck-Neeman duality and the Wirthm{\"u}ller isomorphism.
\newblock {\em Compositio Mathematica}, 152(8):1740--1776, 2016.

\bibitem[BF11]{BalmerFavi11}
Paul Balmer, and Giordano Favi.
\newblock Generalized tensor idempotents and the telescope conjecture.
\newblock {\em Proc. Lond. Math. Soc. (3)}, 102(6):1161--1185, 2011.

\bibitem[BHS21a]{BarthelHeardSanders21a}
Tobias Barthel, Drew Heard, and Beren Sanders.
\newblock Stratification in tensor triangular geometry with applications to spectral {M}ackey functors.
\newblock arXiv preprint arXiv:2106.15540, 2021.

\bibitem[BHS21b]{BarthelHeardSanders21b}
Tobias Barthel, Drew Heard, and Beren Sanders.
\newblock Stratification and the comparison between homological and tensor triangular support.
\newblock arXiv preprint arXiv:2106.16011, 2021.

\bibitem[BIK08]{BensonIyengarKrause08}
Dave J. Benson, Srikanth B. Iyengar, and Henning Krause.
\newblock Local cohomology and support for triangulated categories.
\newblock {\em Ann. Sci. \'Ec. Norm. Sup\'er. (4)}, 41(4):573--619, 2008. 

\bibitem[BIK11a]{BensonIyengarKrause11a}
Dave J. Benson, Srikanth B. Iyengar, and Henning Krause.
\newblock Stratifying triangulated categories.
\newblock {\em J. Topol.}, 4(3):641--666, 2011.

\bibitem[BIK11b]{BensonIyengarKrause11b}
Dave J. Benson, Srikanth B. Iyengar, and Henning Krause.
\newblock Stratifying modular representations of finite groups.
\newblock {\em Ann. of Math. (2)}, 174(3):1643--1684, 2011.

\bibitem[BKS20]{BalmerKrauseStevenson20}
Paul Balmer, Henning Krause, and Greg Stevenson.
\newblock The frame of smashing tensor-ideals.
\newblock {\em Math. Proc. Cambridge Philos. Soc.}, 168(2):323--343, 2020.

\bibitem[B\v{S}17]{BazzoniStovicek17}
Silvana Bazzoni, and Jan \v{S}t'ov\'{\i}\v{c}ek.
\newblock Smashing localizations of rings of weak global dimension at most one.
\newblock {\em Adv. Math.}, 305:351--401, 2017.

\bibitem[BS21]{BalchinStevenson21}
Scott Balchin, and Greg Stevenson.
\newblock Big categories, big spectra.
\newblock arXiv preprint arXiv:2109.11934, 2021.

\bibitem[Fox79]{Foxby79}
Hans-Bjørn Foxby.
\newblock Bounded complexes of flat modules.
\newblock {\em J. Pure Appl. Algebra (2)}, 15:149--172, 1979.

\bibitem[Jon82]{Johnstone82}
Peter T. Johnstone.
\newblock {S}tone spaces, volume 3 of {\em Cambridge Studies in Advanced Mathematics}.
\newblock Cambridge University Press, Cambridge, 1982.

\bibitem[Kel94]{Keller94}
Bernhard Keller.
\newblock A remark on the generalized smashing conjecture.
\newblock {\em Manuscripta Math.}, 84(2):192--198, 1994.

\bibitem[Kra00]{Krause00}
Henning Krause.
\newblock Smashing subcategories and the telescope conjecture--an algebraic approach.
\newblock {\em Invent. Math.}, 139(1):99--133, 2000.

\bibitem[Kra05]{Krause05}
Henning Krause.
\newblock Cohomological quotients and smashing localizations.
\newblock {\em Amer. J. Math.}, 127(6):1191--1246, 2005.

\bibitem[KS17]{KrauseStevenson17}
Henning Krause, and Greg Stevenson.
\newblock The derived category of the projective line.
\newblock arXiv preprint arXiv:1709.01717, 2017.

\bibitem[Mil92]{Miller92}
Haynes Miller.
\newblock Finite localizations.
\newblock {\em Bol. Soc. Mat. Mexicana (2)}, 37(1-2):383--389, 1992. Papers
in honor of Jos\'e Adem (Spanish).

\bibitem[Nee92]{Neeman92}
Amnon Neeman.
\newblock The chromatic tower for $D(R)$.
\newblock {\em Topology}, 31:519--532, 1992.

\bibitem[SW21]{ShaulWilliamson21}
Liran Shaul, and Jordan Williamson.
\newblock Lifting (co)stratifications between tensor triangulated categories.
\newblock arXiv preprint arXiv:2012.05190, 2021.

\bibitem[Ste13]{Stevenson13}
Greg Stevenson.
\newblock Support theory via actions of tensor triangulated categories.
\newblock {\em J. Reine Angew. Math.}, 681:219--254, 2013.

\bibitem[Ste14]{Stevenson14}
Greg Stevenson.
\newblock Derived categories of absolutely flat rings.
\newblock {\em Homology, Homotopy and Applications}, 16(2):45--64, 2014.

\end{thebibliography}
\end{document}